\newcommand{\nc}{\newcommand}
\newtheorem{thm}{Theorem}[section]
\newtheorem{corollary}[thm]{Corollary}
\newtheorem{definition}{Definition}[section]
\newtheorem{lem}[thm]{Lemma}
\newcommand{\beq}{\begin{equation}}
\newcommand{\eeq}{\end{equation}}
\newcommand{\bcl}{\begin{center}}
\newcommand{\ecl}{\end{center}}
\newcommand{\La}{\triangle}
\newcommand{\osc}{\operatornamewithlimits{osc}}
\newcommand{\D}{\nabla}
\newcommand{\supp}{\operatorname{supp}}
\newcommand{\R}{{\mathbb R}}
\newcommand{\Z}{{\mathbb Z}}
\newcommand{\N}{{\mathbb N}}
\newcommand{\vp}{\varphi}
\nc \sms {\smallskip}
\nc{\fp}{\noindent}
\nc{\qd}{\qquad\qquad}
\title[H\"older regularity and Uniqueness of the degenerate Keller-Segel system]{H\"older regularity and Uniqueness theorem on weak solutions to the degenerate Keller-Segel system}
\author[Sunghoon Kim]{Sunghoon Kim}
\address{Sunghoon Kim:
Department of Mathematics, School of Natural Sciences, The Catholic University of Korea,
43 Jibong-ro, Wonmi-gu, Bucheon-si, Gyeonggi-do, 420-743, Republic of Korea}
\email{math.s.kim@catholic.ac.kr}
\author[Ki-Ahm Lee]{Ki-Ahm Lee}
\address{Ki-Ahm Lee:
 School of Mathematical Sciences, Seoul National University,   Seoul 151-747, Republic of Korea \& Center for Mathematical Challenges, Korea Institute for Advanced Study, Seoul,130-722,  Republic of Korea
}
\email{kiahm@math.snu.ac.kr}
\keywords{}
\subjclass{Primary 35K55, 35K65}
\begin{document}

\maketitle
\begin{abstract}
In this paper, we present local H\"older estimates for the degenerate Keller-Segel system \eqref{eq-cases-aligned-main-problem-of-Keller-Segel-System} below in the range of $m>1$ and $q>1$ before a blow-up of solutions. To deal with difficulties caused by  the degeneracy of the operator, we find uniform estimates depending sup-norm of the density function and modified the energy estimates and intrinsic scales considered in Porous Medium Equation. As its application, the uniqueness of weak solution to \eqref{eq-cases-aligned-main-problem-of-Keller-Segel-System} is also showed in the class of  H\"older continuous functions by proving $L^1$-contraction in this class.
\end{abstract}

\setcounter{equation}{0}
\setcounter{section}{0}

\section{Introduction}\label{sec-intro}
\setcounter{equation}{0}
\setcounter{thm}{0}

We will investigate the regularity theory of solutions to degenerate parabolic equations and derive the uniqueness of such solutions. More precisely, we consider the following degenerate Keller-Segel system:
\begin{equation}\label{eq-cases-aligned-main-problem-of-Keller-Segel-System}
\begin{cases}
\begin{aligned}
&u_t=\nabla\cdot\left(\nabla u^m-u^{q-1}\nabla v\right), \qquad x\in\R^n,\quad t>0,\\
&\delta v_t=\La v-\gamma v+u \qquad \qquad \qquad  x\in\R^n,\quad t>0\\
&u(x,0)=u_0(x) \qquad \qquad \qquad x\in\R^n, 
\end{aligned}
\end{cases}
\tag{{\bf $KS_m$}}
\end{equation}
where $n\geq 2$ and $m\geq 1$, $q\geq 2$, $\gamma>0$, $\delta=1$ or $\delta=0$. The initial data $(u_0,v_0)$ is a non-negative function and in $\left(L^{1}\cap L^{\infty}\right)\times \left(L^1\cap H^1\cap W^{1,\infty}\right)$ with $u_0^{m}\in H^{1}(\R^n)$. The standard system with $m=1$, $q=2$ and $\delta=1$ was introduced by Keller and Segel \cite{KS} in 1970. They presented a mathematical formulation which is modelling aggregation process of amoebae by the chemotaxis on the simplest possible assumptions consistent with the known facts. Although they suggested the general system,  these types of systems become the most common formulations which are describing a part of cellular slime molds with the chemotaxis. Usually $u(x,t)$ stands for the cell density, $v(x,t)$ refers to as the chemotaxis concentration at place $x\in\R^n$, time $t>0$. We call \eqref{eq-cases-aligned-main-problem-of-Keller-Segel-System} the parabolic-elliptic Keller-Segel system and parabolic-parabolic Keller-Segel one when $\delta=0$ and $\delta=1$, respectively. Mathematical modelling for the Keller-Segel system with porous medium type diffusion can be found in \cite{SRLC} and nonlinear diffusion has been suggested by Hillen and Painter, \cite{HP}. \\
\indent In the parabolic-elliptic Keller-Segel system, it was shown by Sugiyama and Kunii \cite{SK} that for every non negative data $(u_0,v_0)\in \left(L^1\cap L^{\infty}\right)\times \left(L^1\cap H^1\cap W^{1,\infty}\right)$ with $u_0^m\in H^{1}(\R^n)$, there exists at least one weak solution of \eqref{eq-cases-aligned-main-problem-of-Keller-Segel-System} on $[0,\infty)$ when $m>q-\frac{2}{n}$. In the case of $1\leq m\leq q-\frac{2}{n}$, the global solution has been guaranteed when the initial data has a sufficiently small $L^{\frac{n(q-m)}{2}}$-norm.\\
\indent In parabolic-parabolic Keller-Segel system, it was also shown by Sugiyama and Kunii \cite{SK}, Ishida and Yokota \cite{IY1, IY2} that, for the same initial data  as parabolic-elliptic system, there exists at least one weak solution of \eqref{eq-cases-aligned-main-problem-of-Keller-Segel-System} on $[0,\infty)$ provided $m\geq 1$ and $m>q-\frac{2}{n}$. In the case of $1\leq m\leq q-\frac{2}{n}$, the local and global existence of weak solutions has been established for large and small initial data, respectively. It is still in progress to find solution of \eqref{eq-cases-aligned-main-problem-of-Keller-Segel-System} under more general conditions.  \\
\indent Large number of literatures on the regularity theory for the weak solutions of degenerate parabolic equations can be found. We refer the readers to the papers \cite{KL1}, \cite{K} for the Schauder and H\"older estimates of the porous medium equation in a bounded domain, respectively and to the paper \cite{KL2} for the H\"older estimates of the non-local version of fast diffusion equation.  We also refer to the papers \cite{KyLe1}, \cite{KyLe2} for the regularity theory of fully nonlinear integro-differential operators and the paper \cite{KKL} for the Harnack inequality of nondivergent parabolic operators on Riemannian manifolds.\\
\indent The ultimate goal in this article is to establish the regularity theory of the weak solutions of \eqref{eq-cases-aligned-main-problem-of-Keller-Segel-System} under some regularity conditions on the initial data $(u_0,v_0)$ and $(u,v)$. Especially, we want to investigate the local H\"older estimates of $u^m$. As an application, we will also show that the weak solutions of \eqref{eq-cases-aligned-main-problem-of-Keller-Segel-System} are unique in the class of H\"older continuous functions.\\
\indent Uniqueness results have been achieved under some regularity conditions. Sugiyama and Yahagi \cite{SY} introduced the space $L^s(0,T;L^p(\R^n))$ with some $s$ and $p$ for the uniqueness and continuity of weak solution with respect to the initial data. Based on the $L^1$-contraction principle, uniqueness was shown in \cite{SY} when the weak solution $u$ as well as $\partial_tu$ and $\nabla u^{q-1}$ has the additional regularity in $L^s(0,T;L^p(\R^n))$.\\
\indent Since the term $u^{q-1}\nabla v$ of \eqref{eq-cases-aligned-main-problem-of-Keller-Segel-System} can be thought as the perturbation of $\La u^m$, it seems to be reasonable to investigate uniqueness of solution of \eqref{eq-cases-aligned-main-problem-of-Keller-Segel-System} under the setting similar to the one for porous medium equation. On that point, Kagei, Kawakami and Sugiyama \cite{KKS} showed that weak solutions of \eqref{eq-cases-aligned-main-problem-of-Keller-Segel-System} with $\delta=0$ exist uniquely in spaces of H\"older continuous function in $x$ and $t$. Their work was extended to the parabolic-parabolic system by Miura and Sugiyama \cite{MS}. In \cite{MS}, they adapt the duality method (the existence result for the adjoint equation yields the uniqueness of solutions to the original equation) coupled with the vanishing viscosity duality method to achieve the uniqueness result.\\
\indent As pointed out above, several terms, for example $u_t$ and $\nabla u^{q-1}$, should have some regularity to obtain the $L^1$-contraction principle. However, at this time, we assume only $u, v\in L^{\infty}$. Hence, we need more steps to improve the regularity conditions of $u$ and $v$ to get the uniqueness of solutions.  On the other hand, since H\"older continuity is optimal regularity for the weak solution of the porous medium equation, our regularity result will be enough to investigate uniqueness of the solution of \eqref{eq-cases-aligned-main-problem-of-Keller-Segel-System} with assistance from the $L^1$ contraction principle. \\
\indent Throughout the paper, we are going to consider the weak solution of \eqref{eq-cases-aligned-main-problem-of-Keller-Segel-System}. The definition of weak solution is given as follow.

\begin{definition}
Let $T>0$. A pair $(u,v)$ of non-negative functions defined on $\R^n\times(0,T)$ is called a weak solution to \eqref{eq-cases-aligned-main-problem-of-Keller-Segel-System} on $[0,T)$ if
\begin{enumerate}
\item $u\in L^{\infty}(0,T;L^p(\R^n))$ $(\forall p\in[1,\infty])$, $u^m\in L^2\left(0,T;H^1(\R^n)\right)$,
\item $v\in L^{\infty}\left(0,T;H^1(\R^n)\right)$,
\item $(u,v)$ satisfies \eqref{eq-cases-aligned-main-problem-of-Keller-Segel-System} in the distribution sense, i.e., for every $\vp\in C^{\infty}_0(\R^n\times[0,T))$,
\begin{equation*}
\int_0^T\int_{\R}\left(\nabla u^m\cdot\nabla\vp-u^{q-1}\nabla v\cdot\nabla\vp-u\vp_t\right)\,dxdt=\int_{\R}u_0(x)\vp(x,0)\,dx,
\end{equation*}
\begin{equation*}
\int_0^T\int_{\R}\left(\nabla v\cdot\nabla\vp+\gamma v\vp-u\vp-\delta v\vp_t\right)\,dxdt=\int_{\R}v_0(x)\vp(x,0)\,dx.
\end{equation*}
\end{enumerate}
In particular, if $T>0$ can be taken arbitrary, then $(u,v)$ is called a global weak solution to \eqref{eq-cases-aligned-main-problem-of-Keller-Segel-System}.
\end{definition}

The paper is divided into three parts: In Part 1 (Section \ref{sec-important-estimates}), we intorduce some estimates which will be an important key for the result. In Part 2 (Section \ref{sec-holder-estimates}) we study the H\"older regularity of solution to the degenerate equation
\begin{equation*}
u_t=\nabla\left(\nabla u^m-u^{q-1}\nabla v\right).
\end{equation*}
It is simple observation that H\"older estimate of solution will be strongly effected by the coefficients and extra term $\nabla v$. Hence, proper conditions need to be imposed to $q$ and $\nabla v$ for the result. Part 3 (Section \ref{sec-uniqueness}) is devoted to the proof of $L^1$-contraction of solution to the degenerated Keller-Segel system. Based on estimates of Section 2 and 3, we will discuss the uniqueness of solution.

\section{Preliminary estimates}\label{sec-important-estimates}
\setcounter{equation}{0}
\setcounter{thm}{0}

In this section, we introduce several estimates that will be used repeatedly in the proof of H\"older regularity and in the uniqueness of solution. We first define the localized weight function $\psi$, which is introduced in \cite{Su}.
\begin{lem}
We define the localized weight function $\psi$ by
\begin{equation*}
\psi(r)=\begin{cases}
               \begin{array}{ll}
                   1 & \mbox{for $0\leq r<1$},\\
                   1-2(r-1)^2 & \mbox{for $1\leq r<\frac{3}{2}$},\\
                   2(2-r)^2 & \mbox{for $\frac{3}{2}\leq r<2$},\\
                   0 & \mbox{for $r\geq 2$}      
               \end{array}
            \end{cases}
\end{equation*}
and define $\psi_l(x):=\psi\left(\frac{|x|}{l}\right)$ for $x\in\R^n$ and $l=1,2,\cdots,.$ Then, there exist positive constants $c_1$ and $c_2$ depending only on $n$ such that
\begin{equation*}
\left|\nabla\psi_l(x)\right|\leq \frac{c_1}{l}\left(\psi_{l}(x)\right)^{\frac{1}{2}}, \qquad \left|\La\psi_l(x)\right|\leq \frac{c_2}{l^2} \qquad \mbox{for $x\in\R^n$}.
\end{equation*}
\end{lem}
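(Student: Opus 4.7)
The computation is radial, so I would reduce both inequalities to one-dimensional estimates on $\psi$ and then verify those by case analysis on the four pieces. Writing $r=|x|$ and $\hat{x}=x/|x|$, the chain rule gives
\begin{equation*}
\nabla\psi_l(x)=\frac{1}{l}\,\psi'(r/l)\,\hat{x},\qquad \La\psi_l(x)=\frac{1}{l^2}\,\psi''(r/l)+\frac{n-1}{r\,l}\,\psi'(r/l),
\end{equation*}
so it is enough to show (i) $|\psi'(r)|\le C_1\,\psi(r)^{1/2}$ pointwise, and (ii) $|\psi''(r)|\le C_2$ together with $|\psi'(r)|/r\le C_3$ almost everywhere on $[0,\infty)$, with absolute constants.

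Differentiating the four pieces yields $\psi'(r)=0$ on $[0,1)\cup[2,\infty)$, $\psi'(r)=-4(r-1)$ on $[1,3/2)$, and $\psi'(r)=-4(2-r)$ on $[3/2,2)$; comparing one-sided limits at $r=1,\,3/2,\,2$ confirms $\psi\in C^{1}([0,\infty))$ with $|\psi'|\le 2$. Since $\psi'$ vanishes on $[0,1)$ and $|\psi'|\le 2$ elsewhere, $|\psi'(r)|/r\le 2$ almost everywhere. The second derivative is piecewise constant with values $0,-4,+4,0$, so $|\psi''|\le 4$ almost everywhere; the jump at $r=3/2$ merely forces the Laplacian identity above to be read a.e., which is harmless for the intended applications.

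The one substantive point is the degenerate gradient bound (i), whose subquadratic behaviour is built into the profile near $r=2$. For $r\in[3/2,2)$ set $s=2-r\in(0,1/2]$: then $|\psi'(r)|^2=16 s^2$ and $\psi(r)=2 s^2$, so $|\psi'|^2/\psi\equiv 8$ and the bound is in fact sharp there. For $r\in[1,3/2)$ set $s=r-1\in[0,1/2)$: then $|\psi'|^2/\psi=16 s^2/(1-2 s^2)$, which is increasing in $s$ with supremum $8$ attained as $s\to 1/2^-$, matching the value from the neighbouring piece. Combining this with the trivial zero bound on $[0,1)\cup[2,\infty)$ gives $|\psi'(r)|\le 2\sqrt{2}\,\psi(r)^{1/2}$ throughout, so $c_1=2\sqrt{2}$, and inserting (i) and (ii) into the chain-rule identities yields the Laplacian bound with $c_2=2n+2$.

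There is really no obstacle beyond elementary calculus; the one conceptual point to notice is that the coefficients $-2$ and $2$ in the two quadratic pieces of $\psi$ are engineered precisely so that (a) $\psi'$ is continuous at $r=3/2$, and (b) the ratio $|\psi'|^2/\psi$ stays uniformly bounded across $[1,2)$ despite $\psi$ vanishing at $r=2$. Both inequalities are then immediate consequences of the two chain-rule identities displayed above.
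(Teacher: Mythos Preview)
Your argument is correct and complete. The paper itself does not supply a proof of this lemma; it is stated without proof, with attribution to \cite{Su}. Your direct radial computation --- reducing to the one-variable inequalities $|\psi'|\le 2\sqrt{2}\,\psi^{1/2}$, $|\psi''|\le 4$, and $|\psi'(r)|/r\le 2$, then verifying these piecewise --- is exactly the natural way to establish the claim, and your explicit constants $c_1=2\sqrt{2}$ and $c_2=2n+2$ are sharp for this profile. The remark that $\psi$ is only $C^{1,1}$ (so the Laplacian bound holds a.e.) is appropriate and, as you note, harmless for the integration-by-parts arguments in which $\psi_l$ is used later in the paper.
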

Next, we reviews the well-known representation from the elliptic theory. We refer the reader to the books \cite{St} by Stein and the paper \cite{AS} by Aronszajn and Smith.

\begin{lem}[cf. Lemma 3 in \cite{SY}]
Let $n\geq 2$, $1\leq p\leq\infty$ and $f\in L^p(\R^n)$ and consider the problem
\begin{equation}\label{eq-parabolic-elliptic-condition-for-regularity-of-v-1}
-\La z+\gamma z=f \qquad \forall x\in\R^n
\end{equation}
Then, the function $z(x)\in L^p(\R^n)$ given by
\begin{equation*}
z(x)=\int G(x-y)f(y)\,dy
\end{equation*}
is the strong solution of \eqref{eq-parabolic-elliptic-condition-for-regularity-of-v-1} in $\R^n$. Here, $G(x)$ is the Bessel potential which can be expressed as
\begin{equation*}
G(x)=\gamma^{\frac{N}{2}-1}\cdot a_n\cdot e^{-\sqrt{\gamma}|x|}\int_0^{\infty}e^{-\sqrt{\gamma}|x|s}\left(s+\frac{s^2}{2}\right)^{\frac{n-3}{2}}\,ds
\end{equation*}
with the constant $a_n$ given by
\begin{equation*}
a_n=\left(2\left(2\pi\right)^{\frac{N-1}{2}}\Gamma\left(\frac{N-1}{2}\right)\right)^{-1}
\end{equation*}
Hence, for $f\in L^{\infty}(\R^n)$, it holds that $z$, $\nabla z\in L^{\infty}(\R^n)$ with the estimates
\begin{equation*}
\left\|z\right\|_{L^{\infty}}\leq \left\|G\right\|_{L^1}\left\|f\right\|_{L^{\infty}}, \qquad \left\|\nabla z\right\|_{L^{\infty}}\leq \left\|\nabla G\right\|_{L^1}\left\|f\right\|_{L^{\infty}}.
\end{equation*}
For $f\in L^{1}(\R^n)$, it holds that $\nabla z\in L^{p}(\R^n)$ for all $1\leq p\leq \frac{n}{n-1}$ with the estimates
\begin{equation*}
\left\|\nabla z\right\|_{L^{p}}\leq \left\|\nabla G\right\|_{L^p}\left\|f\right\|_{L^{\infty}}.
\end{equation*}
\end{lem}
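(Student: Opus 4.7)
The plan is to derive the representation via Fourier analysis, then read off pointwise bounds on the resulting Bessel kernel, and finally obtain every norm estimate from Young's convolution inequality. First, I would take Fourier transforms in \eqref{eq-parabolic-elliptic-condition-for-regularity-of-v-1} to get $(|\xi|^2+\gamma)\hat z=\hat f$, so $z=G\star f$ with $\hat G(\xi)=(|\xi|^2+\gamma)^{-1}$. Using the heat-semigroup identity $(|\xi|^2+\gamma)^{-1}=\int_0^{\infty}e^{-t(|\xi|^2+\gamma)}\,dt$ and the Gaussian inverse Fourier transform, I obtain the heat-kernel representation
\[
G(x)=\int_0^{\infty}e^{-\gamma t}\,(4\pi t)^{-n/2}\,e^{-|x|^2/(4t)}\,dt.
\]
A substitution of the type $t=|x|\tau/(2\sqrt\gamma)$ converts the exponent into $\sqrt\gamma|x|(\tau+1/\tau)/2$, and the shift $\tau\mapsto 1+s$ (which factors out the saddle point at $\tau=1$ and produces the prefactor $e^{-\sqrt\gamma|x|}$) combined with the standard integral representation of the modified Bessel function $K_{(n-2)/2}$ produces exactly the closed form stated in the lemma, with the constant $a_n$ arising from the Jacobian together with the value of $\Gamma\bigl((n-1)/2\bigr)$ and the scaling factor $\gamma^{n/2-1}$.

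Next I would extract the pointwise and integrability behavior of $G$ and $\nabla G$. From either representation one reads off that $G$ is smooth away from the origin, positive, decays like $e^{-\sqrt\gamma|x|}$ at infinity, and behaves near the origin like the Newtonian kernel $|x|^{2-n}$ (logarithmic when $n=2$), so $|\nabla G(x)|\lesssim|x|^{1-n}$ near $0$ with exponential decay at infinity. Consequently $G\in L^1(\R^n)$, $\nabla G\in L^1(\R^n)$, and $\nabla G\in L^p(\R^n)$ for $1\le p<n/(n-1)$ by a polar-coordinates check. With these kernel norms in hand, Young's convolution inequality applied to $z=G\star f$ and $\nabla z=(\nabla G)\star f$ (differentiation under the integral sign is justified by the integrability of $\nabla G$) produces the three estimates claimed. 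That $z$ actually solves \eqref{eq-parabolic-elliptic-condition-for-regularity-of-v-1} in the strong sense follows from $(-\La+\gamma)G=\delta_0$, which is verifiable by differentiating the heat-kernel integral, together with the standard Calder\'on-Zygmund regularity that upgrades the distributional identity $z=G\star f$ to a strong solution when $f\in L^p$.

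The main technical obstacle is the endpoint $p=n/(n-1)$ for $\nabla G$: the crude bound $|\nabla G(x)|\lesssim|x|^{1-n}$ only places $\nabla G$ in weak-$L^{n/(n-1)}$ and just misses $L^{n/(n-1)}$ logarithmically at the origin. To reach that endpoint one must either appeal to Stein's sharp Bessel-potential asymptotics (which show enough cancellation in the radial derivative for $\nabla G$ actually to lie in $L^{n/(n-1)}$) or replace Young on this single endpoint by a Hardy-Littlewood-Sobolev-type argument. I also read the displayed $\left\|f\right\|_{L^{\infty}}$ appearing in the final estimate of the lemma as a typographical slip for $\left\|f\right\|_{L^{1}}$, since the convolution $\nabla G\star f$ with $f\in L^1$ and $\nabla G\in L^p$ lands in $L^p$ by Young with the $L^1$-norm of $f$ on the right, and the $L^\infty$-norm of $f$ never enters. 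Apart from this endpoint subtlety, the remaining work is routine manipulation of explicit integrals.
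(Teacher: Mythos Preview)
The paper does not supply its own proof of this lemma: it is stated as a known result, prefaced by ``we review the well-known representation from the elliptic theory'' and attributed to Stein \cite{St}, Aronszajn--Smith \cite{AS}, and Lemma~3 of \cite{SY}. There is therefore no in-paper argument to compare your proposal against.

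Your approach --- Fourier transform to identify $z=G\star f$ with $\hat G(\xi)=(|\xi|^2+\gamma)^{-1}$, the heat-semigroup integral for $G$, reduction to the modified Bessel function $K_{(n-2)/2}$, and Young's convolution inequality for the norm bounds --- is the standard derivation found in the cited references and is correct in substance. Your remark that the final displayed estimate should carry $\|f\|_{L^1}$ rather than $\|f\|_{L^\infty}$ is accurate: with $f\in L^1$ and $\nabla G\in L^p$, Young gives $\|\nabla z\|_{L^p}\le\|\nabla G\|_{L^p}\|f\|_{L^1}$, and $\|f\|_{L^\infty}$ plays no role there. Your caution about the endpoint $p=n/(n-1)$ is also well placed: the crude bound $|\nabla G(x)|\lesssim|x|^{1-n}$ near the origin yields only $\nabla G\in L^p$ for $p<n/(n-1)$ (and weak-$L^{n/(n-1)}$), so the closed endpoint as written in the lemma either requires the sharper Bessel-kernel asymptotics or should be read as a strict inequality. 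Since the paper merely quotes the result, this subtlety does not affect anything downstream.
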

By the semigroup theory with $L^p-L^q$ estimates for the heat semigroup, we obtain the following fundamental estimates of solution to the Cauchy problem for inhomogeneous linear heat equations which will play an important role in establishing the a priori estimates of solution $v$ in \eqref{eq-cases-aligned-main-problem-of-Keller-Segel-System}.
\begin{lem}[cf. Lemma 5 in \cite{SK}]\label{lem-Lemma-5-in-SK}
Let $n\in\N$, $T>0$, $\delta=1$, $1\leq p\leq \infty$ and $z_0\in L^p(\R^n)$. If $f\in L^1(0,T);L^p(\R^n)$, then 
\begin{equation*}\label{eq-cases-aligned-inhomogeneous-linear-heat-eq-for-L-infty}
\begin{cases}
\begin{aligned}
&z_t=\La z-\gamma z+f, \qquad x\in\R^n,\,\,t>0,\\
&z(x,0)=z_0(x), \qquad x\in\R^n
\end{aligned}
\end{cases}
\end{equation*} 
has a unique mild solution $z\in C\left([0,T];L^p(\R^n)\right)$ given by
\begin{equation*}
z(x,t)=e^{-t}e^{t\La}z_0+\int_0^te^{-(t-s)}e^{(t-s)\La}f(x,s)\,ds, \qquad t\in[0,T]
\end{equation*}
where $(e^{t\La}f)(x,t)=\left(4\pi t\right)^{-\frac{N}{2}}\int_{\R^n}e^{-\frac{|x-y|^2}{4t}}f(y,t)\,dy$.\\
Moreover, let $1\leq p'\leq p\leq \infty$, $\frac{1}{p'}-\frac{1}{p}<\frac{1}{n}$ and suppose that $z$ is the solution of
\begin{equation*}
\begin{cases}
\begin{aligned}
&\delta z_t=\La z-\gamma z+f, \qquad x\in\R^n,\,\,t>0,\\
&z(x,0)=z_0(x), \qquad x\in\R^n
\end{aligned}
\end{cases}
\end{equation*} 
where $z_0\in W^{1,p}(\R^n)$. If $f\in L^{\infty}(0,\infty;L^{p'}(\R^n))$, then there exists a constant $C>0$ such that
\begin{equation*}
\left\|z(t)\right\|_{L^p(\R^n)}\leq \left\|z_0\right\|_{L^p(\R^n)}+C\Gamma(a)\sup_{0<s<t}\left\|f(s)\right\|_{L^{p'}(\R^n)}
\end{equation*}
and
\begin{equation}\label{eq-L-p-estimate-of-v-by-u-2}
\left\|\nabla z(t)\right\|_{L^p(\R^n)}\leq \left\|\nabla z_0\right\|_{L^p(\R^n)}+C\Gamma(\tilde{a})\sup_{0<s<t}\left\|f(s)\right\|_{L^{p'}(\R^n)}.
\end{equation}
for $t\in[0,\infty)$, where $C$ is a positive constant independent of $p$, $\Gamma(\cdot)$ is the gamma function, and $a=1-\left(\frac{1}{p'}-\frac{1}{p}\right)\cdot\frac{N}{2}$, $\tilde{a}=\frac{1}{2}-\left(\frac{1}{p'}-\frac{1}{p}\right)\cdot\frac{N}{2}$.\\
\indent In addition, let $\left|\nabla^iz_0\right|\in L^p(\R^n)$, and $f\in L^2(0,T;W^{i-1,p}(\R^n))$ for $i=1,2,3.$ Then, it holds that
\begin{equation*}
\left\|\nabla^{i}z(t)\right\|_{L^p(\R^n)}^2\leq \left\|\nabla^iz_0\right\|_{L^p(\R^n)}^2+2(p+N-2)\int_{0}^{t}\left\|\nabla^{i-1}f(s)\right\|_{L^{p}(\R^n)}^2\,ds\qquad \mbox{for $t\in[0,\infty)$}.
\end{equation*}
\end{lem}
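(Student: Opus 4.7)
The plan is to prove the three assertions of the lemma separately, using standard heat-semigroup theory for the first two parts and an $L^p$ energy estimate for the third. For the well-posedness of the mild solution $z\in C([0,T];L^p(\R^n))$, I would realise $-\La+\gamma I$ as the generator of the analytic contraction semigroup $S(t)=e^{-t}e^{t\La}$ on $L^p(\R^n)$ (using $L^p$-contractivity of the Gaussian heat semigroup) and verify directly that the Duhamel formula stated in the lemma defines the unique such mild solution; continuity in time follows by dominated convergence from the hypothesis $f\in L^1(0,T;L^p)$.

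For the $L^p$-$L^{p'}$ smoothing bounds, I would substitute the classical heat-kernel estimates
\[
\|e^{t\La}g\|_{L^p(\R^n)}\leq (4\pi t)^{-\frac{N}{2}\left(\frac{1}{p'}-\frac{1}{p}\right)}\|g\|_{L^{p'}(\R^n)},\qquad \|\nabla e^{t\La}g\|_{L^p(\R^n)}\leq C\,t^{-\frac{1}{2}-\frac{N}{2}\left(\frac{1}{p'}-\frac{1}{p}\right)}\|g\|_{L^{p'}(\R^n)}
\]
into the Duhamel representation, handling the homogeneous gradient term via $\nabla e^{t\La}z_0=e^{t\La}\nabla z_0$ and factoring $\sup_{0<s<t}\|f(s)\|_{L^{p'}}$ out of the inhomogeneous integral. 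The remaining time integral then has the form $\int_0^t e^{-(t-s)}(t-s)^{a-1}\,ds\leq \Gamma(a)$ (respectively with exponent $\tilde{a}$ for the gradient), which is finite precisely because the hypothesis $\frac{1}{p'}-\frac{1}{p}<\frac{1}{n}$ forces $a,\tilde{a}>0$.

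For the Sobolev bound with constant $2(p+N-2)$, my plan is to differentiate so that $w:=\nabla^{i-1}z$ solves $\delta w_t=\La w-\gamma w+\nabla^{i-1}f$, apply the Bochner-type identity $\La|\nabla w|^2=2|\nabla^2 w|^2+2\nabla w\cdot\nabla\La w$ to obtain a pointwise equation for $u:=|\nabla w|^2$, and then perform an $L^{p/2}$ energy computation for $u$, exploiting $\|u\|_{L^{p/2}}=\|\nabla w\|_{L^p}^2$. After integrating by parts and using Young's inequality together with the Kato-type inequality $|\nabla^2 w|^2\geq \frac{1}{N}(\La w)^2$ to absorb the chain-rule cross term $(p/2-1)\int u^{p/2-2}|\nabla u|^2$ into the dissipation produced by $|\nabla^2 w|^2$, one obtains the differential inequality $\frac{d}{dt}\|\nabla^i z\|_{L^p}^2\leq 2(p+N-2)\|\nabla^{i-1}f\|_{L^p}^2$, whose integration in time yields the claim.

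The main technical obstacle is this last step, where the exact constant $2(p+N-2)$ must be tracked: the factor $p$ enters through the $L^{p/2}$ exponent and the chain-rule derivative of $|\nabla w|^{p-2}$, while the dimensional factor $N$ enters through the Kato inequality used to compare $|\nabla^2 w|^2$ with $(\La w)^2$ when bounding the source contribution. The first two parts are otherwise routine heat-semigroup calculus, with the gamma function appearing as the Euler integral $\int_0^\infty e^{-\tau}\tau^{a-1}\,d\tau$.
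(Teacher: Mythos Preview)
The paper does not supply its own proof of this lemma: it is recorded in Section~\ref{sec-important-estimates} as a preliminary fact, attributed to \cite{SK} (``cf.\ Lemma~5 in \cite{SK}''), and no argument is given. There is therefore nothing in the present paper against which to compare your sketch.

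On its own merits, your plan for the first two assertions is the standard one and is correct: the $L^p$-contractivity of the heat semigroup gives the mild solution in $C([0,T];L^p)$ via Duhamel, and inserting the $L^{p'}\to L^p$ smoothing rates into the Duhamel integral produces exactly $\int_0^t e^{-\gamma(t-s)}(t-s)^{a-1}\,ds\leq C\Gamma(a)$ (and similarly $\Gamma(\tilde a)$), with the condition $\frac{1}{p'}-\frac{1}{p}<\frac{1}{n}$ ensuring $\tilde a>0$. For the third assertion your outline is plausible but not quite right as written: once you pass to $u=|\nabla w|^2$, the source in the evolution for $u$ is $2\nabla w\cdot\nabla(\nabla^{i-1}f)$, which carries one more derivative on $f$ than the target norm $\|\nabla^{i-1}f\|_{L^p}$, so an additional integration by parts is needed that you do not mention; and the chain-rule term $(p/2-1)\int u^{p/2-2}|\nabla u|^2$ already has a good sign for $p\geq 2$ and need not be ``absorbed''. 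A cleaner route to the exact constant $2(p+N-2)$ is to test the equation for $w=\nabla^{i-1}z$ directly against $-\nabla\!\cdot(|\nabla w|^{p-2}\nabla w)$, integrate the source term by parts once so that only $\nabla^{i-1}f$ remains, and then combine Young's inequality with $|\nabla^2 w|^2\geq \frac{1}{N}(\La w)^2$; the factors $p$ and $N$ then enter exactly where you say they do. In any case the third part requires more care than your one-line sketch, as you yourself note.
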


\section{H\"older continuity}\label{sec-holder-estimates}
\setcounter{equation}{0}
\setcounter{thm}{0}

This section will be devoted to proof of H\"older regularity of \eqref{eq-cases-aligned-main-problem-of-Keller-Segel-System}. We start by stating our first result, Sobolev-type inequality.

\begin{lem}\label{eq-Sobolev-Inequality}
Let $\eta(x,t)$ be a cut-off function compactly supported in $B_r$ and let $u$ be a function defined in $\R^n\times(t_1,t_2)$ for any $t_2>t_1>0$. Then $u$ satisfies the following Sobolev inequalities:
\begin{equation}\label{eq-first-sobolev-inequality-1}
\left\|\eta u\right\|_{L^{\frac{2n}{n-2}}(\R^n)}\leq C\left\|\nabla (\eta u)\right\|_{L^2(\R^n)}
\end{equation}
and
\begin{equation}\label{eq-weighted-sobolev-inequality-for-holder}
\begin{aligned}
\|\eta\, u\|^2_{L^2(t_1,t_2;L^{2}(\R^n))}\leq C\Big(\sup_{t_1\leq t\leq t_2}\|\eta\, u\|^2_{L^{2}(\R^n)}+\|\D
(\eta\, u)\|^2_{L^2(t_1,t_2;L^2(\R^n))}\Big)\,
|\{\eta\, u>0\}|^{\frac{2}{n+2}}
\end{aligned}
\end{equation}
for some $C>0$.
\end{lem}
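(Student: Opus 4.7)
The plan is to handle the two inequalities separately, with the second reducing to a parabolic Sobolev embedding combined with a H\"older inequality in space-time.

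For inequality \eqref{eq-first-sobolev-inequality-1}, since $\eta$ is compactly supported in $B_r$, the product $\eta u$ has compact support as well and in particular belongs to $\dot H^1(\R^n)$. The classical Gagliardo--Nirenberg--Sobolev embedding $\dot H^1(\R^n)\hookrightarrow L^{2n/(n-2)}(\R^n)$ therefore gives \eqref{eq-first-sobolev-inequality-1} immediately.

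For inequality \eqref{eq-weighted-sobolev-inequality-for-holder}, set $w=\eta u$ and put $p=2(n+2)/n$. I would first establish a parabolic Sobolev-type embedding
\begin{equation*}
\|w\|_{L^{p}(t_1,t_2;L^{p}(\R^n))}^{2}\leq C\Big(\sup_{t_1\leq t\leq t_2}\|w(\cdot,t)\|_{L^{2}(\R^n)}^{2}+\|\nabla w\|_{L^{2}(t_1,t_2;L^{2}(\R^n))}^{2}\Big).
\end{equation*}
The standard route is to interpolate in space using H\"older between $L^{2}$ and $L^{2n/(n-2)}$, writing $\|w(t)\|_{L^{p}}\leq\|w(t)\|_{L^{2}}^{1-\theta}\|w(t)\|_{L^{2n/(n-2)}}^{\theta}$ with $\theta=n/(n+2)$ chosen so that $p\theta=2$. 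Raising to the $p$ power, integrating in $t$, controlling $\|w(t)\|_{L^{2n/(n-2)}}$ by $\|\nabla w(t)\|_{L^{2}}$ via \eqref{eq-first-sobolev-inequality-1}, and absorbing $(\sup_{t}\|w(t)\|_{L^{2}}^{2})^{2/n}\,\|\nabla w\|_{L^{2}(L^{2})}^{2}$ by Young's inequality with exponents $(n+2)/n$ and $(n+2)/2$ yields the displayed embedding.

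To pass from the parabolic embedding to the stated weighted inequality, I would apply H\"older's inequality to $\|w\|_{L^{2}(t_1,t_2;L^{2}(\R^n))}^{2}=\iint|w|^{2}\,\mathbf{1}_{\{w>0\}}\,dx\,dt$ with conjugate exponents $r=(n+2)/n$ and $r'=(n+2)/2$. Since $2r=p$, this produces
\begin{equation*}
\|w\|_{L^{2}(t_1,t_2;L^{2}(\R^n))}^{2}\leq\|w\|_{L^{p}(t_1,t_2;L^{p}(\R^n))}^{2}\,|\{w>0\}|^{2/(n+2)},
\end{equation*}
and substituting the parabolic embedding from the previous step gives exactly \eqref{eq-weighted-sobolev-inequality-for-holder}. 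The only genuinely delicate point is the careful bookkeeping of the exponents $p,\theta,r$ so that the powers match and the final estimate is homogeneous of degree two on both sides; everything else is an invocation of classical tools, so no real obstacle is expected.
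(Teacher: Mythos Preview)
Your proposal is correct and follows essentially the same route as the paper: both arguments interpolate $L^{2(n+2)/n}$ in space between $L^2$ and $L^{2n/(n-2)}$, invoke the classical Sobolev inequality \eqref{eq-first-sobolev-inequality-1} on the critical factor, pass from the product form to the sum via Young's inequality, and extract the factor $|\{\eta u>0\}|^{2/(n+2)}$ by H\"older with exponents $(n+2)/n$ and $(n+2)/2$. The only cosmetic difference is ordering---the paper applies H\"older first and interpolates afterward, whereas you first establish the parabolic embedding and then apply H\"older---but the ingredients and exponents are identical.
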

\begin{proof}
Since the first inequality is a well-known result, it suffices to prove the second inequality. We will use a modification of the technique of \cite{Zh} and \cite{KL2} to prove the lemma. Let $\chi_{\eta u}(x,t)$ be
the function with
\begin{equation*}
\chi_{_{\eta u}}=\begin{cases}
                     \begin{array}{ccc}
                           1 & \qquad &\eta u > 0\\
                           0 & \qquad &\eta u=0,
                     \end{array}
                \end{cases}
\end{equation*}
then we have
\begin{equation*}
\|\eta u\|^2_{L(t_1,t_2;L^2(\R^n)}=\int_{t_1}^{t_2}\int_{\R^n}|\eta
u|^2dxdt=\int_{t_1}^{t_2}\int_{\R^n}|\eta u|^2\chi_{_{\eta v}}dxdt.
\end{equation*}
Thus, by the H\"older inequality, we obtain
\begin{equation*}
\begin{aligned}
\|\eta u\|^2_{L(t_1,t_2;L^2(\R^n)}&\leq
\int_{t_1}^{t_2}\left(\int_{\R^n}|\eta
u|^{2\cdot\left(\frac{n+2}{n}\right)}dx\right)^{\frac{n}{n+2}}\left(\int_{\R^n}\left(\chi_{_{\eta
u}}\right)^{\frac{n+2}{2}}dx\right)^{\frac{2}{n+2}}dt\\
&\leq \left(\int_{t_1}^{t_2}\int_{\R^n}|\eta
u|^{2\cdot\left(\frac{n+2}{n}\right)}dxdt\right)^{\frac{n}{n+2}}|\{\eta
u>0\}|^{\frac{2}{n+2}}.
\end{aligned}
\end{equation*}
Now we use interpolation inequalities of $L^p$ spaces,
\begin{equation*}
\begin{aligned}
&\left(\int_{t_1}^{t_2}\int_{\R^n}|\eta
u|^{2\cdot\left(\frac{n+2}{n}\right)}dxdt\right)^{\frac{n}{n+2}}\leq
\left[\int_{t_1}^{t_2}\left(\int_{\R^n}|\eta
u|^2dx\right)^{(1-\beta)\overline{p}}\left(\int_{\R^n}|\eta u|^{2\overline{q}}dx\right)^{\frac{\beta \overline{p}}{\overline{q}}}dt\right]^{\frac{1}{\overline{p}}}
\end{aligned}
\end{equation*}
where $1<\overline{p}=\frac{n+2}{n}<\overline{q}$ and
$\frac{1}{\overline{p}}=\frac{\beta}{\overline{q}}+\frac{1-\beta}{1}$,
$\left(\beta=\frac{1}{\overline{p}}\right)$. Thus
\begin{equation*}
\begin{aligned}
&\left(\int_{t_1}^{t_2}\int_{\R^n}|\eta
u|^{2\cdot\left(\frac{n+2}{n}\right)}dxdt\right)^{\frac{n}{n+2}}\leq \sup_{t_1\leq t\leq
t_2}\left(\int_{\R^n}|\eta
u|^2dx\right)+\int_{t_1}^{t_2}\left(\int_{\R^n}|\eta
u|^{2\overline{q}}dx\right)^{\frac{1}{\overline{q}}}dt.
\end{aligned}
\end{equation*}
where $\overline{q}=\frac{n}{n-2}$. By \eqref{eq-first-sobolev-inequality-1}, we have
\begin{equation*}
\left(\int_{\R^n}|\eta u|^{2\overline{q}}dx\right)^{\frac{1}{\overline{q}}}\leq C\int_{\R^n}|\nabla(\eta u)|^{2}dx,
\end{equation*}
which gives the desired result.
\end{proof}

In order to develop the H\"older regularity method, it is necessary to localize the energy inequality by space and time truncation. Hence, we need to derive Local Energy Estimate in the interior of $\R^n\times(0,\infty)$ which will be the main tools in establishing H\"older estimates of solutions. Assume that $B_r$ is the ball of radius $r$ centered at $0\in\R^n$.
\begin{lem}\label{lem-Loc-Ene-Est-1}
Let $t_1<t_2$, $q>1$, $m>1$ and let $(u,v)$ be a weak solution pair to \eqref{eq-first-sobolev-inequality-1}. Then, there exists a constant $C<\infty$ such that for cut-off function $\eta$ compactly supported in $B_r$ and for every level $k$,
\begin{equation}\label{Holder-energy-estimate}
\begin{aligned}
&\int_{B_r\times
  \{t_2\}}\eta^2\left[\int_0^{(u^m-k)_\pm}(k\pm\xi)^{\frac{1}{m}-1}\xi\,
d\xi\right]\, dx+\int_{t_1}^{t_2}\int_{B_r}\left|\nabla (\eta (u^m-k)_{\pm})\right|^2dx\,dt\\
& \qquad  \leq Cm\Bigg(\int_{t_1}^{t_2}\int_{B_r} (u^m-k)_\pm^2\,\left|\nabla\eta\right|^2dx\,dt+\int_{t_1}^{t_2}\int_{B_r}
\left[\int_0^{(u^m-k)_\pm}(k\pm\xi)^{\frac{1}{m}-1}\xi\,
d\xi\right]|\eta\eta_t|\, dx\,dt\\
&\qquad \qquad +\int_{B_r\times\{t_1\}}
\eta^2\left[\int_0^{(u^m-k)_\pm}(k\pm\xi)^{\frac{1}{m}-1}\xi\,
d\xi\right]\, dx+\int_{t_1}^{t_2}\int_{B_r\cap\{(u^m-k)_\pm>0\}}u^{2(q-1)}\eta^2\left|\nabla v\right|^2\,dxdt\Bigg).
\end{aligned}
\end{equation}
\end{lem}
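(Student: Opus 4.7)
The plan is to test the weak formulation of the $u$-equation against $\varphi=\eta^{2}(u^{m}-k)_{+}$ (and against $-\eta^{2}(u^{m}-k)_{-}$ for the minus case), working with a Steklov time-average of the equation so that the distributional time derivative may be integrated by parts rigorously, and then passing to the limit. The natural antiderivative in time is
\begin{equation*}
\Phi(u,k):=\int_{k}^{u}(s^{m}-k)_{+}\,ds\;=\;\frac{1}{m}\int_{0}^{(u^{m}-k)_{+}}(k+\xi)^{\tfrac{1}{m}-1}\xi\,d\xi,
\end{equation*}
where the second form is obtained by the change of variables $\xi=s^{m}-k$. It satisfies $\partial_{t}\Phi(u,k)=u_{t}(u^{m}-k)_{+}$, so that $m\Phi$ is precisely the integrand in the bracketed expressions appearing in the statement.

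Next I manipulate the diffusion and drift terms separately. Using $\nabla u^{m}=\nabla(u^{m}-k)_{+}$ on $\{u^{m}>k\}$ and expanding the product, one obtains the identity
\begin{equation*}
\nabla u^{m}\cdot\nabla\bigl(\eta^{2}(u^{m}-k)_{+}\bigr)=\bigl|\nabla\bigl(\eta(u^{m}-k)_{+}\bigr)\bigr|^{2}-(u^{m}-k)_{+}^{2}|\nabla\eta|^{2}.
\end{equation*}
Combining this with the time computation yields, after using the (Steklov-averaged) weak equation,
\begin{equation*}
\int_{B_r}\!\eta^{2}\Phi\,dx\Big|_{t_{1}}^{t_{2}}-2\!\int_{t_1}^{t_2}\!\!\int_{B_r}\!\eta\eta_{t}\Phi\,dxdt+\!\int_{t_1}^{t_2}\!\!\int_{B_r}\!\bigl|\nabla(\eta(u^{m}-k)_{+})\bigr|^{2}dxdt=\!\int_{t_1}^{t_2}\!\!\int_{B_r}\!(u^{m}-k)_{+}^{2}|\nabla\eta|^{2}dxdt+\!\int_{t_1}^{t_2}\!\!\int_{B_r}\!u^{q-1}\nabla v\cdot\nabla\bigl(\eta^{2}(u^{m}-k)_{+}\bigr)dxdt.
\end{equation*}

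For the drift integral I split $\nabla(\eta^{2}(u^{m}-k)_{+})=\eta\,\nabla(\eta(u^{m}-k)_{+})+\eta(u^{m}-k)_{+}\nabla\eta$, both summands being supported on $\{(u^{m}-k)_{+}>0\}$, and apply Young's inequality with a small parameter $\varepsilon$. The resulting $\varepsilon\int|\nabla(\eta(u^{m}-k)_{+})|^{2}$ contribution is absorbed into the left-hand side; the remainder is controlled by $C\int(u^{m}-k)_{+}^{2}|\nabla\eta|^{2}+C\int_{\{(u^{m}-k)_{+}>0\}}u^{2(q-1)}\eta^{2}|\nabla v|^{2}$. Multiplying the resulting inequality through by $2m$ converts $\Phi$ into the bracketed quantity $m\Phi$ appearing in the lemma; since $m\geq 1$, the factor $m$ on the gradient term on the left may be dropped, producing the advertised constant $Cm$ in front of every term on the right. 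The minus case is parallel: one tests with $-\eta^{2}(u^{m}-k)_{-}$ and uses the antiderivative $\Phi_{-}(u,k):=\int_{u}^{k}(k-s^{m})_{+}\,ds=\tfrac{1}{m}\int_{0}^{(u^{m}-k)_{-}}(k-\xi)^{\tfrac{1}{m}-1}\xi\,d\xi$, noting that $\nabla u^{m}=-\nabla(u^{m}-k)_{-}$ on $\{u^{m}<k\}$ produces the identical sign structure.

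The principal technical difficulty I anticipate is the rigorous admissibility of $\eta^{2}(u^{m}-k)_{\pm}$ as a test function: the definition only provides $u^{m}\in L^{2}(0,T;H^{1})$ and $u\in L^{\infty}(0,T;L^{p})$, so $u_{t}$ is not an a priori object and the integration by parts in time above is formal. This is overcome in the standard way by writing the identity for Steklov averages $[u]_{h}$, $[u^{m}]_{h}$, $[u^{q-1}\nabla v]_{h}$ of the equation, testing with $\eta^{2}([u^{m}]_{h}-k)_{+}$, and letting $h\downarrow 0$ using the $L^{p}$-continuity of $[u]_{h}$ in $t$, dominated convergence for the bulk integrals, and the $L^{2}$ convergence $\nabla[u^{m}]_{h}\to\nabla u^{m}$ to pass to the limit in the diffusion and drift terms.
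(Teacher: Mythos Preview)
Your proposal is correct and follows essentially the same route as the paper: test the $u$-equation with $\eta^{2}(u^{m}-k)_{\pm}$, recognize the time derivative as $\tfrac{1}{m}\partial_{t}\!\int_{0}^{(u^{m}-k)_{\pm}}(k\pm\xi)^{\frac{1}{m}-1}\xi\,d\xi$, use the algebraic identity $\nabla u^{m}\cdot\nabla(\eta^{2}w)=|\nabla(\eta w)|^{2}-w^{2}|\nabla\eta|^{2}$ for the diffusion, split $\nabla(\eta^{2}w)=\eta\nabla(\eta w)+\eta w\nabla\eta$ in the drift, and apply Young's inequality (the paper simply takes $\varepsilon=\tfrac12$). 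The only differences are that you are more explicit about justifying the test function via Steklov averages (the paper proceeds formally) and that the lower limit in your definition of $\Phi$ should be any point where the integrand vanishes (e.g.\ $0$ or $k^{1/m}$) rather than $k$---a harmless slip since only $\partial_{u}\Phi=(u^{m}-k)_{+}$ is used.
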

\begin{proof}
We will use a modification of the technique of \cite{KL2} to prove the lemma. We have for every $t_1<t<t_2$:
\begin{equation*}
\begin{aligned}
\int_{B_r}\left(u^m-k\right)_\pm\eta^2u_t\,dx+\int_{B_r}\nabla\left(\left(w^{m}-k\right)_\pm\eta^2\right)\cdot\nabla(u^m)\,\,dx=\int_{B_r}u^{q-1}\nabla\left(\left(u^m-k\right)_\pm\eta^2\right)\cdot\nabla v\,dx.
\end{aligned}
\end{equation*}
Then,
\begin{equation}\label{eq-aligned-inequalities-for-energy-inequality-step-1}
\begin{aligned}
&\frac{1}{m}\int_{B_r}\frac{d}{dt}\left[\int_{0}^{\left(u^m-k\right)_\pm}\left(k\pm\xi\right)^{\frac{1}{m}-1}\xi\,d\xi\right]\eta^2\,dx+\int_{B_r}\left|\nabla\left(\left(u^m-k\right)_\pm\eta\right)\right|^2\,dx\\
&\qquad \qquad \qquad \leq \int_{B_r}\left(u^m-k\right)_\pm\left|\nabla\eta\right|^2\,dx+\int_{B_r}u^{q-1}\eta\nabla\left(\left(u^m-k\right)_\pm\eta\right)\cdot\nabla v\,dx\\
& \qquad \qquad \qquad \qquad \qquad \qquad +\int_{B_r}u^{q-1}\left(u^m-k\right)_\pm\eta\nabla\eta\cdot\nabla v\,dx.
\end{aligned}
\end{equation}
By Young's inequality, 
\begin{equation}\label{eq-aligned-inequalities-for-energy-inequality-tool-1}
\begin{aligned}
&\int_{B_r}u^{q-1}\eta\nabla\left(\left(u^m-k\right)_\pm\eta\right)\cdot\nabla v\,dx\\
&\qquad \qquad \leq \frac{1}{2}\int_{B_r}\left|\nabla\left(\left(u^m-k\right)_\pm\eta\right)\right|^2\,dx+\frac{1}{2}\int_{B_r\cap\left\{(u^m-k)_\pm>0\right\}}u^{2(q-1)}\eta^2\left|\nabla v\right|^2\,dx
\end{aligned}
\end{equation}
and
\begin{equation}\label{eq-aligned-inequalities-for-energy-inequality-tool-2}
\begin{aligned}
&\int_{B_r}u^{q-1}\left(u^m-k\right)_\pm\eta\nabla\eta\cdot\nabla v\,dx\\
&\qquad \qquad \leq \frac{1}{2}\int_{B_r}\left(u^m-k\right)_\pm\left|\nabla\eta\right|^2\,dx+\frac{1}{2}\int_{B_r\cap\{(u^m-k)_\pm>0\}}u^{2(q-1)}\eta^2\left|\nabla v\right|^2\,dx.
\end{aligned}
\end{equation}
Putting \eqref{eq-aligned-inequalities-for-energy-inequality-tool-1} and \eqref{eq-aligned-inequalities-for-energy-inequality-tool-2} in \eqref{eq-aligned-inequalities-for-energy-inequality-step-1} and integrating it in over $(t_1,t_2)$, we have
\begin{equation}\label{eq-aligned-inequalities-for-energy-inequality-step-2}
\begin{aligned}
&\frac{1}{m}\int_{B_r\times
  \{t_2\}}\eta^2\left[\int_0^{(w-k)_\pm}(k\pm\xi)^{\frac{1}{m}-1}\xi\,
d\xi\right]\, dx+\frac{1}{2}\int_{t_1}^{t_2}\int_{B_r}\left|\nabla (\eta (u^m-k)_\pm)\right|^2dx\,dt\\
&\qquad \leq \frac{3}{2}\int_{t_1}^{t_2}\int_{B_r} (u^m-k)_\pm^2\,\left|\nabla\eta\right|^2dx\,dt+\frac{2}{m}\int_{t_1}^{t_2}\int_{B_r}
\left[\int_0^{(w-k)_\pm}(k\pm\xi)^{\frac{1}{m}-1}\xi\,
d\xi\right]|\eta\eta_t|\, dx\,dt\\
&\qquad \qquad +\frac{1}{m}\int_{B_r\times\{t_1\}}
\eta^2\left[\int_0^{(w-k)_\pm}(k\pm\xi)^{\frac{1}{m}-1}\xi\,
d\xi\right]\, dx+\int_{t_1}^{t_2}\int_{B_r\cap\{(u^m-k)_\pm>0\}}u^{2(q-1)}\eta^2\left|\nabla v\right|^2\,dxdt.
\end{aligned}
\end{equation}
and consequently obtain \eqref{Holder-energy-estimate}.
\end{proof}
\begin{lem}\label{lemma-control-the-last-term-by-A-l-r-measure=at=t}
Let $A^{\pm}_{l,r}(t)=\left\{x\in B_{r}:(u^m(x,t)-l)_\pm>0\right\}$ and let $\eta$ be a cut-off function compactly supported in $B_r$. Then, there exist constants $\tilde{q}$, $\tilde{r}$, $\kappa$ and  $I>0$ such that
\begin{equation*}
\int_{t_1}^{t_2}\int_{B_r\cap\{(u^m-k)_{\pm}>0\}}u^{2(q-1)}\eta^2\left|\nabla v\right|^2\,dx< I\left(\int_{t_1}^{t_2}\left|A^{\pm}_{k,r}(t)\right|^{\frac{\tilde{r}}{\tilde{q}}}\,dt\right)^{\frac{2}{\tilde{r}}\left(1+\kappa\right)}.
\end{equation*}
\end{lem}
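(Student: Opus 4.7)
The plan is to exploit the $L^\infty$ bound on $u$ to reduce the left-hand side to an integral of $|\nabla v|^2$ over the super-level set, then use H\"older's inequality successively in space and time to isolate the measure factor, and finally absorb the resulting mixed norm of $\nabla v$ into the constant $I$ via the $L^p$-$L^q$ estimates of Lemma \ref{lem-Lemma-5-in-SK}.

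First I would observe that since any weak solution satisfies $u\in L^\infty(0,T;L^\infty(\R^n))$ by definition, one can pull out a factor of $C\|u\|_{\infty}^{2(q-1)}\|\eta\|_{\infty}^2$ and reduce the task to bounding $\int_{t_1}^{t_2}\int_{A^\pm_{k,r}(t)}|\nabla v|^2\,dx\,dt$. For each fixed $t$, H\"older's inequality in $x$ with conjugate exponents $\tilde q/(\tilde q-2)$ and $\tilde q/2$ gives
\[
\int_{A^\pm_{k,r}(t)}|\nabla v|^2\,dx\leq |A^\pm_{k,r}(t)|^{(\tilde q-2)/\tilde q}\,\|\nabla v(\cdot,t)\|_{L^{\tilde q}(\R^n)}^2,
\]
and H\"older in $t$ with conjugate exponents $\tilde r/(\tilde r-2)$ and $\tilde r/2$ then yields
\[
\int_{t_1}^{t_2}\int_{A^\pm_{k,r}(t)}|\nabla v|^2\,dx\,dt\leq \|\nabla v\|_{L^{\tilde r}(t_1,t_2;L^{\tilde q})}^2\left(\int_{t_1}^{t_2}|A^\pm_{k,r}(t)|^{\frac{(\tilde q-2)\tilde r}{\tilde q(\tilde r-2)}}\,dt\right)^{(\tilde r-2)/\tilde r}.
\]
At this stage I would select $\tilde q$ and $\tilde r$ so that the inner exponent of $|A^\pm_{k,r}(t)|$ equals $\tilde r/\tilde q$ and the outer exponent has the form $2(1+\kappa)/\tilde r$ for some $\kappa>0$; the simplest admissible choice is $\tilde q=\tilde r>4$, which forces $\kappa=(\tilde r-4)/2>0$ and reduces the inner exponent to $1$.

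The final step is to bound the mixed norm $\|\nabla v\|_{L^{\tilde r}(t_1,t_2;L^{\tilde q})}$ by a constant depending only on the data. When $\delta=1$ this follows directly from Lemma \ref{lem-Lemma-5-in-SK} with $f=u$: since $u$ lies in $L^\infty(0,T;L^{p'})$ for every $p'$, that lemma gives $\nabla v\in L^\infty(0,T;L^{\tilde q})$, and the finiteness of the interval $(t_1,t_2)$ then provides the $L^{\tilde r}$ integrability in time for free. For $\delta=0$ the corresponding bound is obtained pointwise in $t$ from the elliptic estimate of Lemma 2.2. Setting
\[
I:=C\|u\|_{\infty}^{2(q-1)}\|\eta\|_{\infty}^2\,\|\nabla v\|_{L^{\tilde r}(t_1,t_2;L^{\tilde q})}^2
\]
then yields the claimed inequality.

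The main obstacle is the simultaneous selection of $\tilde q$, $\tilde r$ and $\kappa$ so that all three requirements align: (i) both H\"older applications are admissible, (ii) the exponents of $|A^\pm_{k,r}(t)|$ match the target form exactly, and (iii) the mixed norm of $\nabla v$ is genuinely controlled by Lemma \ref{lem-Lemma-5-in-SK}. The strict positivity $\kappa>0$ is the essential feature here, not a cosmetic one: it is exactly the gain needed to run the De Giorgi--Moser iteration in the next section, where the measure $|A^\pm_{k,r}(t)|$ must decay geometrically with the level $k$.
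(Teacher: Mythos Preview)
Your argument is correct for the lemma as stated and follows the same two-step H\"older strategy as the paper, but there are two differences worth noting.

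First, you pull out the factor $u^{2(q-1)}\eta^2$ via the $L^\infty$ bound on $u$ before applying H\"older, whereas the paper keeps $u^{2a_1(q-1)}\eta^{2a_1}|\nabla v|^{2a_1}$ together inside the integral and bounds the resulting mixed norm by the constant $I$ using Lemma~\ref{lem-Lemma-5-in-SK}. Your simplification is legitimate and in fact a little cleaner.

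Second, and more relevant for the downstream use you mention, your identification of $\tilde q,\tilde r$ with the H\"older exponents themselves forces $\tilde q=\tilde r$: the requirement $\frac{(\tilde q-2)\tilde r}{\tilde q(\tilde r-2)}=\frac{\tilde r}{\tilde q}$ has only that solution, and then $\kappa>0$ gives $\tilde q=\tilde r>4$. The paper instead introduces independent auxiliary exponents $a_1,a_2\geq 1$ satisfying $\frac{1}{a_2}+\frac{n}{2a_1}=1-\kappa_1$ for some $0<\kappa_1<1$, and \emph{defines}
\[
\tilde q=\frac{2a_1(1+\kappa)}{a_1-1},\qquad \tilde r=\frac{2a_2(1+\kappa)}{a_2-1},\qquad \kappa=\frac{2}{n}\kappa_1.
\]
This parameterization buys the extra relation $\frac{n}{\tilde q}+\frac{2}{\tilde r}=\frac{n}{2}$, which is exactly what is needed in the parabolic Sobolev interpolation step \eqref{eq-relations-between-constnasts-after-a-little-changese3}--\eqref{eq-aligned-computation-for-finding-another-iteration-of-Y-i-and-X-i-2} of Lemma~\ref{lem-first-alternative-for-Holder-estimate}. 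Your choice $\tilde q=\tilde r>4$ never satisfies that relation for $n\geq 2$, so while it proves the present lemma, you would have to re-parameterize before running the De Giorgi iteration you allude to in your final paragraph.
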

\begin{proof}
We use a modification of the technique of \cite{Di} to prove the lemma. We first choose constants $a_1$, $a_2\geq 1$ satisfying 
\begin{equation}\label{eq-relation-between-a-1-and-a-2-for-smooth-computaiotion}
\frac{1}{a_2}+\frac{n}{2a_1}=1-\kappa_1
\end{equation}
for some constant $0<\kappa_1<1$. Then, by H\"older inequality, 
\begin{equation}\label{eq-aligned-split-the-last-term-of-Energy-Inequality-by-Holder-inequality}
\begin{aligned}
&\int_{t_1}^{t_2}\int_{B_r\cap\{(u^m-k)_\pm>0\}}u^{2(q-1)}\eta^2\left|\nabla v\right|^2\,dxd\tau\\
&\qquad \qquad \leq \int_{t_1}^{t_2}\left(\int_{B_r}u^{2a_1(q-1)}\eta^{2a_1}|\nabla v|^{2a_1}\,dx\right)^{\frac{1}{a_1}}\left|A^{\pm}_{k,r}(\tau)\right|^{1-\frac{1}{a_1}}\,d\tau\\
&\qquad \qquad \leq \left(\int_{t_1}^{t_2}\left(\int_{B_r}u^{2a_1(q-1)}\eta^{2a_1}|\nabla v|^{2a_1}\,dx\right)^{\frac{a_2}{a_1}}\,d\tau\right)^{\frac{1}{a_2}}\left(\int_{t_1}^{t_2}\left|A^{\pm}_{k,r}(\tau)\right|^{\left(1-\frac{1}{a_1}\right)\frac{a_2}{a_2-1}}\,d\tau\right)^{1-\frac{1}{a_2}}
\end{aligned}
\end{equation}
Since $u$ is a weak solution of \eqref{eq-cases-aligned-main-problem-of-Keller-Segel-System}, by Lemma \ref{lem-Lemma-5-in-SK}, there is a constant $I>0$ such that
\begin{equation*}
\left(\int_{t_1}^{t_2}\left(\int_{B_r}u^{2a_1(q-1)}\eta^{2a_1}|\nabla v|^{2a_1}\,dx\right)^{\frac{a_2}{a_1}}\,d\tau\right)^{\frac{1}{a_2}}\leq I.
\end{equation*}
Combining this with \eqref{eq-aligned-split-the-last-term-of-Energy-Inequality-by-Holder-inequality},
\begin{equation}\label{eq-applying-regularity-of-u-and-v-for-simplification}
\int_{t_1}^{t_2}\int_{B_r\cap\{(u^m-k)_\pm>0\}}u^{2(q-1)}\eta^2\left|\nabla v\right|^2\,dxd\tau\leq I\left(\int_{t_1}^{t_2}\left|A^{\pm}_{k,r}(\tau)\right|^{\left(1-\frac{1}{a_1}\right)\frac{a_2}{a_2-1}}\,d\tau\right)^{1-\frac{1}{a_2}}
\end{equation}
For simplification, we let
\begin{equation}\label{eq-definition-of-constant-tilde-q-and-kappa}
\tilde{q}=\frac{2\tilde{a_1}(1+\kappa)}{a_1-1}, \qquad  \tilde{r}=\frac{2a_2(1+\kappa)}{a_2-1}  \qquad \mbox{and} \qquad \kappa=\frac{2}{n}\kappa_1.
\end{equation}
Then, by \eqref{eq-applying-regularity-of-u-and-v-for-simplification} and \eqref{eq-definition-of-constant-tilde-q-and-kappa},
\begin{equation*}
\int_{t_1}^{t_2}\int_{B_r\cap\{(u^m-k)_\pm>0\}}u^{2(q-1)}\eta^2\left|\nabla v\right|^2\,dx\leq I\left(\int_{t_1}^{t_2}\left|A^{\pm}_{k,r}(\tau)\right|^{\frac{\tilde{r}}{\tilde{q}}}\,d\tau\right)^{\frac{2}{\tilde{r}}\left(1+\kappa\right)}
\end{equation*}
and the lemma follows.
\end{proof}
By Lemma \ref{lem-Loc-Ene-Est-1} and Lemma \ref{lemma-control-the-last-term-by-A-l-r-measure=at=t}, we have the following Local Energy Estimate.
\begin{corollary}[Local Energy Estimate]\label{cor-Loc-energy-estimate-with-constant-L03}
Let $t_1<t_2$, $q>1$, $m>1$ and let $(u,v)$ be a weak solution pair to \eqref{eq-first-sobolev-inequality-1}. We also let $\tilde{p}$, $\tilde{q}$, $\tilde{r}$, $\kappa$, $I>0$ be given by Lemma \ref{lemma-control-the-last-term-by-A-l-r-measure=at=t}. Then, there exists a constant $C<\infty$ such that for cut-off function $\eta$ compactly supported in $B_r$ and for every level $k$,
\begin{equation}\label{Holder-energy-estimate}
\begin{aligned}
&\int_{B_r\times
  \{t_2\}}\eta^2\left[\int_0^{(u^m-k)_\pm}(k\pm\xi)^{\frac{1}{m}-1}\xi\,
d\xi\right]\, dx+\int_{t_1}^{t_2}\int_{B_r}\left|\nabla (\eta (u^m-k)_\pm)\right|^2dx\,dt\\
&\qquad \leq Cm\Bigg(\int_{t_1}^{t_2}\int_{B_r} (u^m-k)_\pm^2\,\left|\nabla\eta\right|^2dx\,dt+\int_{t_1}^{t_2}\int_{B_r}\left[\int_0^{(u^m-k)_\pm}(k\pm\xi)^{\frac{1}{m}-1}\xi\,d\xi\right]|\eta\eta_t|\, dx\,dt\\
&\qquad \qquad +\int_{B_r\times\{t_1\}}
\eta^2\left[\int_0^{(u^m-k)_\pm}(k\pm\xi)^{\frac{1}{m}-1}\xi\,
d\xi\right]\, dx+\left(\int_{t_1}^{t_2}\left|A^{\pm}_{k,r}(\tau)\right|^{\frac{\tilde{r}}{\tilde{q}}}\,d\tau\right)^{\frac{2}{\tilde{r}}\left(1+\kappa\right)}\Bigg).
\end{aligned}
\end{equation}
where $A^{\pm}_{l,r}(t)=\left\{x\in B_{r}:(u^m(x,t)-l)_\pm>0\right\}$.
\end{corollary}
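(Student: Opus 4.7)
The proof is a direct concatenation: the statement is essentially the energy estimate of Lemma \ref{lem-Loc-Ene-Est-1} with its final term $\int_{t_1}^{t_2}\int_{B_r\cap\{(u^m-k)_\pm>0\}}u^{2(q-1)}\eta^2|\nabla v|^2\,dx\,dt$ replaced by the bound supplied by Lemma \ref{lemma-control-the-last-term-by-A-l-r-measure=at=t}. So the plan is to invoke Lemma \ref{lem-Loc-Ene-Est-1} verbatim, then substitute the bound
\begin{equation*}
\int_{t_1}^{t_2}\int_{B_r\cap\{(u^m-k)_\pm>0\}}u^{2(q-1)}\eta^2|\nabla v|^2\,dx\,dt\le I\left(\int_{t_1}^{t_2}\left|A^{\pm}_{k,r}(\tau)\right|^{\tilde r/\tilde q}\,d\tau\right)^{\frac{2}{\tilde r}(1+\kappa)}
\end{equation*}
into the right-hand side of \eqref{Holder-energy-estimate} (the version appearing in Lemma \ref{lem-Loc-Ene-Est-1}). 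After absorbing the constant $I$ into the overall constant $C$, we obtain exactly the inequality stated in the corollary.

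Concretely, I would proceed in the following order. First, I fix $t_1<t_2$, a cut-off $\eta$ compactly supported in $B_r$, and a level $k\in\mathbb{R}$. Second, I apply Lemma \ref{lem-Loc-Ene-Est-1} with these data, producing an inequality whose right-hand side contains four terms: the $|\nabla\eta|^2$ term, the $|\eta\eta_t|$ time-derivative term, the initial-trace term at $t=t_1$, and the $u^{2(q-1)}\eta^2|\nabla v|^2$ term. Third, I leave the first three terms untouched and apply Lemma \ref{lemma-control-the-last-term-by-A-l-r-measure=at=t} to the fourth term; the hypotheses of that lemma are already those of the present corollary, so no extra verification is needed. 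Fourth, I enlarge the constant to $C\max\{1,I\}$ so that a single constant $C$ controls both contributions, yielding the stated inequality.

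There is essentially no obstacle here: the statement is a bookkeeping corollary packaging Lemmas \ref{lem-Loc-Ene-Est-1} and \ref{lemma-control-the-last-term-by-A-l-r-measure=at=t} into a single usable energy inequality. The only minor point worth flagging is that the exponents $\tilde q,\tilde r,\kappa$ inherited from Lemma \ref{lemma-control-the-last-term-by-A-l-r-measure=at=t} (via the choice of $a_1,a_2$ in \eqref{eq-relation-between-a-1-and-a-2-for-smooth-computaiotion} and \eqref{eq-definition-of-constant-tilde-q-and-kappa}) must be declared as fixed once and for all before the corollary is applied, so that the constant $C$ in the final bound depends only on $n$, $m$, $q$, $\gamma$, and the $L^\infty$-norms of $u$ and $\nabla v$ entering the constant $I$.
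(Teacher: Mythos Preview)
Your proposal is correct and matches the paper's approach exactly: the paper states the corollary as an immediate consequence of Lemma~\ref{lem-Loc-Ene-Est-1} and Lemma~\ref{lemma-control-the-last-term-by-A-l-r-measure=at=t}, with no further argument given. Your write-up simply spells out the substitution and constant-absorption that the paper leaves implicit.
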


From now on, we start the story of H\"older continuity of the solution $u$ of \ref{eq-cases-aligned-main-problem-of-Keller-Segel-System}. To develop the H\"older regularity method, we need to handle the difficulty from the degeneracy. To get over it, we use the technique developed in  \cite{CD}, \cite{Di}, \cite{DK}, \cite{KL2}.\\
\indent The key idea of the proof is to work with cylinders whose dimensions are suitably rescaled to reflect the degeneracy exhibited by the equation. To make this precise, fix $(x_0,t_0)\in\R^n\times(0,T]$, for some $T>0$, and construct the cylinder
\begin{equation*}
\left[(x_0,t_0)+Q(2R,R^{2-\epsilon})\right]\subset\R^n\times(0,T], \qquad \left(0<R\leq 1\right)
\end{equation*}
where $\epsilon$ is a small positive number to be determined later. After a translation we may assume that $(x_0,t_0)=(0,0)$. Set
\begin{equation*}
\mu^+=\sup_{Q(2R,R^{2-\epsilon})}u^m, \qquad \mu^-=\inf_{Q(2R,R^{2-\epsilon})}u^m, \qquad \omega=\osc_{Q(2R,R^{2-\epsilon})} u^m=\mu^+-\mu^-.
\end{equation*}
and construct the cylinder
\begin{equation}\label{eq-construction-of-cylinder-with-some-constant-A-which-is-bigger}
Q\left(R,a_0^{-\alpha}R^2\right)=B_{R}\times\left(-a_0^{-\alpha}R^2,0\right), \qquad \left(a_0=\frac{\omega}{A}, \,\,\alpha=1-\frac{1}{m}\right)
\end{equation}
where $A$ is a constant to be determined later only in terms of the data. We will assume that
\begin{equation}\label{eq-condition-between-R-and-theta-alpha--1}
a_0^{\alpha}=\left(\frac{\omega}{A}\right)^{\alpha}>R^{\epsilon}.
\end{equation}
By \eqref{eq-construction-of-cylinder-with-some-constant-A-which-is-bigger} and \eqref{eq-condition-between-R-and-theta-alpha--1}, it can be easily checked that
\begin{equation*}
Q\left(R,a_0^{-\alpha}R^2\right)\subset Q(2R,R^{2-\epsilon}) \qquad \mbox{and} \qquad \osc_{Q\left(R,a_0^{-\alpha}R^2\right)}u^m\leq \omega.
\end{equation*}
\indent To begin the proof for the H\"older estimates, we consider sub-cylinders of smaller size in $Q(R,a_0^{-\alpha}R^2)$ constructed as follows. For any integer $s_0>0$, let $s_0$ be the smallest integer such that
\begin{equation}\label{eq-condition-for-s-0}
\frac{\omega}{2^{s_0}}<1
\end{equation}
and construct cylinders
\begin{equation*}
Q(R,\theta_0^{-\alpha}R^2), \qquad \left(\theta_0=\frac{\omega}{2^{s_0}}\right).
\end{equation*}
If the number $A$ is chosen larger than $2^{s_0}$, These are contained inside $Q(R,a_0^{-\alpha}R^2)$ and, by \eqref{eq-condition-between-R-and-theta-alpha--1},
\begin{equation}\label{eq-condition-between-R-and-theta-alpha}
\theta_0^{\alpha}=\left(\frac{\omega}{\theta_0}\right)^{\alpha}>\left(\frac{A}{2^{s_0}}\right)^{\alpha}R^{\epsilon}>R^{\epsilon}.
\end{equation}
We now first state the first alternative in this section states:
\begin{lem}\label{lem-first-alternative-for-Holder-estimate}
There exist positive numbers $\rho$ independent of $R$, $A$, $\mu^{\pm}$ and $\omega$ such that if 
\begin{equation}\label{eq-condition-1-of-first-alternatives-for-Holder}
\left|\left\{(x,t)\in Q\left(R,\theta_0^{-\alpha}R^2\right):\,\, u^m(x,t)<\mu^-+\frac{\omega}{2^{s_0}}\right\}\right|<\rho\left|Q\left(R,\theta_0^{-\alpha}R^2\right)\right|,
\end{equation}
then,
\begin{equation*}
u(x,t)>\mu^-+\frac{\omega}{2^{s_0+1}}, \qquad \mbox{for all $(x,t)\in Q\left(\frac{R}{2},\theta_0^{-\alpha}\left(\frac{R}{2}\right)^2\right)$}.
\end{equation*}
\end{lem}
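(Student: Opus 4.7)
\textbf{(Setup of the iteration.)} The plan is to run a De~Giorgi iteration, driven by the intrinsic cylinder $Q(R,\theta_0^{-\alpha}R^2)$, in order to propagate the smallness hypothesis into pointwise positivity on the smaller cylinder. Introduce the decreasing sequence of levels
\[
k_j := \mu^- + \frac{\omega}{2^{s_0+1}} + \frac{\omega}{2^{s_0+1+j}}, \qquad j=0,1,2,\dots,
\]
so that $k_0 = \mu^- + \omega/2^{s_0}$ and $k_j \downarrow \mu^- + \omega/2^{s_0+1}$. Attach to these the nested cylinders $Q_j := B_{R_j}\times(-\theta_0^{-\alpha}R_j^2,\,0)$ with $R_j := R/2 + R/2^{j+1}$, interpolating between $Q_0 = Q(R,\theta_0^{-\alpha}R^2)$ and $Q_\infty = Q(R/2,\theta_0^{-\alpha}(R/2)^2)$, and smooth cutoffs $\eta_j$ supported in $Q_j$, equal to $1$ on $Q_{j+1}$, vanishing at $t=-\theta_0^{-\alpha}R_j^2$, with $|\nabla\eta_j| \leq C\cdot 2^j/R$ and $|\partial_t \eta_j| \leq C\cdot 4^j\theta_0^{\alpha}/R^2$. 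Writing $A_j := \{(x,t)\in Q_j : u^m(x,t) < k_j\}$ and $Y_j := |A_j|/|Q_j|$, the hypothesis \eqref{eq-condition-1-of-first-alternatives-for-Holder} is exactly $Y_0 \leq \rho$.

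\textbf{(Energy estimate and intrinsic scaling.)} Apply the Local Energy Estimate (Corollary~\ref{cor-Loc-energy-estimate-with-constant-L03}) to $(u^m - k_j)_-$ with cutoff $\eta_j$. The decisive computation---which is where the choice $\alpha = 1 - 1/m$ and the stretched time scale $\theta_0^{-\alpha}R^2$ come from---is that for $\xi\in[0,(u^m-k_j)_-]$ the weight $k_j - \xi$ lies in $[u^m, k_j] \subset [0,\,\mu^- + \theta_0]$. A direct computation via the substitution $\eta = k_j - \xi$ yields the two-sided bound
\[
c_1\,\theta_0^{-\alpha}(u^m-k_j)_-^{2} \leq \int_{0}^{(u^m-k_j)_-}(k_j-\xi)^{\frac{1}{m}-1}\xi\,d\xi \leq c_2\,\theta_0^{-\alpha}(u^m-k_j)_-^{2}
\]
uniformly in $j$, so that $\theta_0^{-\alpha}|\eta_j\partial_t\eta_j| \sim 4^j/R^2$ balances $|\nabla\eta_j|^2 \sim 4^j/R^2$. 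Using also $(u^m-k_j)_- \leq \theta_0$ on $A_j$, one obtains
\[
\sup_{t}\int \eta_j^2(u^m-k_j)_-^{2}\,dx + \theta_0^{\alpha}\int\int|\nabla(\eta_j(u^m-k_j)_-)|^2\,dx\,dt \leq C\,b^j\,\theta_0^{2+\alpha}R^{-2}|A_j| + \theta_0^{\alpha}\mathcal{E}_j,
\]
with $b = 16$ and $\mathcal{E}_j := \bigl(\int_{-\theta_0^{-\alpha}R_j^2}^{0}|A_j(\tau)|^{\tilde r/\tilde q}\,d\tau\bigr)^{2(1+\kappa)/\tilde r}$ the chemotactic remainder from Lemma~\ref{lemma-control-the-last-term-by-A-l-r-measure=at=t}.

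\textbf{(Sobolev bootstrap and De~Giorgi iteration.)} Feed this into the parabolic Sobolev inequality \eqref{eq-weighted-sobolev-inequality-for-holder} applied to $\eta_j(u^m-k_j)_-$, and use $(u^m-k_j)_- \geq k_j - k_{j+1} = \theta_0/2^{j+2}$ on $A_{j+1}$ to get
\[
\frac{\theta_0^2}{4^{j+2}}|A_{j+1}| \leq C\,b^j R^{-2}\theta_0^{2}|A_j|^{1+\frac{2}{n+2}} + C\theta_0^{\alpha}\mathcal{E}_j\,|A_j|^{\frac{2}{n+2}}.
\]
Normalising by $|Q_{j+1}|\simeq R^{n+2}\theta_0^{-\alpha}$, absorbing the stray factor $\theta_0^{-2\alpha/(n+2)}$ by means of $\theta_0^{\alpha}>R^{\epsilon}$ from \eqref{eq-condition-between-R-and-theta-alpha} (choosing $\epsilon$ small), and noting that $\kappa>0$ makes $\mathcal{E}_j$ super-linear in $|A_j|$, we arrive at a recursion
\[
Y_{j+1} \leq C\,b^j\,\bigl(Y_j^{1+\sigma_1} + Y_j^{1+\sigma_2}\bigr), \qquad \sigma_1,\sigma_2 > 0,
\]
whose constants depend only on $n$, $m$, $q$, $\tilde r$, $\tilde q$, $\kappa$ and $I$---and crucially, not on $R$, $A$, $\mu^\pm$, or $\omega$. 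The classical geometric iteration lemma of De~Giorgi then supplies a universal threshold $\rho > 0$ such that $Y_0 \leq \rho$ forces $Y_j \to 0$, yielding $|\{u^m < \mu^- + \omega/2^{s_0+1}\}\cap Q(R/2,\theta_0^{-\alpha}(R/2)^2)| = 0$, which is the desired conclusion.

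\textbf{(Main obstacle.)} The hardest step is the intrinsic-scaling calculation in the second paragraph: rigorously justifying that $\int_0^{(u^m-k_j)_-}(k_j-\xi)^{1/m-1}\xi\,d\xi$ is two-sidedly comparable to $\theta_0^{-\alpha}(u^m-k_j)_-^{2}$ uniformly across the iteration, especially in the genuinely degenerate regime $\mu^- = 0$ where the weight $(k_j-\xi)^{1/m-1}$ is singular at $\xi = k_j$. A companion difficulty is verifying that, after the normalisation by $|Q_{j+1}|$ and the use of $\theta_0^{\alpha} > R^\epsilon$, the exponent $2(1+\kappa)/\tilde r$ attached to $\mathcal{E}_j$ truly delivers an exponent strictly larger than $1$ in $Y_j$, so that the chemotactic drift is absorbed rather than disrupting the De~Giorgi closure.
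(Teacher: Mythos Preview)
Your overall architecture is right, and in fact the ``main obstacle'' you flag at the end is not the real problem: the two--sided comparison
\[
c_1\,\theta_0^{-\alpha}(u^m-k_j)_-^{2}\;\le\;\int_{0}^{(u^m-k_j)_-}(k_j-\xi)^{\frac{1}{m}-1}\xi\,d\xi\;\le\;c_2\,\theta_0^{-\alpha}(u^m-k_j)_-^{2}
\]
does hold uniformly in $j$, because $k_j\ge \mu^-+\tfrac{\omega}{2^{s_0+1}}\ge \tfrac{\theta_0}{2}$, and one can split the integral according to whether $\xi\le k_j/2$ or not. The paper gets the lower bound as $\int\ge \tfrac{1}{m+1}(u^m-k)_-^{2-\alpha}\ge \tfrac{1}{m+1}\theta_0^{-\alpha}(u^m-k)_-^{2}$, which is the same thing.

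The genuine gap is your handling of the chemotactic remainder. The claim ``$\kappa>0$ makes $\mathcal E_j$ super--linear in $|A_j|$'' is false. From the definitions one has $2(1+\kappa)/\tilde r=(a_2-1)/a_2<1$, so if $A_j$ is concentrated on a time slab of width $\delta$ with full spatial measure one gets $\mathcal E_j\sim \delta^{(a_2-1)/a_2}$ while $|A_j|\sim\delta$; thus $\mathcal E_j$ is \emph{sub}--linear in $|A_j|$. Multiplying by the Sobolev factor $|A_j|^{2/(n+2)}$ does not automatically repair this, and no amount of $R^{n\kappa}$ housekeeping turns it into $Y_j^{1+\sigma}$ for a universal $\sigma>0$ without further input. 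Consequently your single--quantity recursion $Y_{j+1}\le Cb^{\,j}(Y_j^{1+\sigma_1}+Y_j^{1+\sigma_2})$ does not close as written.

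The paper resolves this exactly in the DiBenedetto manner: alongside $X_i=|A_i|/|Q_i|$ it introduces a \emph{second} normalised quantity
\[
Y_i=\frac{1}{|B_{R_i}|}\Bigl(\int_{-R_i^2}^{0}|Z_i(z)|^{\tilde r/\tilde q}\,dz\Bigr)^{2/\tilde r},
\]
derives the coupled inequalities $X_{i+1}\le C\,16^{\,i}\bigl(X_i^{1+\frac{2}{n+2}}+X_i^{\frac{2}{n+2}}Y_i^{1+\kappa}\bigr)$ and $Y_{i+1}\le C\,16^{\,i}\bigl(X_i+Y_i^{1+\kappa}\bigr)$ (the second via a separate anisotropic Sobolev embedding using $\tfrac{n}{\tilde q}+\tfrac{2}{\tilde r}=\tfrac{n}{2}$), and only then sets $L_i=X_i+Y_i^{1+\kappa}$ to obtain $L_{i+1}\le C'\,16^{i(1+\kappa)}L_i^{1+\sigma}$. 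Your sketch collapses $X_i$ and $Y_i$ into one object, and that is precisely the step that fails.
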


\begin{proof}
We will use a modification of the technique of \cite{Di} to prove the lemma. Set, for any non-negative integer $i$,
\begin{equation*}
R_i=\frac{R}{2}+\frac{R}{2^{i+1}} \qquad \mbox{and} \qquad l_i=\mu^-+\left(\frac{\omega}{2^{s_0+1}}+\frac{\omega}{2^{i+s_0+1}}\right).
\end{equation*}
We also denote
\begin{equation*}
A(l_i,R_i)=\left\{(x,t)\in Q\left(R,\theta_0^{-\alpha}R^2\right):u^m(x,t)<l_i\right\}.
\end{equation*}
From the definition, we have
\begin{equation*}
\left|A(l_i,R_i)\right|=\int_{-\theta_0^{-\alpha}R^{2}_i}^0\left|\left\{x\in B_{R_i}:u^m(x,t)<l_i\right\}\right|\,dt.
\end{equation*}
We consider a cut-off function $\eta_i(x,t)$ such that
\begin{equation*}
\begin{cases}
\begin{aligned}
&0\leq \eta_i\leq 1 \qquad \mbox{in $Q\left(R_i,\theta_0^{-\alpha}R_i^2\right)$},\\
&\eta_i=1 \qquad \mbox{in $Q\left(R_{i+1},\theta_0^{-\alpha}R_{i+1}^2\right)$},\\
&\eta_i=0 \qquad \mbox{on the parabolic boundary of $Q\left(R_i,\theta_0^{-\alpha}R_i^2\right)$},\\
&\left|\nabla\eta_i\right|\leq \frac{2^{i+2}}{R}, \qquad \left(\eta_i\right)_t\leq \frac{2^{2(i+2)}\theta_0^{\alpha}}{R^2}.
\end{aligned}
\end{cases}
\end{equation*}
We will use the Corollary \ref{cor-Loc-energy-estimate-with-constant-L03} for the function $u^{\ast}_i=\left(u^m-l_i\right)_-$ over the cylinder $Q\left(R_i,\theta_0^{-\alpha}R_i^2\right)$ where $i=0,1,2,\cdots$.  To control the first term in \eqref{Holder-energy-estimate}, we consider the function $F(\xi):\R^+\to\R$ defined by
\begin{equation*}
F(\xi)=\left(k-\xi\right)^{\frac{1}{m}-1}\xi,\qquad\left(k>0, \quad 0\leq \xi\leq \left(u^m-k\right)_-<k\right).
\end{equation*}
By simple computation, we have
\begin{equation}\label{eq-aligned-differentiability-of-F}
\begin{aligned}
F(\xi)=k(k-\xi)^{-\alpha}-(k-\xi)^{1-\alpha} \qquad \mbox{and} \qquad 0<u^m\leq k.
\end{aligned}
\end{equation}
Then, by \eqref{eq-aligned-differentiability-of-F},
\begin{equation}\label{eq-aligned-lower-bound-of-integral-for-F-with-first-assumption}
\begin{aligned}
\int_{0}^{\left(u^m-k\right)_-}F(\xi)\,d\xi&\geq k^{1-\alpha}(u^m-k)_--\frac{m}{m+1}\left(k^{2-\alpha}-\left(u^m\right)^{2-\alpha}\right)
\end{aligned}
\end{equation}
Since $2-\alpha>1-\alpha>0$, by \eqref{eq-aligned-lower-bound-of-integral-for-F-with-first-assumption}, we get 
\begin{equation}\label{eq-aligned-lower-bound-of-integral-for-F-with-second-assumption}
\begin{aligned}
\int_{0}^{\left(u^m-k\right)_-}F(\xi)\,d\xi&\geq (u^m-k)_-^{2-\alpha}-\frac{m}{m+1}(u^m-k)_-^{2-\alpha}=\frac{1}{m+1}(u^m-k)_-^{2-\alpha}
\end{aligned}
\end{equation}
By \eqref{eq-aligned-lower-bound-of-integral-for-F-with-second-assumption}, 
\begin{equation*}
\int_{B_r\times\{t_2\}}\eta^2\left[\int_0^{(u^m-k)_-}(k-\xi)^{\frac{1}{m}-1}\xi\,d\xi\right]\, dx\geq \frac{1}{m+1}\int_{B_r\times\{t_2\}}(u^m-k)_-^{-\alpha}\left[\eta\left(u^m-k\right)_-\right]^{2}\,dx.
\end{equation*}
Applying Corollary \ref{cor-Loc-energy-estimate-with-constant-L03} on $A(R_i,l_i)$ and multiplying by $\theta_0^{\alpha}$, we can get
\begin{equation}\label{eq-result-from-applied-by-energy-estimate}
\begin{aligned}
&\sup_{-\theta_0^{-\alpha}R_i^2<t<0}\left\|\eta_i u^{\ast}_i\right\|^2_{L^2(B_{R_i})}+\theta_0^{\alpha}\left\|\nabla\left(\eta_i u^{\ast}_i\right)\right\|^2_{L^2(Q\left(R_i,\theta_0^{-\alpha}R_i^2\right))}\\
& \qquad \leq Cm\theta_0^{\alpha}\left[\frac{2^{2(i+2)}\omega^2}{2^{2s_0}R_i^2}\left(1+2m\right)\int_{-\theta_0^{-\alpha}R_i^2}^0\left|A^{-}_{l_i,R_i}(t)\right|\,dt+ \left(\int_{-\theta_0^{-\alpha}R_i^2}^{0}\left|A^{-}_{l_i,R_i}(t)\right|^{\frac{\tilde{r}}{\tilde{q}}}\,dt\right)^{\frac{2}{\tilde{r}}\left(1+\kappa\right)}\right]
\end{aligned}
\end{equation}
for some constant $C>0$ where $A^{-}_{l,R}(t)=\left\{x\in B_R:u^m<l\right\}$. We introduce the change of time-variable $z=\theta_0^{\alpha}t$ which transforms $Q(R_i,\theta_0^{-\alpha}R_i^2)$ into
\begin{equation*}
Q_i=Q(R_i,R_i^2).
\end{equation*}
Setting also
\begin{equation*}
v(\cdot,z)=u\left(\cdot,\theta_0^{-\alpha}z\right) \qquad \mbox{and} \qquad \hat{\eta}_i(\cdot,z)=\eta_i(\cdot,\theta_0^{-\alpha}z),
\end{equation*}
the inequality \eqref{eq-result-from-applied-by-energy-estimate} can be written as
\begin{equation}\label{eq-result-from-applied-by-energy-estimate-after-change-of-variable}
\begin{aligned}
&\sup_{-R_i^2<t<0}\left\|\eta_i v^{\ast}_i\right\|^2_{L^2(B_{R_i})}+\left\|\nabla\left(\eta_i v^{\ast}_i\right)\right\|^2_{L^2(Q\left(R_i,-R_i^2\right))}\\
& \qquad \leq Cm\left[\frac{2^{2(i+2)}\omega^2}{2^{2s_0}R_i^2}\left(1+2m\right)Z_i+ \theta_0^{\frac{\alpha}{a_2}}\left(\int_{-R_i^2}^{0}\left|Z_i(z)\right|^{\frac{\tilde{r}}{\tilde{q}}}\,dt\right)^{\frac{2}{\tilde{r}}\left(1+\kappa\right)}\right]
\end{aligned}
\end{equation}
where
\begin{equation*}
Z_i(z)=\left\{x\in B_{R_i}:v(x,z)<l_i\right\} \qquad \mbox{and} \qquad Z_i=\int_{-R_i^2}^0\left|A_i(z)\right|\,dz.
\end{equation*}

By Lemma \ref{eq-Sobolev-Inequality} and \eqref{eq-result-from-applied-by-energy-estimate-after-change-of-variable}, we get
\begin{equation}\label{eq-aligned-first-of-two-inequalities-for-finding-relation-between-Z-is}
\begin{aligned}
&\left\|\eta_i v^{\ast}_i\right\|_{L^2(Q\left(R_i,R_i^2\right))}^2\\
& \qquad \leq Cm\left[\frac{2^{2(i+2)}\omega^2}{2^{2s_0}R_i^2}\left(1+2m\right)Z_i+ \theta_0^{\frac{\alpha}{a_2}}\left(\int_{-R_i^2}^{0}\left|Z_i(z)\right|^{\frac{\tilde{r}}{\tilde{q}}}\,dt\right)^{\frac{2}{\tilde{r}}\left(1+\kappa\right)}\right]Z_i^{\frac{2}{n+2}}\\
&\qquad \qquad =\frac{Cm2^{2(i+2)}\omega^2}{2^{2s_0}R_i^2}\left(1+2m\right)Z_i^{1+\frac{2}{n+2}}+Cm \theta_0^{\frac{\alpha}{a_2}}Z_i^{\frac{2}{n+2}}\left(\int_{-R_i^2}^{0}\left|Z_i(z)\right|^{\frac{\tilde{r}}{\tilde{q}}}\,dt\right)^{\frac{2}{\tilde{r}}\left(1+\kappa\right)}.
\end{aligned}
\end{equation}
We also have
\begin{equation}\label{eq-aligned-second-of-two-inequalities-for-finding-relation-between-Z-is}
\begin{aligned}
\int_{Q\left(R_i,R_i^2\right)}\left|\eta_iv^{\ast}_i\right|^2\,dxdt&\geq \left(l_{i+1}-l_i\right)^2\int_{-R_i^2}^0\left|\left\{(x,t)\in B_{R_{i+1}}:v^m<l_{i+1}\right\}\right|\,dt\\
&=\left(\frac{\omega}{2^{i+s_0+2}}\right)^2Z_{i+1}.
\end{aligned}
\end{equation}
By \eqref{eq-aligned-first-of-two-inequalities-for-finding-relation-between-Z-is} and \eqref{eq-aligned-second-of-two-inequalities-for-finding-relation-between-Z-is},
\begin{equation}\label{eq-recursion-relation-between-Z-n-s-1}
Z_{i+1}\leq \frac{Cm2^{4i+8}}{R_i^2}\left(1+2m\right)Z_i^{1+\frac{2}{n+2}}+Cm2^{2i+4}\theta_0^{-\alpha\left(\frac{2m}{m-1}-\frac{1}{a_2}\right)}Z_i^{\frac{2}{n+2}}\left(\int_{-R_i^2}^{0}\left|Z_i(z)\right|^{\frac{\tilde{r}}{\tilde{q}}}\,d\tau\right)^{\frac{2}{\tilde{r}}\left(1+\kappa\right)}.
\end{equation}
Divide by $\left|Q\left(R_{i+1},R_{i+1}^2\right)\right|$ and set the quantity
\begin{equation*}
X_i=\frac{Z_{i}}{\left|Q\left(R_i,R_i^2\right)\right|} \qquad \mbox{and} \qquad Y_i=\frac{1}{\left|B_{R_i}\right|}\left(\int_{-R_i^2}^{0}\left|Z_i(z)\right|^{\frac{\tilde{r}}{\tilde{q}}}\,d\tau\right)^{\frac{2}{\tilde{r}}}.
\end{equation*}
Then, we obtain from \eqref{eq-recursion-relation-between-Z-n-s-1} that
\begin{equation}\label{eq-first-for-X-i-Y-i}
X_{i+1}\leq C16^i\left(X_1^{1+\frac{2}{n+2}}+\theta_0^{-\alpha\left(\frac{2m}{m-1}-\frac{1}{a_2}\right)}R^{n\kappa}X_{i}^{\frac{2}{n+2}}Y_i^{1+\kappa}\right).
\end{equation}
If we choose $\epsilon$ small enough that
\begin{equation*}
\epsilon<n\kappa\left(\frac{2m}{m-1}-\frac{1}{a_2}\right)^{-1},
\end{equation*}
then by \eqref{eq-condition-between-R-and-theta-alpha} and \eqref{eq-first-for-X-i-Y-i}, we get 
\begin{equation}\label{eq-the-first-iteration-inequality-of-X-i-and-Y-i}
X_{i+1}\leq C{16}^i\left(X_i^{1+\frac{2}{n+2}}+X_{i}^{\frac{2}{n+2}}Y_i^{1+\kappa}\right) \qquad \forall n\in\Z^+
\end{equation}
where the constant $C$ depends on $n$, $m$ and $\kappa$. By an argument similar to \eqref{eq-aligned-second-of-two-inequalities-for-finding-relation-between-Z-is}, we have 
\begin{equation}\label{eq-aligned-computation-for-finding-another-iteration-of-Y-i-and-X-i-1}
\begin{aligned}
Y_{i+1}\left(l_i-l_{i+1}\right)^2&\leq \frac{1}{\left|B_{R_{i+1}}\right|}\left[\int_{-R_{i+1}^2}^{0}\left(\int_{B_{R_{i+1}}}\left(\eta_iv^{\ast}_i\right)^{\tilde{q}}\,dx\right)^{\frac{\tilde{r}}{\tilde{q}}}\,d\tau\right]^{\frac{2}{\tilde{r}}}\\
&\leq \frac{1}{\left|B_{R_{i+1}}\right|}\left[\int_{-R_i^2}^{0}\left(\int_{B_{R_{i}}}\left(\eta_iv^{\ast}_i\right)^{\tilde{q}}\,dx\right)^{\frac{\tilde{r}}{\tilde{q}}}\,d\tau\right]^{\frac{2}{\tilde{r}}}.
\end{aligned}
\end{equation}
Observe that, by \eqref{eq-relation-between-a-1-and-a-2-for-smooth-computaiotion} and \eqref{eq-definition-of-constant-tilde-q-and-kappa},
\begin{equation}\label{eq-relations-between-constnasts-after-a-little-changese3}
\frac{n}{\tilde{q}}+\frac{2}{\tilde{r}}=\frac{n}{2}.
\end{equation}
Then, by H\"older inequality, 
\begin{equation}\label{eq-aligned-computation-for-finding-another-iteration-of-Y-i-and-X-i-2}
\begin{aligned}
\left(\int_{B_{R_{i}}}\left(\eta_iv^{\ast}_i\right)^{\tilde{q}}\,dx\right)^{\frac{\tilde{r}}{\tilde{q}}}&=\left(\int_{B_{R_{i}}}\left(\eta_iv^{\ast}_i\right)^{\left\{n\left(\frac{\tilde{q}}{2}-1\right)\right\}+\left\{\tilde{q}-n\left(\frac{\tilde{q}}{2}-1\right)\right\}}\,dx\right)^{\frac{\tilde{r}}{\tilde{q}}}\\
&\leq \left(\int_{B_{R_{i}}}\left(\eta_iv^{\ast}_i\right)^{\frac{2n}{n-2}}\,dx\right)^{n\left(\frac{\tilde{q}}{2}-1\right)\cdot\frac{n-2}{2n}\cdot\frac{\tilde{r}}{\tilde{q}}}\cdot\left(\int_{B_{R_{i}}}\left(\eta_iv^{\ast}_i\right)^{2}\,dx\right)^{\frac{1}{2}\left(\tilde{q}-n\left(\frac{\tilde{q}}{2}-1\right)\right)\cdot\frac{\tilde{r}}{\tilde{q}}}\\
&=\left(\int_{B_{R_{i}}}\left(\eta_iv^{\ast}_i\right)^{\frac{2n}{n-2}}\,dx\right)^{\frac{n-2}{n}}\cdot\left(\int_{B_{R_{i}}}\left(\eta_iv^{\ast}_i\right)^{2}\,dx\right)^{\left(1-\frac{2}{\tilde{r}}\right)\cdot\frac{\tilde{r}}{2}}.
\end{aligned}
\end{equation}
By \eqref{eq-aligned-computation-for-finding-another-iteration-of-Y-i-and-X-i-1}, \eqref{eq-relations-between-constnasts-after-a-little-changese3} and \eqref{eq-aligned-computation-for-finding-another-iteration-of-Y-i-and-X-i-2},
\begin{equation*}
\begin{aligned}
Y_{i+1}\left(l_i-l_{i+1}\right)^2 &\leq \frac{1}{\left|B_{R_{i+1}}\right|}\left(\sup_{-R_i^2<t<0}\left\|\eta_i v^{\ast}_i\right\|^2_{L^2(B_{R_i})}\right)^{1-\frac{2}{\tilde{r}}}\left(\int_{-R_i^2}^0\left(\int_{B_{R_{i}}}\left(\eta_iv^{\ast}_i\right)^{\frac{2n}{n-2}}\,dx\right)^{\frac{n-2}{n}}\,d\tau\right)^{\frac{2}{\overline{r}}}\\
&\leq \frac{1}{\left|B_{R_{i+1}}\right|}\left(\sup_{-R_i^2<t<0}\left\|\eta_i v^{\ast}_i\right\|^2_{L^2(B_{R_i})}+\int_{-R_i^2}^0\left(\int_{B_{R_{i}}}\left(\eta_iv^{\ast}_i\right)^{\frac{2n}{n-2}}\,dx\right)^{\frac{n-2}{n}}\,d\tau\right).
\end{aligned}
\end{equation*}
Thus, by Sobolev inequality, \eqref{eq-first-sobolev-inequality-1},
\begin{equation*}
Y_{i+1}\left(l_i-l_{i+1}\right)^2 \leq \frac{1}{\left|B_{R_{i+1}}\right|}\left(\sup_{-R_i^2<t<0}\left\|\eta_i v^{\ast}_i\right\|^2_{L^2(B_{R_i})}+\left\|\nabla\left(\eta_i v^{\ast}_i\right)\right\|^2_{L^2(Q(R_i,R_i^2))}\right).
\end{equation*}
Therefore, by \eqref{eq-result-from-applied-by-energy-estimate},
\begin{equation}\label{eq-the-second-iteration-inequality-of-X-i-and-Y-i}
Y_{i+1}\leq C{16}^i\left(X_i+Y_i^{1+\kappa}\right), \qquad \forall i\in\Z^+.
\end{equation}
Let
\begin{equation*}
L_i=X_i+Y_i^{1+\kappa}, \qquad \forall i\in\Z^+.
\end{equation*}
Then, by \eqref{eq-the-first-iteration-inequality-of-X-i-and-Y-i} and \eqref{eq-the-second-iteration-inequality-of-X-i-and-Y-i},
\begin{equation}\label{eq-alinged-the-first-stpe-of-union-of-two-iteration-inequalities}
\begin{aligned}
L_{i+1}=X_{i+1}+Y_{i+1}^{1+\kappa}&\leq C16^{i}\left(X_i^{1+\frac{2}{n+2}}+X_i^{\frac{2}{n+2}}Y_i^{1+\kappa}\right)+C^{1+\kappa}16^{i(1+\kappa)}L_i^{1+\kappa}\\
&\leq C^{1+\kappa}16^{i(1+\kappa)}\left[X_i^{1+\frac{2}{n+2}}+X_i^{\frac{2}{n+2}}Y_i^{1+\kappa}+L_i^{1+\kappa}\right].
\end{aligned}
\end{equation}
Note that $X_i\leq L_i$ and $Y_i^{1+\kappa}\leq L_i$. Hence, if $L_i<1$, then
\begin{equation*}
\begin{aligned}
L_{i+1}=X_{i+1}+Y_{i+1}^{1+\kappa}&\leq C^{1+\kappa}16^{i(1+\kappa)}\left[2L_i^{1+\frac{2}{n+2}}+L_i^{1+\kappa}\right]\\
&\leq 2C^{1+\kappa}16^{i(1+\kappa)}L_i^{1+\sigma}, \qquad \qquad \left(\sigma=\min\left\{\kappa, \frac{2}{n+2}\right\}\right).
\end{aligned}
\end{equation*}
If we choose the constant $\rho$ in \eqref{eq-condition-1-of-first-alternatives-for-Holder} sufficiently small that
\begin{equation}\label{initial-condition-of-X-i-and-Y-i-iterations-1}
L_0=X_0+Y_0^{1+\kappa}\leq \left(\frac{1}{2C}\right)^{\frac{1+\kappa}{\sigma}}\left(\frac{1}{16}\right)^{\frac{1+\kappa}{\sigma^2}}
\end{equation}
holds, then 
\begin{equation*}
L_i\leq \left(\frac{1}{2C}\right)^{\frac{(1+\kappa)(1+\sigma)}{\sigma}}\left(\frac{1}{16}\right)^{\frac{(1+\kappa)(1+i\sigma)}{\sigma^2}}, \qquad \forall i\in\Z^+. 
\end{equation*}
Therefore, $X_i$ and $Y_i$ goes to zero as $i\to\infty$ and the lemma follows.
\end{proof}
For the alternative case, we follow the details in \cite{Di} and \cite{KL2}. We suppose that the assumption of Lemma \ref{lem-first-alternative-for-Holder-estimate} is violated, i.e., for every sub -cylinder $Q(R,\theta_0^{-\alpha}R^2)$
\begin{equation}\label{eq-condition-1-of-second-alternatives-for-Holder}
\left|\left\{(x,t)\in Q(R,\theta_0^{-\alpha}R^2):\,\, u^m(x,t)<\mu^-+\frac{\omega}{2^{s_0}}\right\}\right|>\rho\left|Q(R,\theta_0^{-\alpha}R^2)\right|,
\end{equation}
Since
\begin{equation*}
\mu^-+\frac{\omega}{2^{s_0}}\leq\mu^+-\frac{\omega}{2^{s_0}}, 
\end{equation*}
we can rewrite \eqref{eq-condition-1-of-second-alternatives-for-Holder} as
\begin{equation}\label{eq-condition-2-of-second-alternatives-for-Holder}
\left|\left\{(x,t)\in Q(R,\theta_0^{-\alpha}R^2):\,\, u^m(x,t)>\mu^+-\frac{\omega}{2^{s_0}}\right\}\right|\leq(1-\rho)\left|Q(R,\theta_0^{-\alpha}R^2)\right|
\end{equation}
valid for all cylinders 
\begin{equation*}
Q(R,\theta_0^{-\alpha}R^2)\subset Q(R,a_0^{-\alpha}R^2). 
\end{equation*}
Then, by arguments similar to the Lemma 4.2 in \cite{KL2}, we get the following lemma.
\begin{lem}\label{eq-lem-the-first-lemma-when-the-hypothesis-is-violated-124e}
If \eqref{eq-condition-1-of-first-alternatives-for-Holder} is violated, then there exists a time level
\begin{equation*}
t^{\ast}\in\left[-\theta_0^{-\alpha}R^2,-\frac{\rho}{2}\theta_0^{-\alpha}R^2\right]
\end{equation*}
such that
\begin{equation*}
\left|\left\{x\in B_R:\,\, u^m(x,t)>\mu^+-\frac{\omega}{2^{s_0}}\right\}\right|\leq\frac{1-\rho}{1-\frac{\rho}{2}}|B_R|.
\end{equation*}
\end{lem}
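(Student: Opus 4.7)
The plan is a standard pigeonhole argument in the time variable, starting from the reformulated hypothesis \eqref{eq-condition-2-of-second-alternatives-for-Holder}. I would argue by contradiction: assume that for every $t$ in the subinterval $\mathcal I:=[-\theta_0^{-\alpha}R^2,-\tfrac{\rho}{2}\theta_0^{-\alpha}R^2]$ the opposite of the desired conclusion holds, namely
\[
\bigl|\{x\in B_R:u^m(x,t)>\mu^+-\omega/2^{s_0}\}\bigr|>\tfrac{1-\rho}{1-\rho/2}\,|B_R|.
\]

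Since the subinterval $\mathcal I$ has length $(1-\rho/2)\,\theta_0^{-\alpha}R^2$, integrating this pointwise-in-time bound over $\mathcal I$ via Fubini yields
\[
\bigl|\{(x,t)\in B_R\times\mathcal I:u^m>\mu^+-\omega/2^{s_0}\}\bigr|>(1-\rho)\,|B_R|\,\theta_0^{-\alpha}R^2=(1-\rho)\,\bigl|Q(R,\theta_0^{-\alpha}R^2)\bigr|,
\]
where the identity uses the cancellation $\tfrac{1-\rho}{1-\rho/2}\cdot(1-\rho/2)=1-\rho$ and the volume formula for the cylinder. Enlarging the time range from $\mathcal I$ to $(-\theta_0^{-\alpha}R^2,0)$ can only enlarge the super-level set, so the strict bound persists on the full cylinder $Q(R,\theta_0^{-\alpha}R^2)$, which directly contradicts \eqref{eq-condition-2-of-second-alternatives-for-Holder}. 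Hence a time slice $t^\ast\in\mathcal I$ with the desired property must exist.

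There is no genuine technical obstacle in this step: the whole argument is Fubini together with the pigeonhole principle, and the arithmetic is rigged precisely so that the contradiction with \eqref{eq-condition-2-of-second-alternatives-for-Holder} is tight. The role of the lemma within the broader H\"older strategy is to pick out a time slice $t^\ast$ on which $u^m$ is not heavily concentrated near the upper envelope $\mu^+-\omega/2^{s_0}$; that slice will later serve as initial data from which a De Giorgi-type iteration built on Corollary \ref{cor-Loc-energy-estimate-with-constant-L03} can propagate the smallness of the super-level set forward in time through the sub-cylinders of $Q(R,\theta_0^{-\alpha}R^2)$.
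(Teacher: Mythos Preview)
Your argument is correct and is exactly the standard Fubini/pigeonhole computation that the paper has in mind when it defers to ``arguments similar to Lemma~4.2 in \cite{KL2}''; the paper does not spell out its own proof, but the referenced argument is precisely the contradiction you wrote, with the same arithmetic $\tfrac{1-\rho}{1-\rho/2}\cdot(1-\rho/2)=1-\rho$ against \eqref{eq-condition-2-of-second-alternatives-for-Holder}.
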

The lemma asserts that, at some time $t^{\ast}$, the set where $u^{m}$ is close to its supremum captures only a portion of the $B_R$. The next lemma give us that this occurs for all time levels near the $Q(R,\theta_0^{-\alpha}R^2)$. Set
\begin{equation*}
H=\sup_{B_{R}\times\left[t^{\ast},0\right]}\left|\left(u^m-\left(\mu^+-\frac{\omega}{2^{s_0}}\right)\right)_+\right|.
\end{equation*}
\begin{lem}\label{lem-main-tools-for=the-second-altermativce-estimatieion-3452}
There exists a positive integer $s_1>s_0$ such that if
\begin{equation*}
H>\frac{\omega}{2^{s_1}},
\end{equation*}
then
\begin{equation}\label{eq-in-the-second-lemma-for-the-second-alternative-for-Holder-estiamt}
\left|\left\{x\in B_R:\,\, u^m(x,t)>\mu^+-\frac{\omega}{2^{s_1}}\right\}\right|\leq\left(1-\left(\frac{\rho}{2}\right)^2\right)|B_R|, \qquad \forall t\in\left[t^{\ast},0\right].
\end{equation}
\end{lem}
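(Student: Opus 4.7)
The plan is to run DiBenedetto's standard second-alternative argument, adapted to the extra $\nabla v$ coupling, in order to propagate the pointwise-in-time measure bound given at $t=t^\ast$ by Lemma~\ref{eq-lem-the-first-lemma-when-the-hypothesis-is-violated-124e} forward to every $t\in[t^\ast,0]$ through a logarithmic test function that concentrates on levels of $u^m$ close to $\mu^+$. For $s_1>s_0$ to be chosen later, I would set $k:=\mu^+-\omega/2^{s_0}$, $c:=\omega/2^{s_1}$, and introduce
\[ \Psi(\xi):=\ln^+\frac{H}{H-\xi+c},\qquad 0\le\xi\le H, \]
a convex nondecreasing function satisfying the identity $\Psi''=(\Psi')^2$ and bounded by $\ln(H/c)\le(s_1-s_0)\ln 2$. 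The hypothesis $H>\omega/2^{s_1}$ ensures $\Psi$ reaches large values; on $\{u^m(\cdot,t)>\mu^+-\omega/2^{s_1}\}$ one checks $(u^m-k)_+>\omega/2^{s_0}-\omega/2^{s_1}$, hence $H-(u^m-k)_+<\omega/2^{s_1}=c$ (using $H\le\omega/2^{s_0}$) and $\Psi\ge(s_1-s_0-1)\ln 2$.

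Next, I would take a time-independent cut-off $\eta\in C_0^\infty(B_R)$ with $\eta\equiv 1$ on $B_{(1-\sigma)R}$ and $|\nabla\eta|\le 1/(\sigma R)$, and---after the change of time variable $z=\theta_0^\alpha t$ used already in the proof of Lemma~\ref{lem-first-alternative-for-Holder-estimate}---test the rescaled $u$-equation against $\Psi\Psi'((u^m-k)_+)\eta^2$. Because $\Psi''=(\Psi')^2$, the gradient cross-term will be absorbed into the nonnegative dissipation $\int(\Psi')^2|\nabla u^m|^2\eta^2$, and Young's inequality applied to the $\nabla v$-coupling will leave, after integrating from $t^\ast$ to any $t\in[t^\ast,0]$, an estimate of the form
\[ \int_{B_R}\!\!\Psi^2(\cdot,t)\eta^2\,dx \le \int_{B_R}\!\!\Psi^2(\cdot,t^\ast)\eta^2\,dx + C\!\int_{t^\ast}^{t}\!\!\int_{B_R}\!\!\Psi|\nabla\eta|^2\,dx\,d\tau + C\!\int_{t^\ast}^{t}\!\!\int_{B_R\cap\{u^m>k\}}\!\!u^{2(q-1)}|\nabla v|^2\eta^2\,dx\,d\tau. \]

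I would then bound each term on the right: Lemma~\ref{eq-lem-the-first-lemma-when-the-hypothesis-is-violated-124e} together with $\Psi\le(s_1-s_0)\ln 2$ bounds the first by $\bigl((s_1-s_0)\ln 2\bigr)^2\tfrac{1-\rho}{1-\rho/2}|B_R|$; in the rescaled variable the time interval has length at most $R^2$, so the second is at most $C(s_1-s_0)\sigma^{-2}|B_R|$; and Lemma~\ref{lemma-control-the-last-term-by-A-l-r-measure=at=t} bounds the third by a quantity depending on the data but independent of $s_1$. On the left, the lower bound $\Psi\ge(s_1-s_0-1)\ln 2$ on $\{u^m(\cdot,t)>\mu^+-\omega/2^{s_1}\}\cap B_{(1-\sigma)R}$ yields
\[ \int_{B_R}\!\!\Psi^2(\cdot,t)\eta^2\,dx\ge\bigl((s_1-s_0-1)\ln 2\bigr)^2\,|\{u^m(\cdot,t)>\mu^+-\omega/2^{s_1}\}\cap B_{(1-\sigma)R}|. \]
Dividing by $\bigl((s_1-s_0-1)\ln 2\bigr)^2$ and adding $n\sigma|B_R|$ for the corona $B_R\setminus B_{(1-\sigma)R}$ will produce
\[ |\{u^m(\cdot,t)>\mu^+-\omega/2^{s_1}\}|\le\tfrac{(s_1-s_0)^2}{(s_1-s_0-1)^2}\tfrac{1-\rho}{1-\rho/2}|B_R|+\tfrac{C}{s_1-s_0-1}|B_R|+n\sigma|B_R|+\tfrac{C'}{(s_1-s_0-1)^2}. \]
Since $\tfrac{1-\rho}{1-\rho/2}<1-(\rho/2)^2$ for $\rho\in(0,1)$, I would finish by first fixing $\sigma$ small and then $s_1$ large enough (depending only on $\rho$ and the constants in Lemmas~\ref{eq-lem-the-first-lemma-when-the-hypothesis-is-violated-124e} and~\ref{lemma-control-the-last-term-by-A-l-r-measure=at=t}) so that the sum on the right is $\le(1-(\rho/2)^2)|B_R|$.

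The hard part will be the $\nabla v$ coupling, which is absent from the classical porous medium argument: the whole scheme hinges on Lemma~\ref{lemma-control-the-last-term-by-A-l-r-measure=at=t} delivering an $s_1$-independent bound, so that after division by $(s_1-s_0-1)^2$ this term becomes genuinely negligible. A secondary technical nuisance is the factor $\theta_0^{-\alpha}$ that appears in front of the $|\nabla\eta|^2$ integral before rescaling; this is handled, as in Lemma~\ref{lem-first-alternative-for-Holder-estimate}, by the time rescaling $z=\theta_0^\alpha t$ together with the intrinsic-scaling condition~\eqref{eq-condition-between-R-and-theta-alpha}, which is exactly what makes the cylinder $Q(R,\theta_0^{-\alpha}R^2)$ the correct geometric object for this degenerate equation.
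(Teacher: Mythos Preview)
Your displayed logarithmic energy inequality is not correct, and the error is precisely in the term you identify as ``the hard part''. When you test against $2mu^{m-1}\Psi\Psi'\eta^2$ and expand the gradient of the test function, the $\nabla v$ cross-term carries the factor $(1+\Psi)(\Psi')^2$ (coming from $(\Psi^2)'' = 2(1+\Psi)(\Psi')^2$). Young's inequality, splitting against the good dissipation $\int u^{m-1}(1+\Psi)(\Psi')^2|\nabla u^m|^2\eta^2$, therefore leaves a residual of the form
\[
C\int_{t^\ast}^{t}\!\int_{B_R}(1+\Psi)(\Psi')^2\,u^{2q-m-1}\,|\nabla v|^2\,\eta^2\,dx\,d\tau,
\]
\emph{not} the $(\Psi')$-free integral you wrote (compare the paper's \eqref{eq-after-young-s-inequality-for-the-second-alternative}). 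Since $\Psi'=(H-(u^m-k)_++c)^{-1}\le c^{-1}=2^{s_1}/\omega$, this residual is of order $(2^{s_1}/\omega)^2$ and thus \emph{grows exponentially} in $s_1$; dividing by $((s_1-s_0-1)\ln 2)^2$ does not make it small. Lemma~\ref{lemma-control-the-last-term-by-A-l-r-measure=at=t} gives a bound on the $u^{2(q-1)}|\nabla v|^2$ factor only, and cannot remove the $(\Psi')^2$ that sits in front of it.

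The paper closes this gap through a mechanism you have not invoked: the free parameter $A$ in the enclosing cylinder $Q(R,a_0^{-\alpha}R^2)$, $a_0=\omega/A$. After applying Lemma~\ref{lemma-control-the-last-term-by-A-l-r-measure=at=t} and the intrinsic-scaling inequality~\eqref{eq-condition-between-R-and-theta-alpha} (with $\epsilon$ chosen so that $\epsilon(1-1/a_2)<n\kappa$), the $\nabla v$ residual acquires an extra factor $(2^{s_0}/A)^{\alpha(1-1/a_2)}$; see~\eqref{eq-aligned-after-integration-for-the-time-from-t-0-to-0t-ast-and-simplification-1} and~\eqref{eq-aligned-after-integration-for-the-time-from-t-0-to-0t-ast-and-simplification-2}. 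One first fixes $\nu$ (your $\sigma$) small and then $s_1$ large to handle the classical terms, and \emph{afterwards} takes $A$ large enough that even the $4^{s_1}$-sized $\nabla v$ contribution is pushed below $\tfrac14\rho^2|B_R|$. Without this additional degree of freedom the argument cannot close; your proposal needs to reinstate the $(\Psi')^2$ factor and explain how $A$ is used to control it.
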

\begin{proof}
The proof is similar to that of Lemma 7.1 of the Chapter III in \cite{Di}. We introduce the logarithmic function which appears in Section 2 in \cite{Di}
\begin{equation*}
\Psi\left(H,(u^m-k)_+,c\right)\equiv\max\left\{\log\left(\frac{H}{H-\left(u^m-k\right)_++c}\right),0\right\}
\end{equation*}
for $k=\mu^+-\frac{\omega}{2^{s_0}}$, $c=\frac{\omega}{2^{s_1}}$. For simplicity, set $\Psi\left(H,\left(u^m-k\right)_+,c\right)=\vp(u^m)$. We next apply to the first equation of \eqref{eq-cases-aligned-main-problem-of-Keller-Segel-System} the testing  function
\begin{equation*}
mu^{m-1}\frac{\partial}{\partial u^{\ast}}\left[\vp^2(u^{\ast})\right]\xi^2=m\left(u^{\ast}\right)^{\alpha}\left[\vp^2(u^{\ast})\right]'\xi^2, \qquad \left(u^{\ast}=u^m,\,\,\alpha=1-\frac{1}{m}\right) 
\end{equation*}
where $\xi(x)$ is a smooth cut-off function such that, for $0<\nu<1$,
\begin{equation*}
\xi=1 \qquad \mbox{in $B_{(1-\nu)R}$}, \qquad \qquad \xi=0 \qquad \mbox{on $\partial B_{R}$}
\end{equation*}
and 
\begin{equation*}
\left|D\xi\right|\leq \frac{2}{\nu R}.
\end{equation*}
Then, we have for every $t^{\ast}<t<t_0$
\begin{equation*}
\begin{aligned}
\int_{B_R}\left(\vp^2\xi^2\right)_t\,dx&=-\int_{B_R}\nabla u^m\cdot\nabla\left(mu^{m-1}\left(\vp^2\right)'\xi^2\right)\,dx\\
&\qquad \qquad -\int_{B_R}\left(u^{q-1}\nabla v\right)\cdot\nabla\left(mu^{m-1}\left(\vp^2\right)'\xi^2\right)\,dx\\
&\leq -2\left(m-1\right)\int_{B_R}u^{-1}\vp\vp'\xi^2\left|\nabla u^m\right|^2\,dx-2m\int_{B_R}u^{m-1}(1+\vp)\left(\vp'\right)^2\xi^2\left|\nabla u^m\right|^2\,dx\\
&\qquad +4m\int_{B_R}u^{m-1}\vp\left|\vp'\right|\xi\left|\nabla u^m\right|\left|\nabla\xi\right|\,dx+2\left(m-1\right)\int_{B_R}u^{q-2}\vp\left|\vp'\right|\xi^2\left|\nabla v\right|\left|\nabla u^m\right|\,dx\\
&\qquad +2m\int_{B_R}u^{q+m-2}(1+\vp)\left(\vp'\right)^2\xi^2\left|\nabla v\right|\left|\nabla u^m\right|\,dx+4m\int_{B_R}u^{q+m-2}\vp\left|\vp'\right|\xi\left|\nabla v\right|\left|\nabla\xi\right|\,dx
\end{aligned}
\end{equation*}
By Young's inequality,
\begin{equation}\label{eq-after-young-s-inequality-for-the-second-alternative}
\begin{aligned}
\int_{B_R}\left(\vp^2\xi^2\right)_t\,dx&\leq -\left(m-1\right)\int_{B_R}u^{-1}\vp\vp'\xi^2\left|\nabla u^m\right|^2\,dx-m\int_{B_R}u^{m-1}\left(\vp'\right)^2\xi^2\left|\nabla u^m\right|^2\,dx\\
&\qquad +8m\int_{B_R}u^{m-1}\vp\left|\nabla\xi\right|^2\,dx\\
&\qquad +6m\int_{B_R}\left(1+u^m\right)\left(1+\vp\right)\left(\vp'\right)^2u^{2q-3}\xi^2\left|\nabla v\right|^2\,dx.
\end{aligned}
\end{equation}
Note that 
\begin{equation}\label{eq-sign-of-vp-'-1}
\vp'=\frac{1}{H-(u^m-k)_++c}>0
\end{equation}
and by Lemma \ref{lemma-control-the-last-term-by-A-l-r-measure=at=t}
\begin{equation}\label{eq-aligned-control-of-the-last-term-in-term-with-Psi}
\begin{aligned}
&\int_{t^{\ast}}^0\int_{B_R}\left(1+u^m\right)\left(1+\vp\right)\left(\vp'\right)^2u^{2q-3}\xi^2\left|\nabla v\right|^2\,dx\\
&\qquad \qquad \leq \left(1+(\mu^+)^m\right)\left(1+(s_1-s_0)\log2\right)\left(\frac{2^{s_1}}{\omega}\right)^2\left(\mu^+-\frac{\omega}{2^{s_0}}\right)^{2q-3}\left(\int_{t^{\ast}}^0\left|A^+_{1}(t)\right|^{\frac{\tilde{r}}{\tilde{q}}}\,dt\right)^{\frac{2}{\tilde{r}}(1+\kappa)}
\end{aligned}
\end{equation}
where 
\begin{equation*}
\left|A^+_1(t)\right|=\left|\left\{x\in B_R:u(x,t)>\mu^+-\frac{\omega}{2^{s_0}}\right\}\right|.
\end{equation*}
Thus, by \eqref{eq-condition-for-s-0}, \eqref{eq-after-young-s-inequality-for-the-second-alternative}, \eqref{eq-sign-of-vp-'-1}, \eqref{eq-aligned-control-of-the-last-term-in-term-with-Psi} and Lemma \ref{eq-lem-the-first-lemma-when-the-hypothesis-is-violated-124e},
\begin{equation}\label{eq-aligned-after-integration-for-the-time-from-t-0-to-0t-ast-and-simplification--0}
\begin{aligned}
&\sup_{t^{\ast}<t<0}\int_{B_R}\Psi^2(H,(u^m-k)_-,c)(x,t)\xi^2(x)\,dx\\
&\qquad \leq (s_1-s_0)^2\left(\log 2\right)^2\left(\frac{1-\rho}{1-\frac{\rho}{2}}\right)|B_R|+\frac{8m\left(2^{s_0}\right)^{\alpha}\left(\mu^+\right)^{m-1}(s_1-s_0)\log 2 }{\omega^{\alpha}\nu^2}|B_R|\\
& \qquad \qquad+6m\left(1+(\mu^+)^{m}\right)\left(1+(s_1-s_0)\log 2\right)\left(\frac{2^{s_1}}{\omega}\right)^2\left(\mu^+-\frac{\omega}{2^{s_0}}\right)^{2q-3}\frac{R^{n\kappa}}{\theta_0^{\alpha\left(1-\frac{1}{a_2}\right)}}|B_R|
\end{aligned}
\end{equation} 
where $a_2$ is given in \eqref{eq-relation-between-a-1-and-a-2-for-smooth-computaiotion}. If we choose $\epsilon$ small enough that $\epsilon\left(1-\frac{1}{a_2}\right)<n\kappa$, then by \eqref{eq-condition-between-R-and-theta-alpha} and \eqref{eq-aligned-after-integration-for-the-time-from-t-0-to-0t-ast-and-simplification--0},
\begin{equation}\label{eq-aligned-after-integration-for-the-time-from-t-0-to-0t-ast-and-simplification-1}
\begin{aligned}
&\sup_{t^{\ast}<t<0}\int_{B_R}\Psi^2(H,(u^m-k)_-,c)(x,t)\xi^2(x)\,dx\\
&\qquad \leq (s_1-s_0)^2\left(\log 2\right)^2\left(\frac{1-\rho}{1-\frac{\rho}{2}}\right)|B_R|+\frac{8m\left(2^{s_0}\right)^{\alpha}\left(\mu^+\right)^{m-1}(s_1-s_0)\log 2 }{\omega^{\alpha}\nu^2}|B_R|\\
& \qquad \qquad+6m\left(1+(\mu^+)^{m}\right)\left(1+(s_1-s_0)\log 2\right)\left(\frac{2^{s_1}}{\omega}\right)^2\left(\mu^+-\frac{\omega}{2^{s_0}}\right)^{2q-3}\left(\frac{2^{s_0}}{A}\right)^{\alpha\left(1-\frac{1}{a_2}\right)}|B_R|.
\end{aligned}
\end{equation} 
The integral on the left hand side of \eqref{eq-aligned-after-integration-for-the-time-from-t-0-to-0t-ast-and-simplification-1} is bounded below by integrating over the small set
\begin{equation*}
\left\{x\in B_{(1-\nu)R}:u^m(x,t)>\mu^+-\frac{\omega}{2^{s_1}}\right\}.
\end{equation*}
On such a set
\begin{equation}\label{eq-lower-bound-of-Psi-over-B-1---mu-R}
\Psi\left(H,\left(u^m-\left(\mu^+-\frac{\omega}{2^{s_0}}\right)\right),\frac{\omega}{2^{s_1}}\right)\geq (s_1-s_0-1)^2\left(\log 2\right)^2.
\end{equation}
Thus, by \eqref{eq-construction-of-cylinder-with-some-constant-A-which-is-bigger}, \eqref{eq-aligned-after-integration-for-the-time-from-t-0-to-0t-ast-and-simplification-1} and \eqref{eq-lower-bound-of-Psi-over-B-1---mu-R} , we get
\begin{equation*}
\begin{aligned}
&\left|\left\{x\in B_{(1-\nu)R}:u^m(x,t)>\mu^+-\frac{\omega}{2^{s_1}}\right\}\right|\\
&\qquad \leq \left(\frac{s_1-s_0}{s_1-s_0-1}\right)^2\left(\frac{1-\rho}{1-\frac{\rho}{2}}\right)|B_R|+\frac{8m\left(2^{s_0}\right)^{\alpha}(\mu^+)^{m-1}(s_1-s_0)}{\nu^2\omega^{\alpha}(s_1-s_0-1)^2\log 2}|B_R|\\
& \qquad \qquad +6m\left(1+(\mu^+)^{m}\right)\left(\frac{1+(s_1-s_0)\log 2}{(s_1-s_0-1)^2\left(\log2\right)^2}\right)\left(\frac{2^{s_1}}{\omega}\right)^2\left(\mu^+-\frac{\omega}{2^{s_0}}\right)^{2q-3}\left(\frac{2^{s_0}}{A}\right)^{\alpha\left(1-\frac{1}{a_2}\right)}|B_R|.
\end{aligned}
\end{equation*}
On the other hand, 
\begin{equation*}
\left|\left\{x\in B_{R}:u^m(x,t)>\mu^+-\frac{\omega}{2^{s_1}}\right\}\right|\leq \left|\left\{x\in B_{(1-\nu)R}:u^m(x,t)>\mu^+-\frac{\omega}{2^{s_1}}\right\}\right|+n\nu|B_R|.
\end{equation*}
Therefore,
\begin{equation}\label{eq-aligned-after-integration-for-the-time-from-t-0-to-0t-ast-and-simplification-2}
\begin{aligned}
&\left|\left\{x\in B_{R}:u^m(x,t)>\mu^+-\frac{\omega}{2^{s_1}}\right\}\right|\\
&\qquad \leq \left(\frac{s_1-s_0}{s_1-s_0-1}\right)^2\left(\frac{1-\rho}{1-\frac{\rho}{2}}\right)|B_R|+n\nu|B_R|+\frac{8m\left(2^{s_0}\right)^{\alpha}(\mu^+)^{m-1}(s_1-s_0)}{\nu^2\omega^{\alpha}(s_1-s_0-1)^2\log 2}|B_R|\\
& \qquad \qquad +6m\left(1+(\mu^+)^{m}\right)\left(\frac{1+(s_1-s_0)\log 2}{(s_1-s_0-1)^2\left(\log2\right)^2}\right)\left(\frac{2^{s_1}}{\omega}\right)^2\left(\mu^+-\frac{\omega}{2^{s_0}}\right)^{2q-3}\left(\frac{2^{s_0}}{A}\right)^{\alpha\left(1-\frac{1}{a_2}\right)}|B_R|.
\end{aligned}
\end{equation}
To prove the lemma, we choose $\nu$ so small that $n\nu\leq\frac{1}{4}\rho^2$ and $s_1$ so large that
\begin{equation*}
\left(\frac{s_2-s_1}{s_2-s_1-1}\right)^2\leq \left(1-\frac{1}{2}\rho\right)(1+\rho) \qquad \mbox{and} \qquad \frac{8m\left(2^{s_0}\right)^{\alpha}(\mu^+)^{m-1}(s_1-s_0)}{\nu^2\omega^{\alpha}(s_1-s_0-1)^2\log 2}\leq \frac{1}{4}\rho^2.
\end{equation*}
For such $\nu$ and $s_1$, we can also choose $A$ in \eqref{eq-construction-of-cylinder-with-some-constant-A-which-is-bigger} sufficiently large that
\begin{equation*}
6m\left(1+(\mu^+)^{m}\right)\left(\frac{1+(s_1-s_0)\log 2}{(s_1-s_0-1)^2\left(\log2\right)^2}\right)\left(\frac{2^{s_1}}{\omega}\right)^2\left(\mu^+-\frac{\omega}{2^{s_0}}\right)^{2q-3}\left(\frac{2^{s_0}}{A}\right)^{\alpha\left(1-\frac{1}{a_2}\right)}\leq \frac{1}{4}\rho^2.
\end{equation*}
Then, the inequality \eqref{eq-in-the-second-lemma-for-the-second-alternative-for-Holder-estiamt} holds and the lemma follows.
\end{proof}
\begin{corollary}[cf. Corollary 7.1 of Chapter III in \cite{Di}]\label{cor-cf-Corollary-7-1-of-Chapter-III-in-cite-Di}
For all $t\in \left(-\frac{R^2}{2a_0^{\alpha}},0\right)$,
\begin{equation}\label{eq-in-the-second-lemma-for-the-second-alternative-for-Holder-estiamt-in-a-0---ahpah}
\left|\left\{x\in B_R:\,\, u^m(x,t)>\mu^+-\frac{\omega}{2^{s_1}}\right\}\right|\leq\left(1-\left(\frac{\rho}{2}\right)^2\right)|B_R|..
\end{equation}
\end{corollary}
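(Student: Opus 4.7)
The plan is to apply Lemma \ref{lem-main-tools-for=the-second-altermativce-estimatieion-3452} (together with its precursor Lemma \ref{eq-lem-the-first-lemma-when-the-hypothesis-is-violated-124e}) not just to the sub-cylinder anchored at the vertex $t=0$, but to every time-translate $B_R\times(t_0-\theta_0^{-\alpha}R^2,\,t_0)$ with $t_0\in\bigl(-R^2/(2a_0^{\alpha}),\,0\bigr)$. This is legitimate because the standing hypothesis \eqref{eq-condition-1-of-second-alternatives-for-Holder} of the second alternative was imposed on \emph{every} sub-cylinder of the form $Q(R,\theta_0^{-\alpha}R^2)$ contained in $Q(R,a_0^{-\alpha}R^2)$, and the arguments behind Lemma \ref{eq-lem-the-first-lemma-when-the-hypothesis-is-violated-124e} and Lemma \ref{lem-main-tools-for=the-second-altermativce-estimatieion-3452} are invariant under translation in the time variable.

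Concretely, I would fix such a $t_0$ and run the following two-step procedure. First, the time-shifted version of Lemma \ref{eq-lem-the-first-lemma-when-the-hypothesis-is-violated-124e} yields a level
\[
t^{\ast}(t_0)\in\bigl[t_0-\theta_0^{-\alpha}R^2,\ t_0-\tfrac{\rho}{2}\theta_0^{-\alpha}R^2\bigr]
\]
at which the set $\{u^m>\mu^+-\omega/2^{s_0}\}$ occupies at most a $\frac{1-\rho}{1-\rho/2}$ fraction of $B_R$. Second, I replay the logarithmic-test-function argument of Lemma \ref{lem-main-tools-for=the-second-altermativce-estimatieion-3452} on the interval $[t^{\ast}(t_0),\,t_0]$; this is word-for-word the proof already given, with the supremum in \eqref{eq-aligned-after-integration-for-the-time-from-t-0-to-0t-ast-and-simplification-1} taken over $(t^{\ast}(t_0),\,t_0)$ instead of $(t^{\ast},0)$. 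The resulting pointwise-in-time measure bound evaluated at the right endpoint $t=t_0$ is precisely \eqref{eq-in-the-second-lemma-for-the-second-alternative-for-Holder-estiamt-in-a-0---ahpah}.

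The only genuine obstacle is the geometric bookkeeping: the shifted cylinder $B_R\times(t_0-\theta_0^{-\alpha}R^2,\,t_0)$ must lie inside the ambient $Q(R,a_0^{-\alpha}R^2)$ for every admissible $t_0$, otherwise \eqref{eq-condition-1-of-second-alternatives-for-Holder} cannot be invoked on it. Since $|t_0|<a_0^{-\alpha}R^2/2$, containment reduces to $\theta_0^{-\alpha}\le a_0^{-\alpha}/2$, equivalently $(A/2^{s_0})^{\alpha}\ge 2$. I would therefore enlarge the free constant $A$ from \eqref{eq-construction-of-cylinder-with-some-constant-A-which-is-bigger} so that $A\ge 2^{s_0+1/\alpha}$. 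This is compatible with the previously imposed lower bound $A\ge 2^{s_0}$ and with the selection of $A$ at the end of the proof of Lemma \ref{lem-main-tools-for=the-second-altermativce-estimatieion-3452}, it preserves \eqref{eq-condition-between-R-and-theta-alpha}, and it leaves the small parameters $\rho$, $\nu$, and $s_1$ unchanged. With $A$ so enlarged, the translated application of Lemma \ref{lem-main-tools-for=the-second-altermativce-estimatieion-3452} yields the corollary for every $t_0$ in the stated range.
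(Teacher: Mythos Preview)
Your proposal is correct and is precisely the standard translation argument behind the cited Corollary 7.1 of Chapter III in \cite{Di}; the paper itself supplies no independent proof here and relies on that citation, so your write-up fills in exactly what the reference would provide. The one extra ingredient you identify---enlarging $A$ so that $(A/2^{s_0})^{\alpha}\ge 2$ to guarantee containment of every translated sub-cylinder---is the only additional constraint needed, and it is harmless for the later choice $A=2^{s^{\ast}}$ made in Lemma~\ref{lemma-complete-assumption-for-the-second-alternative}.
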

By an argument similar to the proof of Lemma 8.1 of Chapter III in \cite{Di}, we have the following lemma. 
\begin{lem}\label{lemma-complete-assumption-for-the-second-alternative}
If \eqref{eq-condition-1-of-first-alternatives-for-Holder} is violated,  for every $\nu_{\ast}\in(0,1)$, there exists a number $s^{\ast}>s_1+1>s_0$ independent of $\omega$ and $R$ such that
\begin{equation*}
\left|\left\{(x,t)\in Q\left(R,\frac{1}{2}a_0^{-\alpha}R^2\right): u^m(x,t)>\mu^+-\frac{\omega}{2^{s^{\ast}}}\right\}\right|\leq \nu_{\ast}\left|Q\left(R,\frac{1}{2}a_0^{-\alpha}R^2\right)\right|
\end{equation*}
with the constant $A=2^{s^{\ast}}$.
\end{lem}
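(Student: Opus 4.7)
The plan is to iterate the single-slice measure reduction of Corollary~\ref{cor-cf-Corollary-7-1-of-Chapter-III-in-cite-Di} into genuine space-time smallness on $Q^{\ast} := Q(R, \tfrac12 a_0^{-\alpha}R^2)$, in the spirit of Lemma~8.1, Chapter~III of \cite{Di}, with the additional chemotaxis perturbation absorbed via Lemma~\ref{lemma-control-the-last-term-by-A-l-r-measure=at=t}. Introduce the shrinking levels and their excess sets
\[
k_s := \mu^{+} - \frac{\omega}{2^{s}}, \qquad A_s(t) := \{x \in B_R : u^{m}(x,t) > k_s\}, \qquad B_s := \{(x,t) \in Q^{\ast} : k_s < u^{m} \leq k_{s+1}\},
\]
for $s = s_1, s_1+1, \dots, s^{\ast}-1$. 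Since $k_s \geq k_{s_1}$, Corollary~\ref{cor-cf-Corollary-7-1-of-Chapter-III-in-cite-Di} gives $|B_R \setminus A_s(t)| \geq (\rho/2)^{2}|B_R|$ uniformly in $s$ and $t$.

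First, I would apply the De Giorgi isoperimetric inequality on each time slice to the pair of levels $k_s < k_{s+1}$: because the complement of $A_s(t)$ is bounded below by a fixed fraction of $|B_R|$,
\[
\frac{\omega}{2^{s+1}} |A_{s+1}(t)| \leq \frac{C_n R}{\rho^{2}} \int_{A_s(t) \setminus A_{s+1}(t)} |\nabla u^{m}(x,t)| \, dx.
\]
Integrating in $t$ over $Q^{\ast}$ and applying Cauchy--Schwarz then gives
\[
\Bigl(\frac{\omega}{2^{s+1}}\Bigr)^{2} |\{u^{m} > k_{s+1}\} \cap Q^{\ast}|^{2} \leq \frac{C_n R^{2}}{\rho^{4}} \|\nabla(u^{m}-k_s)_{+}\|_{L^{2}(Q^{\ast})}^{2} \, |B_s|.
\]

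Second, I would bound $\|\nabla(u^{m}-k_s)_{+}\|_{L^{2}(Q^{\ast})}^{2}$ by applying Corollary~\ref{cor-Loc-energy-estimate-with-constant-L03} to $(u^{m} - k_s)_{+}$ with a smooth cut-off that equals $1$ on $Q^{\ast}$ and vanishes on the parabolic boundary of $Q(R, a_0^{-\alpha}R^{2})$. Using $(u^{m}-k_s)_{+} \leq \omega/2^{s}$, $|\nabla\eta|^{2} \lesssim R^{-2}$ and $|\eta\eta_t| \lesssim a_0^{\alpha}R^{-2}$, the non-chemotaxis part of the right-hand side of the energy estimate is of order $C(\omega/2^{s})^{2} |Q^{\ast}|/R^{2}$. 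Substituting into the squared De Giorgi estimate, the factors $(\omega/2^{s})^{2}$ and $(\omega/2^{s+1})^{2}$ cancel up to a universal constant, and summing the disjoint slabs $\{B_s\}_{s=s_1}^{s^{\ast}-1}$ via $\sum_{s} |B_s| \leq |Q^{\ast}|$ yields
\[
|\{u^{m} > k_{s^{\ast}}\} \cap Q^{\ast}|^{2} \leq \frac{C}{\rho^{4}(s^{\ast}-s_1)} |Q^{\ast}|^{2} + \textnormal{(chemotaxis residual)}.
\]
Choosing $s^{\ast}-s_1 \geq C/(\rho^{4} \nu_{\ast}^{2})$ makes the main term at most $\tfrac12 \nu_{\ast}^{2} |Q^{\ast}|^{2}$.

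The main obstacle, and the one genuinely new step beyond the porous medium analogue, is the perturbation $\int_{Q^{\ast}} u^{2(q-1)} \eta^{2} |\nabla v|^{2} \, dx dt$ that Corollary~\ref{cor-Loc-energy-estimate-with-constant-L03} contributes. By Lemma~\ref{lemma-control-the-last-term-by-A-l-r-measure=at=t} this term is bounded by $I \bigl(\int |A_s(t)|^{\tilde{r}/\tilde{q}} \, dt\bigr)^{2(1+\kappa)/\tilde{r}}$, and after the time rescaling $z = \theta_0^{\alpha}t$ used in the proof of Lemma~\ref{lem-first-alternative-for-Holder-estimate}, combined with the intrinsic relation \eqref{eq-condition-between-R-and-theta-alpha}, it inherits a factor $(2^{s_0}/A)^{\alpha(1-1/a_2)}$ of exactly the type that already appears in \eqref{eq-aligned-after-integration-for-the-time-from-t-0-to-0t-ast-and-simplification-1}. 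With the assignment $A = 2^{s^{\ast}}$ this factor becomes $2^{-(s^{\ast}-s_0)\alpha(1-1/a_2)}$, which tends to $0$ as $s^{\ast} \to \infty$; hence enlarging $s^{\ast}$ further if necessary absorbs the chemotaxis residual into the remaining $\tfrac12 \nu_{\ast}^{2} |Q^{\ast}|^{2}$. The resulting threshold $s^{\ast}$ depends only on $\nu_{\ast}$, $\rho$, $n$, $m$, $q$ and the universal constants from Section~\ref{sec-important-estimates}, and is therefore independent of $\omega$ and $R$ as asserted.
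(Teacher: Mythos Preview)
Your overall strategy is exactly what the paper intends: it gives no proof beyond the pointer to Lemma~8.1, Chapter~III of \cite{Di}, and your De~Giorgi slice estimate summed against Corollary~\ref{cor-Loc-energy-estimate-with-constant-L03} is that argument. Two technical points need correction, however.

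First, the cut-off you describe is impossible: $Q^{\ast}=Q(R,\tfrac12 a_0^{-\alpha}R^2)$ and $Q(R,a_0^{-\alpha}R^2)$ share the same spatial ball $B_R$, so no smooth $\eta$ can equal $1$ on $B_R$ and vanish on $\partial B_R$. The standard remedy is to take the spatial cut-off between $B_R$ and $B_{2R}$ (this is harmless because $\mu^{+}$ is the supremum over $Q(2R,R^{2-\epsilon})$, so $(u^m-k_s)_+\le \omega/2^{s}$ persists on $B_{2R}$), and to cut in time between $-a_0^{-\alpha}R^2$ and $-\tfrac12 a_0^{-\alpha}R^2$.

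Second, and more substantively, your mechanism for absorbing the chemotaxis residual is misidentified. The factor $(2^{s_0}/A)^{\alpha(1-1/a_2)}$ in \eqref{eq-aligned-after-integration-for-the-time-from-t-0-to-0t-ast-and-simplification-1} arose because that lemma works on a cylinder of temporal length $\sim\theta_0^{-\alpha}R^2$ and then compares $\theta_0$ with $a_0$. Here the working cylinder already has temporal length $\sim a_0^{-\alpha}R^2$, so Lemma~\ref{lemma-control-the-last-term-by-A-l-r-measure=at=t} yields only the factor $R^{n\kappa}a_0^{-\alpha(1-1/a_2)}$, which the intrinsic relation \eqref{eq-condition-between-R-and-theta-alpha--1} bounds by $1$ but which does \emph{not} tend to $0$ as $s^{\ast}\to\infty$. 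The correct accounting is that after dividing by $(\omega/2^{s+1})^{2}$ and summing over $s_1\le s\le s^{\ast}-1$, the chemotaxis contribution normalized by $|Q^{\ast}|^{2}$ is bounded by
\[
\frac{C}{\rho^{4}}\,R^{n\kappa}\,a_0^{\alpha/a_2-2}
\;\le\;\frac{C}{\rho^{4}}\,R^{\,n\kappa-\epsilon\left(\frac{2m}{m-1}-\frac{1}{a_2}\right)}
\;\le\;\frac{C}{\rho^{4}},
\]
using exactly the smallness of $\epsilon$ already fixed in the proof of Lemma~\ref{lem-first-alternative-for-Holder-estimate}. Hence the chemotaxis residual is of the \emph{same} order as the porous-medium main term, and both are made $\le\nu_{\ast}^{2}$ simultaneously by the division by $(s^{\ast}-s_1)$ after summation. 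With this correction your choice $s^{\ast}-s_1\gtrsim C/(\rho^{4}\nu_{\ast}^{2})$ suffices and is indeed independent of $\omega$ and $R$.
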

By Lemma \ref{lemma-complete-assumption-for-the-second-alternative}, $\nu_{\ast}$ decided a level and a cylinder so that the measure of the set where $u^m$ is above such a level can be smaller than $\nu_{\ast}$ on that particular cylinder. Hence, for sufficiently small number $\nu_{\ast}$ we have a similar assumption to the one in Lemma \ref{lem-first-alternative-for-Holder-estimate}. Therefore, by an argument similar to the proof of Lemma \ref{lem-first-alternative-for-Holder-estimate} with 
\begin{equation*}
\left(u^m-\left(\mu^+-\frac{\omega}{2^{s^{\ast}}}\right)\right)_+,
\end{equation*}
we can have the following result
\begin{lem}\label{lem-powerful-hypothesis-for-teh-secind-alternatives-o}
The number $\nu_{\ast}$ can be chosen so that 
\begin{equation*}
u^{m}(x,t)\leq \mu^{+}-\frac{\omega}{2^{s^{\ast}+1}} \qquad \mbox{a.e. $Q\left(\frac{R}{2},\frac{1}{2}a_0^{-\alpha}\left(\frac{R}{2}\right)^2\right)$}.
\end{equation*}
\end{lem}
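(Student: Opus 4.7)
The plan is to run a De Giorgi iteration that parallels the proof of Lemma \ref{lem-first-alternative-for-Holder-estimate}, but applied to the truncation from above $(u^m-l_i)_+$ on the cylinder $Q(R,\tfrac{1}{2}a_0^{-\alpha}R^2)$ supplied by Lemma \ref{lemma-complete-assumption-for-the-second-alternative} (where $A=2^{s^{\ast}}$ fixes $a_0 = \omega/2^{s^{\ast}}$). First I would fix the decreasing families
$$R_i = \frac{R}{2}+\frac{R}{2^{i+1}},\qquad l_i = \mu^+ - \frac{\omega}{2^{s^{\ast}+1}} - \frac{\omega}{2^{i+s^{\ast}+1}},$$
together with the nested cylinders $Q_i = Q(R_i,\tfrac{1}{2}a_0^{-\alpha}R_i^2)$ and standard cut-offs $\eta_i$ satisfying $|\nabla\eta_i|\le 2^{i+2}/R$ and $(\eta_i)_t\le 2^{2(i+2)}a_0^{\alpha}/R^2$, vanishing on the parabolic boundary of $Q_i$ and equal to one on $Q_{i+1}$.

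Next I would apply Corollary \ref{cor-Loc-energy-estimate-with-constant-L03} with the $+$ branch at level $k=l_i$. To convert the weighted term on the left into something comparable to $\|(u^m-l_i)_+ \eta_i\|_{L^2}^2$, I would use the elementary lower bound
$$\int_0^{(u^m-l_i)_+}(l_i+\xi)^{\frac{1}{m}-1}\xi\,d\xi \;\ge\; (\mu^+)^{-\alpha}\,\tfrac{1}{2}(u^m-l_i)_+^{2},$$
which follows because $l_i+\xi\le \mu^+$ and $\alpha = 1-1/m\in(0,1)$. Performing the time rescaling $z=a_0^{\alpha}t$ used in the first-alternative proof converts $Q_i$ to a non-degenerate cylinder, after which the Sobolev-type inequality \eqref{eq-weighted-sobolev-inequality-for-holder} yields a recursion of exactly the same shape as \eqref{eq-recursion-relation-between-Z-n-s-1} for $Z_i = \int|\{v(\cdot,z)>l_i\}\cap B_{R_i}|\,dz$.

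Introducing the normalized quantities $X_i = Z_i/|Q(R_i,R_i^2)|$ and $Y_i = |B_{R_i}|^{-1}\bigl(\int|\{v(\cdot,z)>l_i\}\cap B_{R_i}|^{\tilde r/\tilde q}dz\bigr)^{2/\tilde r}$, and choosing $\epsilon$ in \eqref{eq-condition-between-R-and-theta-alpha--1} so small that $\epsilon<n\kappa\bigl(\tfrac{2m}{m-1}-\tfrac{1}{a_2}\bigr)^{-1}$, the $a_0^{-\alpha(2m/(m-1)-1/a_2)}R^{n\kappa}$ factor gets absorbed exactly as in \eqref{eq-first-for-X-i-Y-i}, and I arrive at the coupled recursion
$$L_{i+1}\le C\,16^{i(1+\kappa)}L_i^{1+\sigma},\qquad L_i=X_i+Y_i^{1+\kappa},\qquad \sigma=\min\{\kappa,\tfrac{2}{n+2}\},$$
as in \eqref{eq-alinged-the-first-stpe-of-union-of-two-iteration-inequalities}. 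The conclusion of Lemma \ref{lemma-complete-assumption-for-the-second-alternative} makes $X_0$ proportional to $\nu_{\ast}$ (and an analogous H\"older argument handles $Y_0$), so selecting $\nu_{\ast}$ small enough that $L_0$ falls below the threshold in \eqref{initial-condition-of-X-i-and-Y-i-iterations-1} triggers fast geometric decay, forcing $L_i\to 0$ and hence $(u^m-(\mu^+-\omega/2^{s^{\ast}+1}))_+\equiv 0$ on $Q(R/2,\tfrac{1}{2}a_0^{-\alpha}(R/2)^2)$.

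The main obstacle I anticipate is that the weight $(l_i+\xi)^{-\alpha}$ is bounded rather than singular at $\xi=0$, so the intrinsic scaling does not automatically supply the same matching exponent on $(u^m-l_i)_+$ that the $-$ case enjoyed; the compensating factor $(\mu^+)^{-\alpha}$ I used has to be absorbed consistently, and tracking it alongside the $(\mu^+)^{m-1}$-type growth inherited from Lemma \ref{lem-main-tools-for=the-second-altermativce-estimatieion-3452} must be done so that $A=2^{s^{\ast}}$ can still be chosen large purely in terms of the data. This bookkeeping, rather than any new analytic ingredient, is the delicate point.
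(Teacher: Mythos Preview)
Your proposal is correct and follows exactly the route the paper indicates: the paper does not give a separate proof of this lemma but simply says it follows ``by an argument similar to the proof of Lemma~\ref{lem-first-alternative-for-Holder-estimate}'' applied to $(u^m-(\mu^+-\omega/2^{s^{\ast}}))_+$ on the cylinder supplied by Lemma~\ref{lemma-complete-assumption-for-the-second-alternative}. Your sketch faithfully carries this out, and the technical point you flag about the bounded weight $(l_i+\xi)^{-\alpha}$ is indeed just bookkeeping, handled by the elementary bounds $(\mu^+)^{-\alpha}\le (l_i+\xi)^{-\alpha}\le (\mu^+-\omega/2^{s^\ast})^{-\alpha}$ with $\mu^+-\omega/2^{s^\ast}\ge \omega(1-2^{-s^\ast})$.
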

Combining Lemma \ref{lem-first-alternative-for-Holder-estimate} with Lemma \ref{lem-powerful-hypothesis-for-teh-secind-alternatives-o}, we can obtain the following Osillation Lemma.
\begin{lem}[Oscillation Lemma]\label{lem-Oscillation-Lemma}
There exists constant $0<\lambda^{\ast}<1$ such that if 
\begin{equation*}
\osc_{Q_{R}}u^m=\omega=\mu^+-\mu^-,
\end{equation*}
then
\begin{equation*}
\osc_{Q\left(\frac{R}{2},\frac{1}{2}a_0^{-\alpha}\left(\frac{R}{2}\right)^2\right)}u^m\leq \lambda^{\ast}\omega.
\end{equation*}
\end{lem}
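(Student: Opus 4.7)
The plan is to combine the two alternatives developed earlier in this section by dichotomizing on whether the measure-theoretic hypothesis \eqref{eq-condition-1-of-first-alternatives-for-Holder} of Lemma \ref{lem-first-alternative-for-Holder-estimate} is realized on some sub-cylinder of intrinsic form $Q(R,\theta_0^{-\alpha}R^2)$ sitting inside $Q(R,a_0^{-\alpha}R^2)$, or fails on every such sub-cylinder.

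In the first case, Lemma \ref{lem-first-alternative-for-Holder-estimate} applies and provides the pointwise lower bound $u^m(x,t)\geq \mu^-+\omega/2^{s_0+1}$ on the halved cylinder. Combined with the trivial upper bound $u^m\leq \mu^+$ on the same set, this yields $\osc u^m\leq (1-2^{-(s_0+1)})\omega$. In the second case, the measure inequality \eqref{eq-condition-1-of-second-alternatives-for-Holder} holds on every such sub-cylinder, so I would invoke in sequence Lemma \ref{eq-lem-the-first-lemma-when-the-hypothesis-is-violated-124e}, Lemma \ref{lem-main-tools-for=the-second-altermativce-estimatieion-3452}, Corollary \ref{cor-cf-Corollary-7-1-of-Chapter-III-in-cite-Di}, Lemma \ref{lemma-complete-assumption-for-the-second-alternative} with the choice $A=2^{s^{\ast}}$, and finally Lemma \ref{lem-powerful-hypothesis-for-teh-secind-alternatives-o}, to deduce the pointwise upper bound $u^m\leq \mu^+-\omega/2^{s^{\ast}+1}$ a.e.\ on $Q(R/2,(1/2)a_0^{-\alpha}(R/2)^2)$. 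This gives $\osc u^m\leq (1-2^{-(s^{\ast}+1)})\omega$ on that cylinder.

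Since $s^{\ast}>s_0$, choosing $\lambda^{\ast}:=1-2^{-(s^{\ast}+1)}\in(0,1)$ yields a single universal contraction factor dominating both cases, and the stated conclusion $\osc_{Q(R/2,(1/2)a_0^{-\alpha}(R/2)^2)}u^m\leq \lambda^{\ast}\omega$ follows. The principal obstacle I anticipate is reconciling the cylinders on which the two alternatives deliver their conclusions: the first alternative's conclusion is valid on $Q(R/2,\theta_0^{-\alpha}(R/2)^2)$, whose time extent $(2^{s_0}/\omega)^{\alpha}(R/2)^2$ need not match the target $(1/2)(2^{s^{\ast}}/\omega)^{\alpha}(R/2)^2$ since $s^{\ast}$ is typically much larger than $s_0$. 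Following the DiBenedetto framework of \cite{Di}, the remedy is to stack translated copies of the intrinsic sub-cylinder on which Lemma \ref{lem-first-alternative-for-Holder-estimate} is realized across the whole time interval $(-(1/2)a_0^{-\alpha}(R/2)^2,0)$, propagating the lower bound $u^m\geq \mu^-+\omega/2^{s_0+1}$ forward in time layer by layer. Making this time-propagation step rigorous, while keeping the dependence of $\rho$, $s_0$, $s^{\ast}$, and $A$ consistent across the stacked applications, is the main technical burden I expect.
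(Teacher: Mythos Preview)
Your proposal is correct and follows the same two-alternative dichotomy the paper invokes; indeed the paper gives no proof beyond the sentence ``Combining Lemma~\ref{lem-first-alternative-for-Holder-estimate} with Lemma~\ref{lem-powerful-hypothesis-for-teh-secind-alternatives-o}, we can obtain the following Oscillation Lemma.'' You in fact go further than the paper by flagging the cylinder mismatch between $Q(R/2,\theta_0^{-\alpha}(R/2)^2)$ and the target $Q(R/2,\tfrac12 a_0^{-\alpha}(R/2)^2)$ and by pointing to the DiBenedetto stacking/time-propagation mechanism to bridge it, a step the paper leaves entirely implicit.
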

\begin{thm}[H\"older estimates]\label{thm-Holder-estimates-for-solution-u}
There exists constant $\lambda^{\star}>1$ and $\beta\in(0,1)$ that can be determined a priori only in terms of the data, such that for all the cylinders
\begin{equation*}
\osc_{Q\left(r,\frac{1}{2}a_0^{-\alpha}r^2\right)}u^m\leq \lambda^{\star}\omega\left(\frac{r}{R}\right)^{\beta} \qquad \left(0<r\leq R\right).
\end{equation*}
\end{thm}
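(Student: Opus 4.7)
The approach is the standard DiBenedetto-type propagation of oscillation: iterate the Oscillation Lemma (Lemma~\ref{lem-Oscillation-Lemma}) along a geometric sequence of intrinsically scaled cylinders and convert the geometric decay into a power-like modulus of continuity. After translating $(x_0,t_0)$ to the origin, I set $R_0=R$, $\omega_0=\omega$, and recursively
\begin{equation*}
R_{n+1}=c_0 R_n,\qquad \omega_{n+1}=\max\left\{\lambda^{\ast}\omega_n,\,A\,R_{n+1}^{\epsilon/\alpha}\right\},\qquad a_n=\omega_n/A,
\end{equation*}
where $c_0\in(0,1/2)$ will be fixed below and $\lambda^{\ast}$, $A$ are as in Section~\ref{sec-holder-estimates}. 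The second term in the max corresponds exactly to the threshold \eqref{eq-condition-between-R-and-theta-alpha--1}; when it is active, the intrinsic hypothesis of the oscillation lemma cannot be verified, but the desired bound follows for free.

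\textbf{Inductive step.} The inductive claim is $\osc_{Q_n}u^m\leq \omega_n$ with $Q_n=Q(R_n,\tfrac12 a_n^{-\alpha}R_n^2)$. If $\omega_n^{\alpha}>A^{\alpha}R_n^{\epsilon}$ the intrinsic hypothesis \eqref{eq-condition-between-R-and-theta-alpha--1} holds at scale $R_n$, so Lemma~\ref{lem-Oscillation-Lemma} applied with $(\omega,R)$ replaced by $(\omega_n,R_n)$ yields
\begin{equation*}
\osc_{Q\!\left(R_n/2,\,\frac12 a_n^{-\alpha}(R_n/2)^2\right)}u^m\leq \lambda^{\ast}\omega_n.
\end{equation*}
To pass this bound to $Q_{n+1}$, I need the containment $Q_{n+1}\subset Q(R_n/2,\tfrac12 a_n^{-\alpha}(R_n/2)^2)$. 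Since $\omega_{n+1}\geq\lambda^{\ast}\omega_n$ forces $a_{n+1}^{-\alpha}\leq (\lambda^{\ast})^{-\alpha}a_n^{-\alpha}$, the containment reduces to $c_0^2(\lambda^{\ast})^{-\alpha}\leq 1/4$, i.e.\ $c_0\leq\tfrac12(\lambda^{\ast})^{\alpha/2}$, which fixes $c_0$. In the complementary case $\omega_n\leq A\,R_n^{\epsilon/\alpha}$, the claim $\omega_{n+1}\geq \omega_n$ is automatic since the oscillation cannot increase on a smaller cylinder.

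\textbf{Unwinding the recursion.} Iteration gives $R_n=c_0^n R$ and $\omega_n\leq\max\{(\lambda^{\ast})^n\omega,\,A\,R_n^{\epsilon/\alpha}\}$. Choose
\begin{equation*}
\beta=\min\left\{\frac{\log(1/\lambda^{\ast})}{\log(1/c_0)},\;\frac{\epsilon}{\alpha}\right\}\in(0,1),
\end{equation*}
so that both terms are bounded by $C\,\omega\,(R_n/R)^{\beta}$ with $C$ depending only on the data. For arbitrary $r\in(0,R]$, pick $n$ with $R_{n+1}<r\leq R_n$; since $Q(r,\tfrac12 a_0^{-\alpha}r^2)\subset Q_n$ (once $A$ is taken large enough that $a_0^{-\alpha}\leq a_n^{-\alpha}$, which holds because $\omega_n\leq\omega_0$), the monotonicity of the oscillation yields
\begin{equation*}
\osc_{Q(r,\,\frac12 a_0^{-\alpha}r^2)}u^m\leq \omega_n\leq C\,\omega\,(R_n/R)^{\beta}\leq C c_0^{-\beta}\,\omega\,(r/R)^{\beta},
\end{equation*}
which is the stated bound with $\lambda^{\star}=Cc_0^{-\beta}$.

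\textbf{Main obstacle.} The delicate point is the intrinsic scaling bookkeeping in the inductive step: because $\alpha=1-1/m>0$, shrinking $\omega$ \emph{enlarges} the time extent $a_n^{-\alpha}R_n^2$ of the intrinsic cylinder, so a naive halving of the radius does not preserve containment of the cylinders on which the oscillation decay was proved. Forcing $c_0<\tfrac12(\lambda^{\ast})^{\alpha/2}$ is what absorbs this loss. Simultaneously, the threshold $\omega_n^{\alpha}>A^{\alpha}R_n^{\epsilon}$ required by the two-alternative machinery of Section~\ref{sec-holder-estimates} may fail at some step; handling this through the $A R_{n+1}^{\epsilon/\alpha}$ branch of the recursion, and tuning $\beta$ so that both regimes produce the same Hölder exponent, is the second ingredient that has to be orchestrated carefully.
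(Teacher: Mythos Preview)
Your argument is correct and follows the same overall strategy as the paper: iterate the Oscillation Lemma (Lemma~\ref{lem-Oscillation-Lemma}) and convert the resulting geometric decay into a power modulus. The paper's own proof is much terser: it simply claims, by induction on $k$, that
\begin{equation*}
\osc_{Q\left(R/2^k,\,\tfrac{1}{2}a_0^{-\alpha}(R/2^k)^2\right)}u^m\leq (\lambda^{\ast})^k\omega,
\end{equation*}
keeping the \emph{initial} intrinsic parameter $a_0=\omega/A$ fixed throughout, and then for $r\in(0,R]$ picks the integer $k$ with $R/2^{k+1}<r\leq R/2^k$ to conclude with $\beta=-\log_2\lambda^{\ast}$ and $\lambda^{\star}=1/\lambda^{\ast}$. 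In particular, the paper neither updates the intrinsic scale at each step nor revisits the threshold \eqref{eq-condition-between-R-and-theta-alpha--1} along the iteration. Your version handles both points explicitly: you refresh $a_n=\omega_n/A$, enforce nesting of successive intrinsic cylinders through the choice $c_0\leq \tfrac{1}{2}(\lambda^{\ast})^{\alpha/2}$, and absorb the possible failure of the threshold via the second branch $A R_{n+1}^{\epsilon/\alpha}$ of the recursion, at the cost of the possibly smaller exponent $\beta=\min\{\log(1/\lambda^{\ast})/\log(1/c_0),\,\epsilon/\alpha\}$. What you gain is an iteration that closes self-consistently; what the paper gains is brevity, leaving exactly the bookkeeping issues you flag as the ``main obstacle'' implicit.
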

\begin{proof}
The proof is very similar to the proof of Theorem 4.10 of \cite{KL2}. For future references we will sketch the proof of the H\"older estimates. Let $k$ be positive integer. By the Oscillation Lemma (Lemma \ref{lem-Oscillation-Lemma}), we get
\begin{equation*}
\osc_{Q\left(\frac{R}{2^k},\frac{1}{2}a_0^{-\alpha}\left(\frac{R}{2^k}\right)^2\right)}u^m\leq \left(\lambda^{\ast}\right)^k\omega, \qquad \left(\lambda^{\ast}<1\right).
\end{equation*}
Let $0<r\leq R$ be fixed. Then, there is a non-negative integer $k$ such that
\begin{equation*}
\frac{R}{2^{k+1}}< r\leq \frac{R}{2^{k}}.
\end{equation*}
This immediately implies the inequalities
\begin{equation*}
k\leq -\log_2\left(\frac{r}{R}\right)<k+1 \qquad \mbox{and} \qquad \left(\lambda^{\ast}\right)^k\leq \frac{1}{\lambda^{\ast}}\left(\lambda^{\ast}\right)^{-\log_2\left(\frac{r}{R}\right)}=\frac{1}{\lambda^{\ast}}\left(\frac{r}{R}\right)^{-\log_2\lambda^{\ast}}.
\end{equation*}
Note that
\begin{equation*}
\left(1-\frac{1}{2^{s_{0}+1}}\right)\leq \lambda^{\ast}\leq \left(1-\frac{1}{2^{s^{\ast}+1}}\right).
\end{equation*}
Hence
\begin{equation*}
\osc_{Q\left(\frac{R}{2^k},\frac{1}{2}a_0^{-\alpha}\left(\frac{R}{2^k}\right)^2\right)}u^m\leq \lambda^{\star}\omega\left(\frac{r}{R}\right)^{\beta}
\end{equation*}
where $\lambda^{\star}=\frac{1}{\lambda^{\ast}}>1$ and $0<\beta=-\log_2\lambda^{\ast}<1$. To complete the proof, we observe that the cylinder $Q\left(r,\frac{1}{2}a_0^{-\alpha}r^2\right)$ is included in $Q\left(\frac{R}{2^k},\frac{1}{2}a_0^{-\alpha}\left(\frac{R}{2^k}\right)^2\right)$.
\end{proof}

\section{Uniqueness}\label{sec-uniqueness}
\setcounter{equation}{0}
\setcounter{thm}{0}

In this section we will prove that under some conditions on the weak solutions $u$ and $v$ of \eqref{eq-cases-aligned-main-problem-of-Keller-Segel-System}, there exists  at most one weak solution of \eqref{eq-cases-aligned-main-problem-of-Keller-Segel-System} on $[0,\infty)$.  We first start with the well-known lemma.

\begin{lem}[cf. Theorem 6 in \cite{Wa}]\label{lem-cf-Thm-6-in-Wa}
Let $Q_r=B_r(0)\times(-r^2,0]$ be the parabolic cube. If
\begin{equation*}
w_t-\La w=f\in L^{p} \qquad \mbox{in $Q_1$},
\end{equation*}
then
\begin{equation*}
w_t, \,\, D^2w\in L^{p}(Q_{\frac{1}{2}})
\end{equation*}
and
\begin{equation*}
\left\|w_t\right\|_{L^{p}(Q_{\frac{1}{2}})}+\left\|D^2u\right\|_{L^{p}(Q_{\frac{1}{2}})}\leq C\left(\left\|f\right\|_{L^p(Q_1)}+\left\|w\right\|_{L^p(Q_1)}\right).
\end{equation*}
\end{lem}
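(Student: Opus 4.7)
The plan is the classical parabolic Calder\'on--Zygmund argument: localize with a smooth cutoff, represent the solution through the heat kernel, and invoke singular-integral theory. Concretely, I would pick $\eta\in C_0^{\infty}(Q_{3/4})$ with $\eta\equiv 1$ on $Q_{1/2}$ and consider $\tilde w:=\eta w$ extended by zero to $\R^n\times\R$. A direct computation gives
\begin{equation*}
\tilde w_t-\La\tilde w=\eta f+(\eta_t-\La\eta)\,w-2\nabla\eta\cdot\nabla w=:F\qquad\text{in }\R^n\times\R,
\end{equation*}
so that Duhamel's formula expresses $\tilde w$ as the convolution of $F$ with the Gauss--Weierstrass kernel $\Gamma$. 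Differentiating under the integral identifies $\p_t\tilde w$ and $D^2_x\tilde w$ as convolutions of $F$ against $\p_t\Gamma$ and $D^2_x\Gamma$, which are parabolic Calder\'on--Zygmund kernels (homogeneous of parabolic degree $-(n+2)$ and with vanishing mean on parabolic spheres).

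The second step is to apply the parabolic Calder\'on--Zygmund theorem, which gives boundedness of these convolution operators on $L^p(\R^{n+1})$ for every $1<p<\infty$. Since $\eta\equiv 1$ on $Q_{1/2}$, this produces
\begin{equation*}
\|w_t\|_{L^p(Q_{1/2})}+\|D^2w\|_{L^p(Q_{1/2})}\leq C\bigl(\|f\|_{L^p(Q_1)}+\|w\|_{L^p(Q_1)}+\|\nabla w\|_{L^p(Q_{3/4})}\bigr).
\end{equation*}

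The main obstacle is the spurious gradient term $\|\nabla w\|_{L^p(Q_{3/4})}$ coming from the commutator with the cutoff. My plan for absorbing it is the standard iteration on a nested family of cylinders $Q_{r_k}$ with $r_k=\tfrac12+2^{-k-1}$, combined with the Gagliardo--Nirenberg type interpolation
\begin{equation*}
\|\nabla w\|_{L^p(Q_r)}\leq \varepsilon\,\|D^2w\|_{L^p(Q_r)}+C_{\varepsilon}\,\|w\|_{L^p(Q_r)}
\end{equation*}
applied on each time slice and then integrated in $t$. Choosing $\varepsilon$ small, one absorbs the second-derivative contribution into the left-hand side at each step and sums a geometric series across the nested cylinders to conclude at the innermost radius $r=1/2$. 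This is essentially the scheme realized by Wang in \cite{Wa} (where, alternatively, a perturbation argument against solutions of the homogeneous heat equation is employed to the same effect); either route closes the estimate in the stated form.
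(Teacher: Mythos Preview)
The paper does not supply its own proof of this lemma; it is stated as a well-known result and attributed to Wang \cite{Wa}. So there is no in-house argument to compare against, only the cited reference.

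Your sketch is the classical route---cutoff, heat-kernel representation, parabolic Calder\'on--Zygmund theory for the convolution kernels $\p_t\Gamma$ and $D^2_x\Gamma$, followed by interpolation and a Giaquinta-type iteration over nested cylinders to absorb the commutator gradient term---and it is correct as outlined. One small point: the absorption step is not literally ``at each step'' but rather via the standard iteration lemma (showing $\Phi(r)\le \theta\,\Phi(R)+C(R-r)^{-2}A$ for $\theta<1$ and iterating over a geometric sequence of radii), which you allude to but could state more precisely.

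By contrast, Wang's paper \cite{Wa} takes a genuinely different, ``geometric'' route that avoids singular integrals entirely: one compares $w$ locally to the caloric replacement (the solution of the homogeneous heat equation with the same boundary data), uses interior estimates for caloric functions to control the replacement, and then runs a Calder\'on--Zygmund-type decomposition on the level sets of the maximal function of $D^2w$ and $f$ to bootstrap $L^p$ integrability. You correctly flag this alternative at the end of your proposal. The payoff of Wang's method is that it generalizes cleanly to operators with merely bounded measurable (or VMO) coefficients, where no explicit fundamental solution is available; your singular-integral approach is shorter and more direct for the constant-coefficient heat operator at hand, which is all the present paper needs.
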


We finish this work with stating the following result.
\begin{thm}[Uniqueness of weak solution]
Let $m>1$, $q>\max\left(\frac{m}{2}+1,2\right)$, $\gamma>0$. Assume that initial data $u_0\in L^1\cap L^{\infty}(\R^n)$ and $v_0\in L^1(\R^n)$ with $\nabla v_0\in L^{\infty}(\R^n)$ satisfy the following additional conditions
\begin{equation*}
u_0\in C^{\alpha}(\R^n), \qquad v_0\in C^{2,\alpha}(\R^n) \qquad \mbox{for some $0<\alpha<1$}.
\end{equation*}
If $u$ is a weak solution of \eqref{eq-cases-aligned-main-problem-of-Keller-Segel-System} satisfying the properties
\begin{equation*}
u_t\in L^1(0,T;L^1_{loc}(\R^n)), \qquad u(\cdot,t)\in C(\R^n) \quad \mbox{for a.e. $0<t<T$}
\end{equation*}
and
\begin{equation*}
u\in L^{q-1}(0,T;L^{\infty}(\R^n))\cap L^{q-m-1}(0,T;L^{\infty}(\R^n))\cap L^{q-m}(0,T;L^{\infty}(\R^n))\cap L^{m}(0,T;L^m(\R^n)).
\end{equation*}
In addition, we assume the following alternatives
\begin{enumerate} 
\item In the case of $1<m<2$ and in the case of $m\geq 2$ and $q\leq m+1$,
\begin{equation*}
u\in L^2\left(0,T; L^{\frac{2n}{n+2}}(\R^n)\right);
\end{equation*}
\item In the case of $m\geq 2$ and $q>m+1$,
\begin{equation*}
u\in L^{2q-m-1}(0,T;L^{\infty(\R^n)}).
\end{equation*}
\end{enumerate}
Then, the weak solution $(u,v)$ of \eqref{eq-cases-aligned-main-problem-of-Keller-Segel-System} on $[0,T)$ is unique.
\end{thm}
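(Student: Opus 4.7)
The plan is to establish an $L^1$-contraction between any two weak solution pairs $(u_1,v_1)$ and $(u_2,v_2)$ of \eqref{eq-cases-aligned-main-problem-of-Keller-Segel-System} sharing the initial data $(u_0,v_0)$, from which the uniqueness of $u$, and then (by linearity of the second equation) of $v$, will follow. The essential input from the previous sections is the H\"older continuity of $u^m$ furnished by Theorem \ref{thm-Holder-estimates-for-solution-u}, together with the regularity estimates of Lemma \ref{lem-Lemma-5-in-SK}; these allow the use of a regularized sign of $u_1^m-u_2^m$ as a test function and justify the ultimate passage to the limit.

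First, I would subtract the first equations for $u_1$ and $u_2$, write
$$\p_t(u_1-u_2)=\La(u_1^m-u_2^m)-\D\cdot\bigl[(u_1^{q-1}-u_2^{q-1})\D v_1+u_2^{q-1}\D(v_1-v_2)\bigr],$$
and test against $\mathrm{sgn}_\e(u_1^m-u_2^m)\,\psi_l$, where $\mathrm{sgn}_\e$ is a smooth non-decreasing approximation of the sign and $\psi_l$ is the spatial cut-off of Lemma~2.1. After integration by parts the diffusion contribution produces the non-positive term $-\int\mathrm{sgn}_\e'(u_1^m-u_2^m)|\D(u_1^m-u_2^m)|^2\psi_l\,dx$ plus a boundary-type piece of size $O(l^{-1})$ from Lemma~2.1. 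Since $q>2$, the mean value theorem gives $|u_1^{q-1}-u_2^{q-1}|\le(q-1)M^{q-2}|u_1-u_2|$ with $M=\max(\|u_1\|_\infty,\|u_2\|_\infty)$, while Lemma~\ref{lem-Lemma-5-in-SK} controls $\|\D v_1\|_{L^\infty}$ in terms of $\|u_1\|_{L^\infty}$ (directly when $\delta=0$, via the heat-semigroup estimate \eqref{eq-L-p-estimate-of-v-by-u-2} when $\delta=1$). A Young-type splitting absorbs the resulting gradient factors into the non-positive diffusion term, and sending $\e\to0$ and then $l\to\infty$ yields
$$\frac{d}{dt}\int_{\R^n}|u_1-u_2|\,dx\le C(t)\bigl(\|u_1-u_2\|_{L^1(\R^n)}+\|v_1-v_2\|_{W^{1,p}(\R^n)}\bigr)$$
for a suitable exponent $p$.

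To close the loop I would apply Lemma~\ref{lem-Lemma-5-in-SK} to the linear problem satisfied by $v_1-v_2$ with source $u_1-u_2$ and vanishing initial datum: this bounds $\|v_1(t)-v_2(t)\|_{W^{1,p}(\R^n)}$ by $\int_0^t\|u_1-u_2\|_{L^{p'}}\,ds$ for compatible $p,p'$. Combining with the $u$-estimate above gives a closed integral inequality in $t\mapsto\|u_1(t)-u_2(t)\|_{L^1}$, which Gronwall's lemma then forces to vanish on $[0,T)$, so that $u_1\equiv u_2$ and hence $v_1\equiv v_2$.

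The principal obstacle is that the drift coefficient $u^{q-1}$ grows strictly faster than the diffusion coefficient $mu^{m-1}$ once $q>m+1$, so the Young absorption just sketched breaks down in that regime; this is precisely why the hypothesis splits into alternatives. In alternative (1) ($1<m<2$, and $m\ge2$ with $q\le m+1$) the additional $L^2(0,T;L^{2n/(n+2)}(\R^n))$-integrability of $u$ is what allows the Sobolev inequality \eqref{eq-first-sobolev-inequality-1} to swallow the drift contribution into $\|\D(u_1^m-u_2^m)\|_{L^2}$; in alternative (2) ($m\ge2$, $q>m+1$) the $L^{2q-m-1}(0,T;L^\infty)$ bound on $u$ is used instead to estimate the drift pairing directly, without Sobolev absorption. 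Reconciling both regimes within a single argument, together with the delicate passage $\e\to0$ on the level set $\{u_1^m=u_2^m\}$ where the H\"older modulus from Theorem~\ref{thm-Holder-estimates-for-solution-u} is indispensable, is the main technical hurdle I expect to spend most of the work on.
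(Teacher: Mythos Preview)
Your overall strategy---$L^1$-contraction via a Kruzhkov-type regularized sign test, absorption of the drift gradient into the good diffusion term by Young's inequality, closing with Lemma~\ref{lem-Lemma-5-in-SK} on $v_1-v_2$ and Gronwall---is exactly the paper's. Two points of comparison are worth noting.

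First, a technical divergence in how the drift is handled. You keep the gradient on the test function and estimate $(u_1^{q-1}-u_2^{q-1})\nabla v_1+u_2^{q-1}\nabla(v_1-v_2)$ pointwise, relying on Young to absorb the resulting $\mathrm{sgn}_\e'\,\nabla(u_1^m-u_2^m)$ pairing. The paper instead integrates by parts once more to move the divergence onto the drift, producing terms containing $\nabla u^{q-1}$, $\La v$ and $\La\hat v$ (their $II_1,II_2,II_3$). This trades one difficulty for another: the paper must now bound $\La\hat v$ in $L^\infty$, and \emph{this} is precisely where the H\"older regularity of Theorem~\ref{thm-Holder-estimates-for-solution-u} enters---it feeds into Schauder estimates for the heat equation (parabolic case) or the elliptic equation (when $\delta=0$) to make $\La v,\La\hat v$ continuous and bounded. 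So your identification of the role of H\"older continuity is off: it is not used for the passage $\e\to0$ on the level set $\{u_1^m=u_2^m\}$, but to justify the pointwise bound on $\La\hat v$ in the estimate of $II_3$. In your route the corresponding burden falls on controlling $\tfrac{1}{\e}\int_{\{|u_1^m-u_2^m|<\e\}}|u_1^{q-1}-u_2^{q-1}|^2|\nabla v_1|^2$, which is clean when $q\ge m+1$ (since $|u_1^{q-1}-u_2^{q-1}|\lesssim|u_1^m-u_2^m|$) but more delicate otherwise; be explicit about this if you pursue your variant.

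Second, a cosmetic difference: the paper uses a one-sided approximant $\eta_k$ of $\mathbf{1}_{\{s>0\}}$ rather than $\mathrm{sgn}_\e$, obtains an estimate on $(u_1-u_2)_+$, and then symmetrizes. Also, the paper does not fully absorb into the good term: a residual $C\e\int_0^{t}\|u\|_\infty^{q-m-1}\|\nabla u^m\|_{L^2}^2\,d\tau$ survives Gronwall and is killed by sending $\e\to0$ at the very end. This is a cleaner bookkeeping device than trying to eliminate the residual before Gronwall, and you may find it simplifies your argument.
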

\begin{proof}
Note that the proof of \textbf{Case 1} is very similar to the proof of Theorem 2.1 of \cite{SY}. Hence we only need to prove the \textbf{Case 2}. We will use a modification of the technique of \cite{SY} to prove the theorem. Since $(u,v)$ is a weak solution of \eqref{eq-cases-aligned-main-problem-of-Keller-Segel-System} on $[0,T)$, we have 
\begin{equation}\label{eq-equation-satisfied-by-weak-solutions-with-testt-functions-1}
\int_{t_1}^{t_2}\int_{\R^n}\left(\partial_{\tau}u(x,\tau)\cdot\phi(x,\tau)+\nabla u^m(x,\tau)\cdot\nabla\phi(x,\tau)-u^{q-1}\nabla v(x,\tau)\cdot\nabla\phi(x,\tau)\right)\,dxd\tau=0
\end{equation}
for all $0<t_1<t_2<T$ and for all $\phi\in L^2\left(0,T;H^1(\R^n)\right)\cap C\left((0,T);L^{\infty}(\R^n)\right)$ compactly supported in $\R^n$ for all $t\in[0,T)$ with $\phi(\cdot,0)\in L^{\infty}(\R^n)$.

Let $\eta\in C^1(\R)$ be such that $0\leq \eta(s)\leq 1$ for all $s\in\R$ and $0<\eta'(s)<2$ for all $s>0$ and 
\begin{equation*}
\eta(s)=\begin{cases}
                  \begin{array}{c}
                     0 \qquad \mbox{for $s\leq 0$}\\
                    1 \qquad \mbox{for $s\geq 1$}.
                    \end{array}
                 \end{cases}
\end{equation*}
Let $(u,v)$ and $(\hat{u},\hat{v})$ be  two weak solutions of \ref{eq-cases-aligned-main-problem-of-Keller-Segel-System} on $[0,T)$. We define $\eta_k(r)=\eta(kr)$ for all $r\in\R$ and all $n=1,2,\cdots$.

\begin{equation*}
\phi:=\eta_k\left(u^m-\hat{u}^m\right)\cdot\psi_l\in L^2(0,T;H^1(\R^n))\cap C\left((0,T);L^{\infty}\left(\R^n\right)\right)
\end{equation*}
with $\phi(\cdot,0)\in L^{\infty}(\R^n)$. It can be easily checked that $\supp\phi(\cdot,t)$ is compact in $\R^n$ for all $t\in[0,T)$. As the test function, we apply this $\phi$ to \eqref{eq-equation-satisfied-by-weak-solutions-with-testt-functions-1}. Then, for all $0<t_1<t_2<T$,
\begin{equation}\label{eq-splitting-quantity-into-two-I-and-II}
\int_{t_1}^{t_2}\int_{\R^n}\eta_k\left(u^m-\hat{u}^m\right)\psi_l\cdot\partial_{\tau}\left(u(x,\tau)-\hat{u}(x,\tau)\right)\,dxd\tau:=I+II
\end{equation}
where
\begin{equation*}
I:=-\int_{t_1}^{t_2}\int_{\R^n}\nabla\left(u^m-\hat{u}^m\right)\cdot\nabla\left(\eta_k\left(u^m-\hat{u}^m\right)\cdot\psi_l\right)\,dxd\tau,
\end{equation*}
and
\begin{equation*}
II:=\int_{t_1}^{t_2}\int_{\R^n}\left(u^{q-1}\nabla v-\hat{u}^{q-1}\nabla\hat{v}\right)\cdot\nabla\left(\eta_k\left(u^m-\hat{u}^m\right)\cdot\psi_l\right)\,dxd\tau.
\end{equation*}
By the chain rule,
\begin{equation*}
\begin{aligned}
I&=-\int_{t_1}^{t_2}\int_{\R^n}\left|\nabla\left(u^m-\hat{u}^m\right)\right|^2\cdot\eta_k'\left(u^m-\hat{u}^m\right)\cdot\psi_l\,dxd\tau\\
&\qquad \qquad \qquad -\int_{t_1}^{t_2}\int_{\R^n}\eta_k\left(u^m-\hat{u}^m\right)\nabla\left(u^m-\hat{u}^m\right)\cdot\nabla\psi_l\,dxd\tau.
\end{aligned}
\end{equation*}
Define a domain $D_k$ by
\begin{equation*}
D_k:=\left\{(x,t)\in\R^n\times(0,T):0<u^m(x,t)-\hat{u}^{m}(x,t)<\frac{1}{k}\right\}.
\end{equation*}
Then, by Integration by parts and Young's inequality,
\begin{equation*}
\begin{aligned}
I&=-\frac{3}{4}\int_{t_1}^{t_2}\int_{\R^n}\left|\nabla\left(u^m-\hat{u}^m\right)\right|^2\cdot\eta_k'\left(u^m-\hat{u}^m\right)\cdot\psi_l\,dxd\tau\\
&\qquad \qquad +\int_{t_1}^{t_2}\int_{D_k\cap\supp \psi_{l}}\left|u^m-\hat{u}^m\right|^2\eta_k'\left(u^m-\hat{u}^m\right)\cdot\frac{\left|\nabla\psi_l\right|^2}{\psi_l}\,dxd\tau\\
&\qquad \qquad \qquad \qquad + \int_{t_1}^{t_2}\int_{\R^n} \left(u^m-\hat{u}^m\right)\cdot \eta_k\left(u^m-\hat{u}^m\right)\cdot\La\psi_l\,dxd\tau.
\end{aligned}
\end{equation*}
Thus,
\begin{equation}\label{eq-control-of-I-by-some-quantities}
I\leq -\frac{3}{4}I_1+\frac{2c_1^2}{kl^2}\int_{t_1}^{t_2}\int_{D_k\cap\supp \psi_{l}}\,dxd\tau+\frac{c_2}{l^2}\int_{t_1}^{t_2}\int_{\R^n} \left(u^m+\hat{u}^m\right)\,dxd\tau
\end{equation}
where
\begin{equation*}
I_1:=\int_{t_1}^{t_2}\int_{\R^n}\left|\nabla\left(u^m-\hat{u}^m\right)\right|^2\cdot\eta_k'\left(u^m-\hat{u}^m\right)\cdot\psi_l\,dxd\tau.
\end{equation*}
We next pay our attention to $II$. By simple computations,
\begin{equation}\label{eq-aligned-control-of-II-by-some-quantities}
\begin{aligned}
II&=-\int_{t_1}^{t_2}\int_{\R^n}\nabla\left(u^{q-1}\nabla v-\hat{u}^{q-1}\nabla\hat{v}\right)\cdot\eta_k\left(u^m-\hat{u}^m\right)\cdot\psi_l\,dxd\tau\\
&=-\int_{t_1}^{t_2}\int_{\R^n}\left(\nabla u^{q-1}-\nabla \hat{u}^{q-1}\right)\cdot\nabla\hat{v}\cdot\eta_k\left(u^m-\hat{u}^m\right)\cdot\psi_l\,dxd\tau\\
&\qquad -\int_{t_1}^{t_2}\int_{\R^n}\nabla u^{q-1}\cdot\left(\nabla v-\nabla\hat{v}\right)\cdot\eta_k\left(u^m-\hat{u}^m\right)\cdot\psi_l\,dxd\tau\\
&\qquad -\int_{t_1}^{t_2}\int_{\R^n}\left(u^{q-1}\cdot\left(\La v-\La\hat{v}\right)+\left(u^{q-1}-\hat{u}^{q-1}\right)\La\hat v\right)\cdot\eta_k\left(u^m-\hat{u}^m\right)\cdot\psi_l\,dxd\tau\\
&=II_1+II_2+II_3.
\end{aligned}
\end{equation}
By an argument similar to the (4.16) of \cite{SY}
\begin{equation}\label{eq-aligned-control-of-II-1}
\begin{aligned}
II_1&\leq C\int_{t_1}^{t_2}\left(\left\|u(\tau)\right\|_{L^{\infty}}^{q-1}+\left\|\hat{u}(\tau)\right\|_{L^{\infty}}^{q-1}\right)\left\|\left(u-\hat{u}\right)\right\|_{L^{1}}\,d\tau\\
&\quad +\frac{I_1}{4}+\frac{C\chi_{m,q}}{k^{\frac{2(q-1)}{m}-1}}\int_{t_1}^{t_2}\left(\left\|\nabla\hat{v}_0\right\|_{L^{\infty}}^2+\left\|\hat{u}(\tau)\right\|_{L^{\frac{2n}{n+2}}}\right)\,d\tau \\
&\qquad +C\left(1-\chi_{m,q}\right)\int_{t_1}^{t_2}\left(\left\|v_0\right\|^{2q-m-1}_{L^{\infty}}+\left\|u(\tau)\right\|^{2q-m-1}_{L^{\infty}}+\left\|\hat{u}(\tau)\right\|^{2q-m-1}_{L^{\infty}}\right)\left\|(u-\hat{u})(\tau)\right\|_{L^1}\,d\tau
\end{aligned}
\end{equation}
for all $0<t_1<t_2<T$, where $C=C(n,M,m,q,\gamma)$.\\
\indent By Lemma \ref{lem-Lemma-5-in-SK}, it holds that
\begin{equation}\label{eq-aligned-control-of-II-2--1}
\begin{aligned}
II_2&\leq \frac{q-1}{m}\int_{t_1}^{t_2}\left\|u(\tau)\right\|_{L^{\infty}}^{q-m-1}\left\|\nabla u^{m}(\tau)\right\|_{L^2}\cdot\left\|\left(\nabla v-\nabla\hat{v}\right)(\tau)\right\|_{L^{2}}\,d\tau\\
&\leq C\int_{t_1}^{t_2}\left\|u(\tau)\right\|_{L^{\infty}}^{q-m-1}\left\|\nabla u^{m}(\tau)\right\|_{L^2}\cdot\left\|\left(u-\hat{u}\right)(\tau)\right\|_{L^{2}}\,d\tau
\end{aligned}
\end{equation}
for all $0<t_1<t_2<T$ with some positive constant where $C=C(p,n)$. Applying Young's inequality in \eqref{eq-aligned-control-of-II-2--1}, we get
\begin{equation}\label{eq-aligned-control-of-II-2}
\begin{aligned}
II_2\leq C\epsilon\int_{t_1}^{t_2}\left\|u(\tau)\right\|_{L^{\infty}}^{q-m-1}\left\|\nabla u^{m}(\tau)\right\|^2_{L^2}\,d\tau+\frac{C}{\epsilon}\int_{t_1}^{t_2}\left(\left\|u(\tau)\right\|_{L^{\infty}}^{q-m}+\left\|\hat{u}(\tau)\right\|_{L^{\infty}}^{q-m}\right)\left\|\left(u-\hat{u}\right)(\tau)\right\|_{L^{1}}\,d\tau
\end{aligned}
\end{equation}
for any $\epsilon>0$.\\
\indent By H\"older estimates (Theorem \ref{thm-Holder-estimates-for-solution-u}), $u, \hat{u}\in C^{\beta,\frac{\beta}{2}}(\R^n,(0,\infty))$ for some constant $0<\beta<1$. Then, by standard Schauder's estimates for the heat equation, $\La v$ and $\La\hat{v}$ are also H\"older continuous in space and time. Hence, by Lemma \ref{lem-Lemma-5-in-SK} and Lemma \ref{lem-cf-Thm-6-in-Wa}, we have
\begin{equation}\label{eq-aligned-control-of-II-3}
\begin{aligned}
II_3&\leq C\bigg(\int_{t_1}^{t_2}\left\|u(\tau)\right\|^{q-1}_{L^{\infty}}\left(\left\|(v-\hat{v})(\tau)\right\|_{L^{1}}+\left\|\left(u-\hat{u}\right)(\tau)\right\|_{L^{1}}\right)\,d\tau\\
&\qquad \qquad +\int_{t_1}^{t_2}\left(\left\|u(\tau)\right\|^{q-2}_{L^{\infty}}+\left\|\hat{u}(\tau)\right\|^{q-2}_{L^{\infty}}\right)\left(\left\|\hat{v}(\tau)\right\|_{L^{1}}+\left\|\hat{u}(\tau)\right\|_{L^1}\right)\left\|\left(u-\hat{u}\right)(\tau)\right\|_{L^{1}}\,d\tau\bigg)\\
&\leq C\left(\int_{t_1}^{t_2}\left(\left\|u(\tau)\right\|^{q-1}_{L^{\infty}}+\left\|\hat{u}(\tau)\right\|^{q-1}_{L^{\infty}}\right)\left\|\left(u-\hat{u}\right)(\tau)\right\|_{L^{1}}\,d\tau\right)
\end{aligned}
\end{equation}
where $C=C(n,q,\gamma)$.\\
\indent Substituting \eqref{eq-aligned-control-of-II-1}, \eqref{eq-aligned-control-of-II-2} and \eqref{eq-aligned-control-of-II-3} into \eqref{eq-aligned-control-of-II-by-some-quantities}, we have
\begin{equation}\label{eq-aligned-control-of-II-by-summations}
\begin{aligned}
II&\leq \frac{I_1}{4}+\frac{Ck\chi_{m,q}}{k^{\frac{2(q-1)}{m}}}\int_{t_1}^{t_2}\left(\left\|\nabla\hat{v}_0(\tau)\right\|_{L^{\infty}}^2+\left\|\hat{u}(\tau)\right\|^2_{L^{\frac{2n}{n+2}}}\right)\int_{D_k}\psi_l\,dxd\tau\\
&\qquad +C\left(1-\chi_{m,q}\right)\int_{t_1}^{t_2}\left(\left\|v_0\right\|^{2q-m-1}_{L^{\infty}}+\left\|u(\tau)\right\|^{2q-m-1}_{L^{\infty}}+\left\|\hat{u}(\tau)\right\|^{2q-m-1}_{L^{\infty}}\right)\left\|(u-\hat{u})(\tau)\right\|_{L^1}\,d\tau\\
&\qquad +\frac{C}{\epsilon}\int_{t_1}^{t_2}\left(\left\|u(\tau)\right\|_{L^{\infty}}^{q-m}+\left\|\hat{u}(\tau)\right\|_{L^{\infty}}^{q-m}\right)\left\|\left(u-\hat{u}\right)(\tau)\right\|_{L^{1}}\,d\tau\\
&\qquad +C\int_{t_1}^{t_2}\left(\left\|u(\tau)\right\|^{q-1}_{L^{\infty}}+\left\|\hat{u}(\tau)\right\|^{q-1}_{L^{\infty}}\right)\left\|\left(u-\hat{u}\right)(\tau)\right\|_{L^{1}}\,d\tau\\
&\qquad +C\epsilon\int_{t_1}^{t_2}\left\|u(\tau)\right\|_{L^{\infty}}^{q-m-1}\left\|\nabla u^{m}(\tau)\right\|^2_{L^2}\,d\tau
\end{aligned}
\end{equation}
for all $0<t_1<t_2<T$ where $C=C(p,n,m,q,\gamma)$.\\
\indent Putting \eqref{eq-control-of-I-by-some-quantities} and \eqref{eq-aligned-control-of-II-by-summations} into \eqref{eq-splitting-quantity-into-two-I-and-II},
\begin{equation}\label{eq-aligned-control-of-difference-and-test-functions-in-L-1-at-time-t}
\begin{aligned}
&\int_{t_1}^{t_2}\int_{\R^n}\eta_k\left(u^m-\hat{u}^m\right)\cdot\psi_l\cdot\partial_{\tau}\left(u(x,\tau)-\hat{u}(x,\tau)\right)\,dxd\tau\\
&\qquad \qquad \leq \frac{2c_1^2}{k^2l^2}\int_{t_1}^{t_2}\int_{D_k\cap\supp \psi_{l}}\,dxd\tau+\frac{c_2}{l^2}\int_{t_1}^{t_2}\int_{\R^n} \left(u^m+\hat{u}^m\right)\,dxd\tau\\
&\qquad \qquad \qquad +\frac{Ck\chi_{m,q}}{k^{\frac{2(q-1)}{m}}}\int_{t_1}^{t_2}\left(\left\|\nabla\hat{v}_0(\tau)\right\|_{L^{\infty}}^2+\left\|\hat{u}(\tau)\right\|^2_{L^{\frac{2n}{n+2}}}\right)\int_{D_k}\psi_l\,dxd\tau\\
&\qquad \qquad \qquad +C\left(1-\chi_{m,q}\right)\int_{t_1}^{t_2}\left(\left\|v_0\right\|^{2q-m-1}_{L^{\infty}}+\left\|u(\tau)\right\|^{2q-m-1}_{L^{\infty}}+\left\|\hat{u}(\tau)\right\|^{2q-m-1}_{L^{\infty}}\right)\left\|(u-\hat{u})(\tau)\right\|_{L^1}\,d\tau\\
&\qquad \qquad \qquad +\frac{C}{\epsilon}\int_{t_1}^{t_2}\left(\left\|u(\tau)\right\|_{L^{\infty}}^{q-m}+\left\|\hat{u}(\tau)\right\|_{L^{\infty}}^{q-m}\right)\left\|\left(u-\hat{u}\right)(\tau)\right\|_{L^{1}}\,d\tau\\
&\qquad \qquad \qquad +C\int_{t_1}^{t_2}\left(\left\|u(\tau)\right\|^{q-1}_{L^{\infty}}+\left\|\hat{u}(\tau)\right\|^{q-1}_{L^{\infty}}\right)\left\|\left(u-\hat{u}\right)(\tau)\right\|_{L^{1}}\,d\tau\\
&\qquad \qquad \qquad +C\epsilon\int_{t_1}^{t_2}\left\|u(\tau)\right\|_{L^{\infty}}^{q-m-1}\left\|\nabla u^{m}(\tau)\right\|^2_{L^2}\,d\tau
\end{aligned}
\end{equation}
By Lebesgue dominated convergence theorem, 
\begin{equation*}
\begin{aligned}
&\int_{t_1}^{t_2}\int_{\R^n}\eta_k\left(u^m-\hat{u}^m\right)\cdot\psi_l\cdot\partial_{\tau}\left(u(x,\tau)-\hat{u}(x,\tau)\right)\,dxd\tau\\
&\qquad \qquad \to \int_{t_1}^{t_2}\int_{\R^n}\psi_l\cdot\partial_{\tau}\left[u(x,\tau)-\hat{u}(x,\tau)\right]_+\,dxd\tau \qquad \qquad \mbox{as $k\to\infty$}.
\end{aligned}
\end{equation*}
Since $\frac{2(q-1)}{m}>1$, by letting $k\to\infty$ in \eqref{eq-aligned-control-of-difference-and-test-functions-in-L-1-at-time-t},
\begin{equation}\label{eq-aligned-control-of-difference-in-L-1-at-time-t}
\begin{aligned}
&\int_{\R^n}\psi_l\cdot\left[u(x,t_2)-\hat{u}(x,t_2)\right]_+\,dx\\
&\qquad \qquad \leq \int_{\R^n}\psi_l\cdot\left[u(x,t_1)-\hat{u}(x,t_1)\right]_+\,dx+\frac{c_2}{l^2}\int_{t_1}^{t_2}\int_{\R^n} \left(u^m+\hat{u}^m\right)\,dxd\tau\\
&\qquad \qquad\qquad +C\left(1-\chi_{m,q}\right)\int_{t_1}^{t_2}\left(\left\|v_0\right\|^{2q-m-1}_{L^{\infty}}+\left\|u(\tau)\right\|^{2q-m-1}_{L^{\infty}}+\left\|\hat{u}(\tau)\right\|^{2q-m-1}_{L^{\infty}}\right)\left\|(u-\hat{u})(\tau)\right\|_{L^1}\,d\tau\\
&\qquad \qquad \qquad +\frac{C}{\epsilon}\int_{t_1}^{t_2}\left(\left\|u(\tau)\right\|_{L^{\infty}}^{q-m}+\left\|\hat{u}(\tau)\right\|_{L^{\infty}}^{q-m}\right)\left\|\left(u-\hat{u}\right)(\tau)\right\|_{L^{1}}\,d\tau\\
&\qquad \qquad \qquad +C\int_{t_1}^{t_2}\left(\left\|u(\tau)\right\|^{q-1}_{L^{\infty}}+\left\|\hat{u}(\tau)\right\|^{q-1}_{L^{\infty}}\right)\left\|\left(u-\hat{u}\right)(\tau)\right\|_{L^{1}}\,d\tau\\
&\qquad \qquad \qquad +C\epsilon\int_{t_1}^{t_2}\left\|u(\tau)\right\|_{L^{\infty}}^{q-m-1}\left\|\nabla u^{m}(\tau)\right\|^2_{L^2}\,d\tau
\end{aligned}
\end{equation}
for all $0<t_0<t_1<T$.\\
\indent Letting $t_1\to 0$ and then $l\to\infty$ in \eqref{eq-aligned-control-of-difference-in-L-1-at-time-t}, we have by Fatou's lemma that
\begin{equation}\label{eq-aligned-control-of-difference-in-L-1-at-time-t-after-t-1-to-0}
\begin{aligned}
&\int_{\R^n}\left[u(x,t_2)-\hat{u}(x,t_2)\right]_+\,dx\\
&\qquad \leq C\left(1-\chi_{m,q}\right)\int_{0}^{t_2}\left(\left\|v_0\right\|^{2q-m-1}_{L^{\infty}}+\left\|u(\tau)\right\|^{2q-m-1}_{L^{\infty}}+\left\|\hat{u}(\tau)\right\|^{2q-m-1}_{L^{\infty}}\right)\left\|(u-\hat{u})(\tau)\right\|_{L^1}\,d\tau\\
&\qquad \qquad \qquad +\frac{C}{\epsilon}\int_{0}^{t_2}\left(\left\|u(\tau)\right\|_{L^{\infty}}^{q-m}+\left\|\hat{u}(\tau)\right\|_{L^{\infty}}^{q-m}\right)\left\|\left(u-\hat{u}\right)(\tau)\right\|_{L^{1}}\,d\tau\\
&\qquad \qquad \qquad +C\int_{0}^{t_2}\left(\left\|u(\tau)\right\|^{q-1}_{L^{\infty}}+\left\|\hat{u}(\tau)\right\|^{q-1}_{L^{\infty}}\right)\left\|\left(u-\hat{u}\right)(\tau)\right\|_{L^{1}}\,d\tau\\
&\qquad \qquad \qquad +C\epsilon\int_{0}^{t_2}\left\|u(\tau)\right\|_{L^{\infty}}^{q-m-1}\left\|\nabla u^{m}(\tau)\right\|^2_{L^2}\,d\tau
\end{aligned}
\end{equation}
for all $0<t_1<T$.\\
\indent By symmetry, we obtain that
\begin{equation}\label{eq-aligned-control-of-difference-with-reflection-in-L-1-at-time-t-after-t-1-to-0}
\begin{aligned}
&\int_{\R^n}\left[u(x,t_2)-\hat{u}(x,t_2)\right]_+\,dx\\
&\qquad \leq C\left(1-\chi_{m,q}\right)\int_{0}^{t_2}\left(\left\|\hat{v}_0\right\|^{2q-m-1}_{L^{\infty}}+\left\|u(\tau)\right\|^{2q-m-1}_{L^{\infty}}+\left\|\hat{u}(\tau)\right\|^{2q-m-1}_{L^{\infty}}\right)\left\|(u-\hat{u})(\tau)\right\|_{L^1}\,d\tau\\
&\qquad \qquad \qquad +\frac{C}{\epsilon}\int_{0}^{t_2}\left(\left\|u(\tau)\right\|_{L^{\infty}}^{q-m}+\left\|\hat{u}(\tau)\right\|_{L^{\infty}}^{q-m}\right)\left\|\left(u-\hat{u}\right)(\tau)\right\|_{L^{1}}\,d\tau\\
&\qquad \qquad \qquad +C\int_{0}^{t_2}\left(\left\|u(\tau)\right\|^{q-1}_{L^{\infty}}+\left\|\hat{u}(\tau)\right\|^{q-1}_{L^{\infty}}\right)\left\|\left(u-\hat{u}\right)(\tau)\right\|_{L^{1}}\,d\tau\\
&\qquad \qquad \qquad +C\epsilon\int_{0}^{t_2}\left\|\hat{u}(\tau)\right\|_{L^{\infty}}^{q-m-1}\left\|\nabla \hat{u}^{m}(\tau)\right\|^2_{L^2}\,d\tau
\end{aligned}
\end{equation}
for all $0<t_0<t_1<T$. Since
\begin{equation*}
\left|u(x,t_2)-\hat{u}(x,t_2)\right|=\left[u(x,t_2)-\hat{u}(x,t_2)\right]_++\left[\hat{u}(x,t_2)-u(x,t_2)\right]_+,
\end{equation*}
by \eqref{eq-aligned-control-of-difference-in-L-1-at-time-t-after-t-1-to-0} and \eqref{eq-aligned-control-of-difference-with-reflection-in-L-1-at-time-t-after-t-1-to-0}, we establish
\begin{equation*}
\left\|\left(u-\hat{u}\right)(t_2)\right\|_{L^1}\leq C\int_{0}^{t_2}\Big(\epsilon g_1(\tau)+g_2(\tau)\left\|\left(u-\hat{u}\right)(\tau)\right\|_{L^1}\Big)\,d\tau
\end{equation*}
for all $0<t_2<T$ where $C=C(n,m,q,\gamma)$ and
\begin{equation*}
g_1(\tau)=\left\|\hat{u}(\tau)\right\|_{L^{\infty}}^{q-m-1}\left\|\nabla \hat{u}^{m}(\tau)\right\|^2_{L^2}
\end{equation*}
and
\begin{equation*}
\begin{aligned}
g_2(\tau)&=\left\|u(\tau)\right\|^{q-1}_{L^{\infty}}+\left\|\hat{u}(\tau)\right\|^{q-1}_{L^{\infty}}+\frac{1}{\epsilon}\left(\left\|u(\tau)\right\|_{L^{\infty}}^{q-m}+\left\|\hat{u}(\tau)\right\|_{L^{\infty}}^{q-m}\right)\\
&\qquad \qquad +\left(1-\chi_{m,q}\right)\left(\left\|v_0\right\|^{2q-m-1}_{L^{\infty}}+\left\|\hat{v}_0\right\|^{2q-m-1}_{L^{\infty}}+\left\|u(\tau)\right\|^{2q-m-1}_{L^{\infty}}+\left\|\hat{u}(\tau)\right\|^{2q-m-1}_{L^{\infty}}\right).
\end{aligned}
\end{equation*}
By standard O.D.E theory, we obtain that
\begin{equation*}
\left\|\left(u-\hat{u}\right)(t_2)\right\|_{L^1}\leq C\epsilon e^{\int_{0}^{t_2}g_2(\tau_1)\,d\tau_1}\int_{0}^{t_2}\frac{g_1(\tau_2)}{e^{\int_{0}^{\tau_2}g_2(\tau_3)\,d\tau_3}}\,\tau_2 \qquad \forall 0<t_2<T.
\end{equation*}
Since $0<t_2<T$ and $\epsilon>0$ are arbitrary, we conclude that
\begin{equation*}
u(x,t)=\hat{u}(x,t) \qquad \forall x\in\R^n, \,\,0\leq t<T
\end{equation*}
and the theorem follows.
\end{proof}
  {\bf Acknowledgement} 
  The authors would like to thank  Prof.   Y. Sugiyama  for suggesting   the problem considered in this  paper.  
 Ki-Ahm Lee  was supported by the National Research Foundation of Korea(NRF) grant funded by the Korea government(MSIP) (No.2014R1A2A2A01004618). 
Ki-Ahm Lee also hold a joint appointment with the Research Institute of Mathematics of Seoul National University.

\end{document}